\newtheorem{defn}{Definition}[section]
\newtheorem{theo}[defn]{Theorem}
\newtheorem{lemma}[defn]{Lemma}
\newtheorem{prop}[defn]{Proposition}
\newtheorem{coro}[defn]{Corollary}
\theoremstyle{remark}
\newtheorem{remark}{Remark}[section]
\def\a{\alpha}
\def\b{\beta}
\def\s{\sigma}
\def\t{\tau}
\def\conmod#1#2{\equiv #1\ \hbox{$($mod\ }#2\hbox{$)$}}
\newcommand{\algclosure}{\overline{\mathbb{F}}_p}
\newcommand{\co}{{\mathcal O}}
\newcommand{\cp}{{\mathcal P}}
\newcommand{\gp}{\mathbb{P}}
\title{On the Classification of Exceptional 
Planar Functions over $\mathbb{F}_{p}$ }
\author{Fernando Hernando\footnote{Research of the first author supported by  Spain Ministry of Education
MTM2007-64704 and Bancaixa P1-1B2009-03} \\
Department of Mathematics\\
Universidad Jaume I\\
Spain \\
Gary McGuire\footnote{Research of the second author supported by the Claude
Shannon Institute, Science
Foundation Ireland Grant 06/MI/006.}\\
School of Mathematical Sciences\\
University College Dublin\\
Ireland\\
Francisco Monserrat$^*$ \\
Instituto Universitario de Matem\'{a}tica Pura y Aplicada\\
Universidad Polit\'{e}cnica de Valencia\\
Spain \\}
\begin{document}

\maketitle


\abstract{
We will present many strong partial  results towards a classification 
of exceptional planar/PN monomial functions on finite fields.
The  techniques  we use are
the Weil bound, Bezout's theorem, and Bertini's theorem.
 }

\bigskip

Keywords:
Absolutely irreducible polynomial,  planar function, perfect nonlinear function.


\section{Introduction}

We present  some new results on the classification 
of perfect nonlinear (PN) or planar  functions.  
These have connections to finite geometry,
coding theory and cryptography.

Let $p$ be a prime number, $t\geq 3$ an integer, and let $f_t(x,y)$ the polynomial 
\[
f_t(x,y):=(x+1)^t-x^t-(y+1)^t+y^t\in \mathbb{F}_p[x,y].
\]
Notice that $x-y$ divides $f_t(x,y)$.
We define another polynomial $g_t(x,y)$ by
\[
g_t(x,y):=\frac{(x+1)^t-x^t-(y+1)^t+y^t}{x-y} \in \mathbb{F}_p[x,y].
\]

In this paper we consider the following conjecture (see Section 2 for the background).

 
\bigskip

\noindent\textbf{Conjecture PN3:} {\em 
Suppose $t>2$.  The polynomial $g_t(x,y)$ has an absolutely irreducible factor defined over $\mathbb{F}_p$ for all $t$ not of the form $p^i+1$ (when $p\geq 3$) and 
$(3^i+1)/2$ (when $p=3$).}
\bigskip

Let $t=p^i\ell+r$, where $r$ is the remainder upon division of $t$ by $p$, and $i=\max \{ j :  p^j \mbox{ divides } t-r\}$. 
We will see later that we may assume that
$t$ is relatively prime to $p$, i.e., we may assume $r\neq 0$.

The results in this paper are the following.

\begin{theo}\label{maintheo}
The polynomial $g_t(x,y)$ has an absolutely irreducible factor defined over $\mathbb{F}_p$ in the following cases.
\begin{itemize}
\item[(A)] $t\not \equiv 1 \ mod \ p$.
\begin{itemize}
\item[(a)] Either $\gcd(p-1,t)\geq 3$ or $\gcd(p-1,t-1)\geq 2$.
\item[($\hat{a}$)] If there exists $m\in\mathbb{N}$ such that $g_t(x,y)$ does not factor over $\mathbb{F}_{p^m}$ and, furthermore, either $\gcd(p^m-1,t)\geq 3$ or $\gcd(p^m-1,t-1)\geq 2$.
\item[(b)] The number of singular points of the curve defined by $g_t(x,y)=0$ (over an algebraic closure of $\mathbb{F}_p$) is less than $\frac{(t-2)^2}{4}$.
\item[(c)] $t$ is even, $(s-1)^{t-1}\not\in \mathbb{F}_{p}$ and $(s-1)^{(t-1)(p-1)}\not=-1$ for all $s\not=1$ in the set of $(t-1)$-roots of unity in al algebraic closure of $\mathbb{F}_p$.

\item[(d)] $t$ is even and $t-1$ divides $p^{2e}+1$ for some positive integer $e$.

\item[(e)] $t-1\geq 3$ is a prime number such that the multiplicative order of $p$ in  $\mathbb{Z}/(t-1)\mathbb{Z}$ is $(t-2)/2$.

\item[(f)] If $e:=\gcd(e_{t-1},e_t)$, $e_n$ denoting the multiplicative order of $p$ in $\mathbb{Z}/n\mathbb{Z}$,
\begin{itemize}
\item[(1)] either there exists a divisor $d>2$ of $t$ such that $\gcd(e,e_d)=1$ or
\item[(2)] there exists a divisor $d>1$ of $t-1$ such that $\gcd(e,e_d)=1$.
\end{itemize}
\item[(g)] The polynomial $g_t(x,y)$ is irreducible over $\mathbb{F}_p$ and $gcd(e_d\mid d\in E)=1$, where $e_d$ is defined as in (f) and $E:=\{d\in \mathbb{N}\mid d>2 \mbox{ and} \mbox{ $d$ divides either $t$ or $t-1$}\}$.

\end{itemize}

\item[(B)]  $t\equiv 1 \ mod \ p$. 
\begin{enumerate}
\item[(B.1)] $gcd(\ell,p^i-1)< \ell$ and at least one of these conditions holds:
\begin{itemize}
\item $p\geq 5$, $i\geq 1$ and $\ell> 3$, 
\item $p\geq 5$, $i\geq 2$ and $\ell\geq 3$,  
\item $p=3$, $i\geq 2$ and $\ell\geq 3$,
\end{itemize}
\item[(B.2)]  $gcd(\ell,p^i-1)=\ell$ and $\ell<p^i-1$.
\end{enumerate}

\end{itemize}

\end{theo}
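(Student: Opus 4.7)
The statement is a list of sufficient conditions, so the argument is case-by-case, but each case is an instance of the following scheme. Let $C\subset\mathbb{P}^2$ be the projective plane curve obtained by homogenizing $g_t(x,y)$; it has degree $n=t-2$. Since an absolutely irreducible factor of $g_t$ defined over $\mathbb{F}_p$ corresponds to a Frobenius-fixed irreducible component of $C_{\overline{\mathbb{F}}_p}$, in every case one has to exhibit such a component, either directly or by an orbit-counting argument.

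For (A)(b) I would argue purely via Bezout: if $g_t=h_1h_2$ with $\deg h_i=d_i\ge 1$ over $\overline{\mathbb{F}}_p$, then $\{h_1=0\}\cap\{h_2=0\}$ consists of $d_1d_2\le n^2/4=(t-2)^2/4$ points (counted with multiplicity), each lying in the singular locus of $C$. The hypothesis of (b) thus precludes every nontrivial factorization, so $g_t$ is itself absolutely irreducible. Items (A)(a), $(\hat{a})$, (c), (d), (e) I would treat by exhibiting explicit low-degree components. The gcd conditions in (a) and $(\hat{a})$ provide nontrivial $t$-th or $(t-1)$-th roots of unity in $\mathbb{F}_p$ (resp.\ $\mathbb{F}_{p^m}$), which pull back to factors of $g_t$ along natural substitutions; tracking Frobenius on them yields an $\mathbb{F}_p$-defined factor. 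For (c), (d) with $t$ even, the hypotheses on $(s-1)^{t-1}$ and on $t-1\mid p^{2e}+1$ pin down the Frobenius action on the $(t-1)$-th roots of unity, and hence on the corresponding components of $C$. For (e), the primality of $t-1$ combined with the order hypothesis makes the cyclotomic factorization of $g_t$ transparent.

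Items (A)(f), (g) are orbit-length arguments. Since $e_n=[\mathbb{F}_p(\zeta_n):\mathbb{F}_p]$, the coprimality conditions $\gcd(e,e_d)=1$ force Frobenius orbits on certain $n$-indexed families of components to disagree in ways that can only be reconciled by the existence of at least one length-one orbit, i.e.\ an $\mathbb{F}_p$-rational component. For (B), where $t=p^i\ell+1$, I would use $(u+1)^t=(u+1)((u+1)^\ell)^{p^i}$ to rewrite $f_t$ modulo $p$, extract a $p^i$-th-power structure from $g_t$, and reduce its analysis to that of $g_\ell$ or a close variant. Case (B.2), $\ell\mid p^i-1$, then gives a clean reduction, while (B.1) requires combining this with the Bezout / singular-point bound of (b) and with the Weil bound on the number of $\mathbb{F}_{p^m}$-points of the reduced curve; the three subcases of (B.1) reflect where each of those estimates becomes sharp enough.

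I expect the main obstacles to lie in (B.1) and items (f), (g): there is no immediately visible low-degree component of $C$ in these cases, so the $\mathbb{F}_p$-rationality of a factor has to be forced indirectly from the arithmetic of Frobenius orbits together with sharp point counts. Elsewhere the strategy is cleaner, as Bezout controls the factorization type once the right family of components has been identified and Bertini's theorem ensures that the generic such component is absolutely irreducible.
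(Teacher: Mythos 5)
There is a genuine gap, and it sits at the heart of your central case (A)(b). You argue that a factorization $g_t=h_1h_2$ forces $d_1d_2\le (t-2)^2/4$ intersection points, all singular, and that the hypothesis "fewer than $(t-2)^2/4$ singular points'' therefore precludes the factorization. This gives no contradiction: Bezout says the intersection count \emph{equals} $d_1d_2$, so to contradict the bound on the number of singular points you need a \emph{lower} bound on $d_1d_2$ exceeding that number, and for an unbalanced factorization $d_1d_2$ can be as small as $t-3$. The missing ingredient is Lemma \ref{IfIrreducibleEqualDegreeFactors}: if $g_t$ has \emph{no} absolutely irreducible factor over $\mathbb{F}_p$, then each $\mathbb{F}_p$-irreducible factor $f_j$ splits into $n_j\ge 2$ Galois-conjugate absolutely irreducible factors \emph{of equal degree}, and this equidistribution is what forces (Lemma \ref{le:BoundDegree}) the sum of pairwise degree products to exceed $\deg(g_t)^2/4=(t-2)^2/4$, while nodality caps each intersection number at $1$. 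Note also that the correct conclusion is only that $g_t$ has an absolutely irreducible factor \emph{over $\mathbb{F}_p$}, not that $g_t$ is absolutely irreducible as you claim; the hypothesis cannot deliver the stronger statement.

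The same omission propagates elsewhere. For (a) and $(\hat a)$ the paper does not "pull back roots of unity to factors''; it uses them to produce a \emph{nonsingular} $\mathbb{F}_p$-rational point of $\chi_t$, and then Lemma \ref{le:exit2rationalPoints} (again resting on the conjugate-factor lemma) shows the unique component through a smooth rational point must be Frobenius-fixed. Cases (c), (d), (e) are not about Frobenius on components but are counting arguments that bound the number of singular points so as to reduce to (b) (or to contradict Proposition \ref{joselito}). Cases (f) and (g) require the pencil/Bertini machinery of Lemma \ref{lemagordo} to show every absolutely irreducible factor is, up to scalar, defined over $K^{(t)}\cap K^{(t-1)}$ --- your "orbit-length'' gloss presupposes exactly the rationality statement that is the hard content there. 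Finally, for case (B) there is no reduction of $g_t$ to $g_\ell$; the paper instead classifies the singular points into types with multiplicities $p^i+1$, $p^i$, $p^i-1$, bounds the intersection multiplicities type by type, and runs the same balanced-degree Bezout comparison. As written, your scheme would not close any of the cases without importing the equal-degree-conjugates lemma and the resulting lower bound on pairwise degree products.
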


The conditions of Case (A) in the above theorem satisfy the following implications:
$$(d)\; \vee\; (e)\; \Rightarrow \; (c)\; \Rightarrow (b).$$
 Conditions $(b)$ and $(c)$ concern an algebraic closure of the field $\mathbb{F}_p$. Conditions $(d)$ and $(e)$ are weaker but they involve only integer arithmetic.

The layout of this paper is as follows.
In Section 2 we give the background to the problem and explain the motivation
and origin of Conjecture PN3.
Section 3 gives a more detailed background of the results and
techniques we use.
In Section \ref{anasing} we analyze the singular points of the curve defined
by $g_t(x,y)=0$, and in Section \ref{anaintersect} we calculate or estimate
the intersection multiplicities of hypothetical factors at the singular points.
Section \ref{part2proof} proves the Case $(B)$ results under the assumption
that the curve is irreducible over $\mathbb{F}_p$.
Section  \ref{part3proof} removes this assumption using more careful analysis.
Finally Section \ref{caseAresults} proves our results on Case $(A)$.

\section{Background and Motivation}

In this section we present background to the problems under
consideration in this paper. 

\subsection{PN and Planar Functions}
Let $p$ be a prime number and let $q=p^n$.
Recall that any function $\mathbb{F}_q \longrightarrow \mathbb{F}_q$
can be expressed uniquely as a polynomial function (with 
coefficients in $\mathbb{F}_q$) of degree less than $q$.
A polynomial function is called a permutation polynomial (PP) if
it is a bijective function $\mathbb{F}_q \longrightarrow \mathbb{F}_q$.

\begin{defn}
A function $f : \mathbb{F}_q \longrightarrow \mathbb{F}_q$
is said to be \emph{planar} if the functions
$f(x+a)-f(x)$ are PPs for all nonzero $a\in \mathbb{F}_q$.
\end{defn}

Planar functions are used to construct finite projective planes,
and have been studied by finite geometers since at least 1968
(Dembowski and Ostrom \cite{DO}).
Note that planar functions cannot exist in characteristic 2,
because if $D_a(x):=f(x+a)-f(x)$ and $D_a(x)=b$ then
$D_a(x+a)=b$ also.

\begin{defn}
A function
$f:\mathbb{F}_{q}\longrightarrow \mathbb{F}_{q}$ is said to be PN
(Perfect Nonlinear) if for every $a,b\in\mathbb{F}_{q}$
with $a\neq 0$ we have
$$
\sharp\{x\in\mathbb{F}_{q}\mid f(x+a)-f(x)=b\}\leq 1.
$$
\end{defn}

PN functions were first defined in 1992 by Nyberg and Knudsen \cite{NK},
in a cryptography paper. 
Note that PN functions cannot exist in characteristic 2,
because if $x$ is a solution to  $f(x+a)-f(x)=b$ then $x+a$ is another solution.

It is clear that PN functions and planar functions are the same thing!
They have different origins; PN functions come from cryptography whereas
planar functions come from finite geometry.

We consider monomial functions in this article. 
Because $x^t$ is planar iff $x^{pt}$ is planar, we may assume that
$t$ is relatively prime to $p$.

The known planar monomials $f(x)=x^t$ are in the following table.

\begin{table}[!h]
\noindent\begin{center} 
\begin{tabular}{|c|c|c|c|c|} 
\hline 
  Characteristic& \footnotesize{Exponents $t$} & \footnotesize{Conditions} & \footnotesize{Proved by}\\ 
\hline 
\hline 
\footnotesize{odd} & \footnotesize{$2$} & \footnotesize{None} & \footnotesize{Classical}\\ 
  
\hline 
\footnotesize{odd}  & \footnotesize{$p^i+1$ }&  \footnotesize{$n/(i,n)$ odd}  &\footnotesize{Dembowski-Ostrom}\\ 
 
\hline 
\footnotesize{3} &\footnotesize{$(3^i+1)/2$}& \footnotesize{$(i,n)=1$, $i$ odd}& \footnotesize{Coulter-Matthews}\\ 
\hline 
\end{tabular} 
\end{center}
\caption{Known PN exponents  $t$ }\label{tablePN}
\end{table}

It is conjectured that this list is complete:

\bigskip
\textbf{Conjecture PN1:} {\em All planar functions of the form  $x^t$ 
are listed in Table 1.}

\vspace{.2cm}

In this article we present some partial results towards this conjecture.
We consider the classification of functions $x^t$ that are
planar/PN on $\mathbb{F}_{p^n}$ for infinitely many $n$.
The known examples in Table \ref{tablePN} all have this property.
Therefore, a weaker conjecture than Conjecture 1 is the following:

 \bigskip
\textbf{Conjecture PN2:} {\em If $x^t$ is a planar function 
on $\mathbb{F}_{p^n}$ for infinitely many $n$,
then $t$ is 
of the values listed in the table.}

\vspace{.2cm}

For monomial functions $f(x)=x^t$,  
it was shown in \cite{Coulter-Matthews} that 
$x^t$ is planar over $\mathbb{F}_q$ if and only if
$(x+1)^t - x^t$ is a PP over $\mathbb{F}_q$,
i.e., for monomial functions we only need consider the $a=1$ case of Definition 1.

\begin{defn}
A PP $f(x)\in \mathbb{F}_q [x]$ is called \emph{exceptional} if 
$f$ is a PP on infinitely many extension fields of $\mathbb{F}_q$.
\end{defn}

Exceptional PPs have been the subject of many papers, see \cite{GRZ} for example.
Their monodromy groups are of great interest and have been classified.

To prove Conjecture PN2,
we consider the function $f(x)=x^t$ on the base field $\mathbb{F}_p$,
and we would like to  prove that
$(x+1)^t - x^t$ is not an exceptional PP on $\mathbb{F}_p$
when $t$ is not one of the values listed.

Observe  that $(x+1)^t - x^t$ is not a PP over $\mathbb{F}_{p^n}$ 
if there exist $\mathbb{F}_{p^n}$-rational points $(x,y)$ on the curve
defined by
$$
f_t(x,y)=(x+1)^t-x^t-(y+1)^t+y^t
$$
with $x\not= y$.
It is obvious that $f_t(x,y)$ has $x-y$ as a factor. 
Therefore, we would like to know whether the curve defined by
$$
g_t(x,y)=\frac{(x+1)^t-x^t-(y+1)^t+y^t}{x-y}
$$
has rational points over $\mathbb{F}_{p^n}$ with $x\not= y$.
Note that $g_t(x,y)$ is defined over $\mathbb{F}_{p}$.

The following is easily proved using the Weil bound.

\begin{theo}\label{absirredpropPN}
If $g_t(x,y)$ has an absolutely irreducible factor defined over  $\mathbb{F}_p$
then $g_t(x,y)$ has
rational points  $(\alpha,\beta)\in (\mathbb{F}_{p^n})^2$ with
distinct coordinates for all $n$ sufficiently large.
\end{theo}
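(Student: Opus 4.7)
The plan is to apply the Weil bound to an absolutely irreducible factor of $g_t$ and then discard a bounded number of unwanted points. Let $h(x,y)\in\mathbb{F}_p[x,y]$ denote the absolutely irreducible factor given by hypothesis, of degree $d$, and let $\bar C\subset\mathbb{P}^2$ be its projective closure. First I would note that, because $\bar C$ is absolutely irreducible and defined over $\mathbb{F}_p$, the Hasse--Weil--Serre bound applied to the normalization of $\bar C$ yields
$$\bigl|N_n-(p^n+1)\bigr|\leq (d-1)(d-2)\,p^{n/2}+C,$$
where $N_n$ denotes the number of $\mathbb{F}_{p^n}$-rational points of $\bar C$ and $C$ depends only on $d$ (absorbing the discrepancy between $\bar C$ and its normalization at singular points and at the line at infinity). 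In particular $N_n\to\infty$ as $n\to\infty$.

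Next I would bound the rational points that must be discarded. There are at most $d$ points of $\bar C$ at infinity (by Bezout against the line at infinity), and at most $d$ affine points of $\bar C$ with $x=y$ (by Bezout against the line $x-y=0$), provided that $x-y$ is not itself a component of $\bar C$. To verify this last condition, I would write $f_t(x,y)=F(x)-F(y)$ with $F(u)=(u+1)^t-u^t$; then $g_t(x,x)=F'(x)=t\bigl[(x+1)^{t-1}-x^{t-1}\bigr]$, which is a nonzero polynomial whenever $p\nmid t$, a reduction already granted before the statement. Hence $x-y\nmid g_t$, and a fortiori $x-y\nmid h$. Combining, for every $n$ large enough that $N_n>2d$ there must exist an $\mathbb{F}_{p^n}$-rational affine point $(\alpha,\beta)\in\bar C$ with $\alpha\neq\beta$, and such a point automatically lies on $g_t=0$, as desired.

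The only mildly tricky point will be the choice of Weil-type bound, since the classical Hasse--Weil inequality is stated for smooth projective curves whereas $\bar C$ may be singular. I would handle this by passing to the normalization and observing that the number of $\mathbb{F}_{p^n}$-points of $\bar C$ differs from that of its normalization by at most a constant depending on $d$ alone: the number of singular points is bounded by $(d-1)(d-2)/2$ and each has a preimage of bounded size in terms of $d$. Alternatively one may invoke the Lang--Weil estimate for absolutely irreducible varieties of fixed degree. Either route makes the unboundedness of $N_n$ manifest, and the rest of the argument reduces to Bezout, which is why the claim is ``easily proved''.
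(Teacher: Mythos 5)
Your argument is correct and follows exactly the route the paper intends: the paper states only that the theorem ``is easily proved using the Weil bound'' and gives no further details, and your write-up supplies precisely those details (Weil/Lang--Weil for an absolutely irreducible curve over $\mathbb{F}_p$ via its normalization, then discarding the at most $d$ points at infinity and the at most $d$ points on $x=y$, justified by checking $g_t(x,x)=t\bigl[(x+1)^{t-1}-x^{t-1}\bigr]\neq 0$ when $p\nmid t$). Nothing is missing.
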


Based on the known examples of PN functions in Table 1, 
we make the following conjecture.

\bigskip

\noindent\textbf{Conjecture PN3:} {\em 
Suppose $t>2$.  The polynomial $g_t(x,y)$ has an absolutely irreducible factor defined over $\mathbb{F}_p$ for all $t$ not of the form $p^i+1$ (when $p\geq 3$) and 
$(3^i+1)/2$ (when $p=3$).}
\bigskip

Clearly 
Conjecture PN3 implies Conjecture PN2.
Therefore, the topic of this paper is proving Conjecture PN3.
We give some partial results, which are stated in the introduction; the full conjecture is still open.

\subsection{Small values of $p$ and $t$}

To demonstrate the power of our results, 
we have implemented MAGMA functions for testing the conditions given in Theorem \ref{maintheo} and, using them, we have proved Conjecture PN3 for 
a great many values $t\leq 1000$ when $p$ is either $3$, $5$ or $7$.
Condition $(\hat{a})$ has been checked only for $m=2$ and $m=3$ and, to implement Condition $(d)$, we have used the equivalent formulation given in Corollary \ref{newprop}. 
Condition $(b)$ is quite strong but it involves  computations with a variety which is computationally intensive and the MAGMA function that implements it does not finish; so we have not taken it into account for our tests.


Notice that the conditions concerning Case $(B)$ involve only integer arithmetic. 
However, in Case $(A)$, conditions $(a)$, $(d)$, $(e)$ and $(f)$ are purely arithmetical conditions relating $t$ and $p$ (integer arithmetic) and the other conditions involve computations concerning elements in the algebraic closure of $\mathbb{F}_p$. Taking this observation into account, we consider separately the following groups of conditions associated with Case $(A)$:

\begin{itemize}
\item Group 1: $(a)$, $(d)$, $(e)$ and $(f)$.
\item Group 2: Conditions of Group 1 and $(g)$.
\item Group 3: Conditions of Group 2 and $(c)$.
\item Group 4: Conditions of Group 3 and $(\hat{a})$ (taking $m=2$).
\item Group 5: Conditions of Group 3 and $(\hat{a})$ (taking $m=3$).
\end{itemize}

Note that each Group includes the previous Group (except 4 and 5).

Tables 2, 3 and 4 show, for $p=3,5,7$, the values of $t$, $3\leq t\leq 1000$, for which 
we are \emph{not} able to prove Conjecture PN3 using the conditions in each group. 
Notice that for $p\in \{5,7\}$ conditions in 
Group 5 cover every $t$ from $3$ to $1000$ corresponding to Case $(A)$.

Of course we omit the exceptional values of $t$ when doing these computations.

Putting all our results together, these computations show the following.

\begin{theo}
Conjecture PN3 is proved by Theorem \ref{maintheo} in the following cases.
\begin{enumerate}
\item $p=3$, all values of $t<1000$ except 758 and $t \conmod{4}{6}$
\item $p=5$, all values of $t<1000$ except $15$ and $76$
\item $p=7$, all values of $t<1000$ except $22$ and $148$
\end{enumerate}
\end{theo}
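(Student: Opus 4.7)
The plan is to verify this theorem by a computer search, since it is a purely quantitative summary of which $t$ in the range $3\le t\le 1000$ (for $p\in\{3,5,7\}$) are handled by some clause of Theorem \ref{maintheo}. First I would discard the exceptional exponents of Conjecture PN3, namely $t=p^i+1$ for all $p$ and $t=(3^i+1)/2$ when $p=3$, as well as all $t$ that are not coprime to $p$ (which by the remark in the introduction may be reduced to the coprime case). Each remaining $t$ falls into exactly one of Case $(A)$ ($t\not\equiv 1\bmod p$) or Case $(B)$ ($t\equiv 1\bmod p$), so the two cases can be treated by disjoint subroutines.

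For Case $(B)$ the conditions $(B.1)$ and $(B.2)$ are purely integer-arithmetic statements about $\gcd(\ell,p^i-1)$, where $t=p^i\ell+1$ with $\gcd(\ell,p)=1$; these are immediate to check, and together they cover every $t\equiv 1\bmod p$ in the range. For Case $(A)$, my strategy is to apply the clauses of Theorem \ref{maintheo} in order of increasing computational cost, following the Group~1 through Group~5 stratification defined in the excerpt. Group~1 consists of the arithmetic clauses $(a)$, $(d)$ (checked via the divisibility equivalence in Corollary \ref{newprop}), $(e)$, and $(f)$, each requiring only $\gcd$'s, multiplicative-order computations, and small divisor searches over $t$ and $t-1$. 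After removing the values cleared by Group~1, I would attempt clause $(g)$ on the survivors (this is Group~2), which requires an absolute-irreducibility test for $g_t(x,y)$ over $\mathbb{F}_p$; this can be done in MAGMA by a call to the bivariate factorisation routine. Group~3 adds clause $(c)$, which requires iterating over the $(t-1)$-st roots of unity in $\algclosure$ and testing two multiplicative conditions there; to keep this tractable I would work in the smallest extension of $\mathbb{F}_p$ containing these roots, whose degree is $e_{t-1}$. Finally Groups~4 and~5 add clause $(\hat a)$ for $m=2$ and $m=3$ respectively, i.e. a factorisation test for $g_t$ over $\mathbb{F}_{p^2}$ and $\mathbb{F}_{p^3}$, combined with the same $\gcd$ condition from $(a)$ but with $p^m$ in place of $p$.

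The main obstacle here is clause $(b)$, which would be very strong but requires computing the complete singular locus of the affine curve $g_t(x,y)=0$ over $\algclosure$; doing this by a primary decomposition of the Jacobian ideal is infeasible for $t$ of the sizes considered, so I would exclude $(b)$ from the automated run (as the authors note). The only other subtlety is making sure that clause $(f)$ is tested over all divisors $d$ of $t$ and $t-1$ in the relevant ranges and that the multiplicative orders $e_d$ are computed correctly when $\gcd(d,p)>1$ (in which case $e_d$ is simply not defined and the corresponding $d$ is skipped). Having run these five Groups in succession on every admissible $t<1000$ for $p\in\{3,5,7\}$, the theorem is obtained by tabulating the values that survive all groups; the residual sets are precisely the exceptional lists in the statement, namely $\{758\}\cup\{t<1000:t\equiv 4\bmod 6\}$ for $p=3$, $\{15,76\}$ for $p=5$, and $\{22,148\}$ for $p=7$.
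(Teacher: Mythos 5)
Your proposal is essentially the paper's own proof: the authors likewise verify the statement by implementing the clauses of Theorem \ref{maintheo} in MAGMA, stratified into the same Groups 1--5, using Corollary \ref{newprop} for clause $(d)$, restricting $(\hat a)$ to $m=2,3$, omitting clause $(b)$ as computationally infeasible, and reading the exceptional lists off the resulting tables. One small slip: your claim that $(B.1)$ and $(B.2)$ ``together cover every $t\equiv 1 \bmod p$ in the range'' contradicts both the paper's Table of Case $(B)$ exclusions and your own final residual sets (the values $t\equiv 4 \bmod 6$ for $p=3$, and $76$, $148$, are exactly Case $(B)$ values left uncovered), but since your procedure tabulates survivors rather than relying on that claim, the conclusion is unaffected.
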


\begin{table}[!h]
\noindent\begin{center} 
\begin{tabular}{|c|c|} 
\hline 
Conditions &  Excluded values of $t$ for $p=3$\\ \hline 
\hline 
$(B)$ &  118 values of the form $4+6k$  \\
\hline 
$(A)$: Group 1 &  79 values \\ 
\hline 
$(A)$: Group 2 &  69 values\\ 
\hline 
$(A)$: Group 3 &    482, 758\\ 
\hline 
$(A)$: Group 4 &    482, 758\\
\hline 
$(A)$: Group 5 &    758\\
\hline 
\end{tabular} 
\end{center}
\caption{Not included values in Theorem 1.2 for $p=3$}
\end{table}

\begin{table}[!h]
\noindent\begin{center} 
\begin{tabular}{|c|c|} 
\hline 
Conditions  in Th 1.2 &  Excluded values of $t$ for $p=5$\\ \hline 
\hline 
$(B)$ &  16,  76  \\
\hline 
$(A)$: Group 1 &  34 values\\
\hline
$(A)$: Group 2 &  24 values\\
\hline 
$(A)$: Group 3 & 82, 218, 274, 322, 334, 442, 
 658,  898\\
\hline 
$(A)$: Group 4 &  218\\
\hline 
$(A)$: Group 5 &  \\
\hline 
\end{tabular} 
\end{center}
\caption{Not included values in Theorem 1.2 for $p=5$}
\end{table}

\begin{table}[!h]
\noindent\begin{center} 
\begin{tabular}{|c|c|c|} 
\hline 
Conditions  in Th 1.2 &  Excluded values for $p=7$ \\ 
 \hline 
\hline 
$(B)$  &  22,  148  \\ 
\hline 
$(A)$: Group 1 &  54 values\\ 
\hline 
$(A)$: Group 2 &  41 values\\ 
\hline 
$(A)$: Group 3 &  362, 818\\ 
\hline 
$(A)$: Group 4  & 362, 818\\
\hline 
$(A)$: Group 5  & \\
\hline
\end{tabular} 
\end{center}
\caption{Not included values in Theorem 1.2 for $p=7$ }
\end{table}

We note that similar results involving Case $(B)$ have also been achieved independently by Robert Coulter \cite{C}. Moreover a preprint has been posted on the arxiv
by Elodie Leducq \cite{EL} solving Case $(B)$.

\section{Detailed Background Results}

We wish to prove Conjecture PN3.
The idea is to show that Bezout's theorem cannot possibly hold, when applied to two
(or more) putative factors of the polynomial $g_t$.
This proof depends heavily on analyzing the singular points of the curve $g_t$.
  
\subsection{Background on curves}

Let $\overline{\mathbb{F}}_p$ be an algebraic closure of $\mathbb{F}_p$.
A polynomial $h(x,y)\in \overline{\mathbb{F}}_p[x,y]$ defines an affine plane curve
$$C_h:=\{(\a,\b)\in \overline{\mathbb{F}}_p^2\mid h(\a, \b)=0\}.$$
Given a point $P=(\a,\b)\in \algclosure^2$ we write
$$h(x+\a,y+\b)=H_0(x,y)+H_1(x,y)+H_2(x,y)+\cdots,$$
where each $H_i(x,y)$ is either 0 or a homogeneous polynomial of degree $i$.  
\begin{defn}
The multiplicity of $h$ at $P$ is the smallest $m$ with $H_m\not=0$,
and is denoted by $m_P(h)$ or $m_P(C_h)$.
\end{defn}
In particular, $P\in C_h$ if and only if $m_P(h)\ge 1$.

We say  that $P$ is a {\it singular point of $h$} (or {\it of $C_h$}) if  $m_P(h)\ge 2$.
The linear factors of $H_m$ are the tangent lines 
to the curve $C_h$ at the point $P$.
The collection of tangent lines is called the {\it tangent cone}.

We consider the projective plane $\mathbb{P}^2$ over $\overline{\mathbb{F}}_p$ and take homogeneous coordinates $(X:Y:Z)$ such that $x:=X/Z$ and $y:=Y/Z$ are affine coordinates in the chart defined by $Z\not=0$. Let $F_t(X,Y,Z)$ (resp., $G_t(X,Y,Z)$) be the homogenization of the polynomial $f_t(x,y)$ (resp., $g_t(x,y)$) and denote by 
 $\chi_t$ the projective curve over $\overline{\mathbb{F}}_p$ defined by the equation $G_t(X,Y,Z)=0$. Notice that $g_t(x,y)$ has an absolutely irreducible factor over $\mathbb{F}_p$ if and only if $G_t(X,Y,Z)$ does so.

\subsection{Background on intersection multiplicity}

Bezout's theorem  is a classical result in algebraic geometry and appears frequently in the literature (see for example chapter 5 of \cite{F}).

\bigskip

{\textbf{Bezout's Theorem}:} Let $r$ and $s$ be two
projective plane curves of degrees $D_1$ and $D_2$ over an algebraically closed 
field $k$  having no components in common. Then,
\begin{equation}\label{Bezout}
\sum_{P}I(P,r,s)=D_1D_2.
\end{equation}
The sum runs over all points $P$ in the projective plane $\mathbb{P}^2(k)$, 
and by $I(P,r,s)$ we mean
the intersection multiplicity of the curves $r$ and $s$ at the
point $P$. Notice that if $r$ or
$s$ does not go through $P$, then $I(P,r,s)=0$. Therefore, the sum
in (\ref{Bezout}) runs over the singular points of the product $rs$.
In our case, the sum will run over the singular points of $g_t$.

Using properties $I(P,r_1r_2,s)=I(P,r_1,s)+I(P,r_2,s)$ and
$\deg (r_1r_2)=\deg (r_1) +\deg (r_2)$ one can generalize
Bezout's Theorem to several curves $f_1$, $f_2$, $\cdots$ ,$f_r$
as follows:
\begin{equation}\label{bezoutseveral}
\sum_P \sum_{1\leq i<j\leq r}  I(P,f_j,f_j)= \sum_{1\leq i<j\leq r }\deg (f_j)\deg (f_j).
\end{equation}

We refer the reader to chapter 5 of \cite{F} for the definition of 
the intersection multiplicity $I(P,r,s)$ of two curves $r,s$ at a point $P$.
The following property of the intersection multiplicity will be useful for us.
It is part of the definition of intersection multiplicity in \cite{F}.
We state it as a Corollary.

\begin{coro}\label{DifferentTangentCone}
\begin{equation}\label{IntersectionMultiplicity}
I(P,r,s)\geq m_P(r)m_P(s),
\end{equation}
and  equality holds if and only if the tangent cones of $r$ and $s$ do not share any linear factor.
\end{coro}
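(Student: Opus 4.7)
The plan is to work from the algebraic definition of intersection multiplicity, namely $I(P, r, s) = \dim_k \mathcal{O}_{P, \mathbb{A}^2}/(r, s)$, where $\mathcal{O}_{P, \mathbb{A}^2}$ is the local ring at $P$ with maximal ideal $\mathfrak{m}$, and $k = \overline{\mathbb{F}}_p$. After an affine change of coordinates (which preserves both $m_P$ and $I(P, \cdot, \cdot)$) I may assume $P = (0, 0)$, and then write $r = R_m + r'$ and $s = S_n + s'$ where $m = m_P(r)$, $n = m_P(s)$, the tangent cones $R_m, S_n$ are homogeneous of degrees $m, n$, and $r' \in \mathfrak{m}^{m+1}$, $s' \in \mathfrak{m}^{n+1}$.

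To prove the inequality $I(P, r, s) \geq mn$, I would analyze the $\mathfrak{m}$-adic filtration on $\mathcal{O}/(r, s)$. The associated graded ring of $\mathcal{O}$ is $k[x, y]$, and the initial forms of $r$ and $s$ in this graded ring are exactly the tangent cones $R_m$ and $S_n$. When $R_m$ and $S_n$ share no common linear factor they are coprime in the UFD $k[x, y]$, hence form a regular sequence; a direct Koszul computation (or the Hilbert series $(1-T^m)(1-T^n)/(1-T)^2$, whose coefficients sum to $mn$) yields $\dim_k k[x, y]/(R_m, S_n) = mn$. A short filtration argument then transfers this count to $\mathcal{O}/(r, s)$, giving $\dim_k \mathcal{O}/(r, s) = mn$ and hence equality. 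Conversely, if $R_m$ and $S_n$ share a common linear factor $L$, then the line $\{L = 0\}$ lies in the zero locus of the tangent-cone ideal, so $k[x, y]/(R_m, S_n)$ is infinite-dimensional; examining each graded piece of $\mathcal{O}/(r, s)$ then shows that the extra tangent contributes at least one additional dimension beyond $mn$, giving the strict inequality.

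The main obstacle is precisely this last transfer, from the graded model $k[x, y]/(R_m, S_n)$ back to the local quotient $\mathcal{O}/(r, s)$: the higher-order perturbations $r'$ and $s'$ must be shown not to change the $k$-dimension when the tangent cones are coprime. The standard tool is the Artin--Rees lemma, or equivalently a direct inductive argument comparing Hilbert functions; one needs to verify that $\mathfrak{m}^N \subseteq (r, s)$ for $N$ large and that the graded pieces $\mathfrak{m}^t/(\mathfrak{m}^{t+1} + (r, s) \cap \mathfrak{m}^t)$ match those of the graded model in each degree. As the authors observe, this entire statement is actually part of the axiomatic characterization of intersection multiplicity given in Fulton \cite{F}, so in the present paper one would simply invoke that reference rather than reproduce the argument.
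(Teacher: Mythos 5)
The paper does not actually prove this statement: it is quoted verbatim as one of the standard properties of the intersection number and attributed to Fulton \cite{F} ("It is part of the definition of intersection multiplicity in \cite{F}"), so your reconstruction from the local-ring definition $I(P,r,s)=\dim_k\mathcal{O}_{P,\mathbb{A}^2}/(r,s)$ is genuinely a different (and more self-contained) route. Your outline of the inequality and of the equality case when the tangent cones are coprime is the standard associated-graded argument and is sound: coprimality of $R_m,S_n$ makes them a regular sequence, the Hilbert series gives $\dim_k k[x,y]/(R_m,S_n)=mn$, and the filtration/Artin--Rees step (which you correctly identify as the technical core) transfers this to $\mathcal{O}/(r,s)$.

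The one place where your sketch has a real logical jump is the converse direction. From the fact that $R_m$ and $S_n$ share a linear factor you conclude that $k[x,y]/(R_m,S_n)$ is infinite-dimensional and assert that this forces $I(P,r,s)>mn$. But the associated graded of $\mathcal{O}/(r,s)$ is $k[x,y]/\mathrm{in}(I)$ where $\mathrm{in}(I)$ is the ideal of \emph{all} initial forms of elements of $I=(r,s)$, and this ideal is in general strictly larger than $(R_m,S_n)$: the higher-order tails $r',s'$ produce elements such as $S'r-R's$ (where $R_m=LR'$, $S_n=LS'$) whose initial forms are not in $(R_m,S_n)$. Consequently the infinite-dimensionality of $k[x,y]/(R_m,S_n)$ gives only $I(P,r,s)\leq\infty$, not a lower bound. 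To get strictness one must control the graded pieces in degrees below $m+n$, e.g.\ by checking that $\mathrm{in}(I)_{m+n-1}=(R_m,S_n)_{m+n-1}$ has positive codimension in $k[x,y]_{m+n-1}$ when a common tangent exists; this is precisely the dimension count that Fulton's exact-sequence proof (comparing $\mathcal{O}/((r,s)+\mathfrak{m}^{m+n})$ with truncated polynomial rings) carries out. Since you ultimately propose to cite \cite{F} anyway, as the paper does, this does not invalidate your plan, but as a standalone proof the strict-inequality step would need that additional argument spelled out.
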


We note that the degree of $g_t$ is $t-1$.
Therefore, if $g_t=uv$ then our strategy is to show that 
$\sum_{P}I(P,u,v) < (\deg u)(\deg v)$
by analyzing the singular points $P$.
We usually lower bound the product of the degrees, and upper
bound the sum of intersection multiplicities, and show that
the upper bound is strictly less than the lower bound,
to obtain our contradiction.

\subsection{Background on  pencils and clusters of base points}\label{backbertini}

Let $\gp^2$ be the projective plane over $\algclosure$. Given a positive integer $d$, the projectivization of a vector subspace $\cp$ of $H^0(\gp^2,\co_{\gp^2}(d))$ with (projective) dimension 1 is called a \emph{pencil} on $\gp^2$, where $\co_{\gp^2}$ denotes the structure sheaf \cite[II.2]{Hartshorne} of $\gp^2$ as an algebraic variety over $\overline{\mathbb{F}}_p$.

Let us fix projective coordinates $(X:Y:Z)$ on $\gp^2$. Using them, such a pencil can be seen as the space of projective curves with equations $\alpha F(X,Y,Z)+\beta G(X,Y,Z)=0$, where $F(X,Y,Z)$ and $G(X,Y,Z)$ are fixed homogeneous polynomials of $\algclosure[X,Y,Z]$ of degree $d$ and $(\alpha:\beta)$ varies along $\gp^1$, the projective line over $\algclosure$. The pencil generated by polynomials $F$ and $G$ will be denoted by $\cp(F,G)$. Also, from now on, we shall assume that all considered pencils have no fixed components, that is, there is no curve that is component of all the curves of the pencil. A \emph{base point} of a pencil $\cp$ is a (closed) point $P\in \gp^2$ that belongs to all the curves of $\cp$.

Let us consider any sequence of morphisms
\begin{equation}
\label{seq} X_{n+1} \mathop  {\longrightarrow} \limits^{\pi _{n} }
X_{n} \mathop {\longrightarrow} \limits^{\pi _{n-1} }  \cdots
\mathop {\longrightarrow} \limits^{\pi _2 } X_2 \mathop
{\longrightarrow} \limits^{\pi _1 } X_1 : = \gp^2,
\end{equation}
where $\pi_i$ is the blow-up of $X_i$ at a closed point $p_i\in
X_i$, $1\leq i\leq n$. The associated set of closed points ${\mathcal C} =
\{p_1,p_2,\ldots,p_n\}$ will be called a {\it cluster} (of infinitely near points) over
$\gp^2$. For each point $p \in {\mathcal C}$, set $\tilde{E}_p$ (resp.,
$E^*_p$) the strict (resp., total) transform on $X_{n+1}$ of the
exceptional divisor created by the blowing-up at $p$. For any curve $C$ on $\gp^2$ it holds that
\begin{equation}\label{a}
C^*=\tilde{C}+\sum_{p\in BP(\cp)} m_p(C) E_p^*,
\end{equation}
where $C^*$ (resp., $\tilde{C}$) denotes the total (resp., strict)
transform of $C$ on $Z_{\cp}$ and $m_p(C)$ is the multiplicity
at $p$ of the strict transform of $C$ on the surface to which $p$
belongs. See, for instance, \cite[IV.2]{Eisenbud}, for the definition and properties of the blow-up, strict transforms and total transforms.

Given a pencil $\cp(F,G)$, the quotient $F/G$ gives rise to a rational map $f_{\cp}: X
\cdots \rightarrow \gp^1$ that is independent from the chosen basis $\{F,G\}$
up to composition with an automorphism of $\gp^1$. The closures
of the fibers of $f_{\cp}$ are exactly the curves of the pencil
$\cp$. Moreover, there exists a minimal composition
of blow-ups $\pi_{\cp}: Z_{\cp} \longrightarrow X$ (as in (\ref{seq}))
eliminating the indeterminacies of
the rational map $f_{\cp}$, that is,
the map $h_{\cp}:= f_{\cp} \circ
\pi_{\cp}:Z_{\cp}\rightarrow \gp^1$ is a morphism \cite[II.6]{Beauville}. 
The set of centers of the blow-ups giving rise to $\pi_{\cp}$, that we denote by $BP(\cp)$, is called the {\it cluster of base points} of
$\cp$.


Set $(x:=\frac{X}{Z}, y:=\frac{Y}{Z})$ affine coordinates in the affine chart defined by $Z\not=0$. Let us consider two homogeneous polynomials of the same degree $P(X,Y,Z),Q(X,Y,Z)\in \overline{\mathbb F}_p[X,Y,Z]$ (and without common components) and the associated pencil $\cp={\mathcal P}(P,Q)$. It is said that this pencil is \emph{composite} if there exists a rational function of $\gp^1$, $r=\frac{R_1(X,Y)}{R_2(X,Y)}$ ($R_1$ and $R_2$ being homogeneous polynomials of the same degree $\geq 2$) and a rational function $g$ of $\gp^2$ such that $f_{\cp}=r\circ g$. Also, ${\mathcal P}(P,Q)$ is \emph{irreducible} if all but finitely many curves of ${\mathcal P}(P,Q)$ are (absolutely) irreducible curves. A classical theorem of Bertini (valid in characteristic 0) that characterizes reducible linear systems was generalized for positive characteristic (see, for instance, either \cite{kleiman} and references therein, or \cite[Th. 7.19]{Iitaka}, or \cite[Th. 2.2]{Bodin}) giving rise, in the particular case of pencils (which are linear systems of dimension 1), to the following result:

\begin{theo}\label{bertini}

Let $P(X,Y,Z)$ and $Q(X,Y,Z)$ as above. If the pencil ${\mathcal P}(P,Q)$ is not composite, then ${\mathcal P}(P,Q)$ is irreducible.

\end{theo}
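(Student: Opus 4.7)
The plan is to prove Theorem~\ref{bertini} by reducing non-compositeness of the pencil to a purely field-theoretic condition on the generic fiber, then spreading geometric irreducibility of that generic fiber to almost all closed fibers via a constructibility argument.

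First I would translate ``composite'' into field-theoretic language. Set $K := \algclosure(P/Q)$ and $L := \algclosure(x,y)$, so that the inclusion $K \hookrightarrow L$ corresponds to $f_{\cp}$. I claim that $\cp(P,Q)$ is composite if and only if $K$ is not algebraically closed in $L$. For the forward direction, a decomposition $f_{\cp} = r \circ g$ with $\deg r \geq 2$ expresses $P/Q$ as a non-trivial rational function of $g$, so $g \in L$ is algebraic over $K$ of degree $\geq 2$ and hence $g \notin K$. Conversely, if the relative algebraic closure $K'$ of $K$ in $L$ strictly contains $K$, then $\operatorname{trdeg}_{\algclosure} K' = 1$ and $K' \subsetneq L$; since $K'$ is a subfield of $L$ of transcendence degree one over $\algclosure$, L\"uroth's theorem yields $K' = \algclosure(h)$ for some $h \in L$, and writing $P/Q$ as a rational function of $h$ of degree $\geq 2$ furnishes the desired composition.

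Second, I would connect this to the generic fiber. The generic fiber $X_\eta$ of $h_{\cp} : Z_{\cp} \to \gp^1$, viewed as a scheme over $K$ with function field $L$, is geometrically irreducible over $K$ if and only if $K$ is algebraically closed in $L$ (a standard criterion, cf.\ EGA IV.4.5.9). Combining this with the previous step, non-compositeness of $\cp$ is equivalent to geometric irreducibility of $X_\eta$. From here I would invoke constructibility (e.g.\ EGA IV.9.7.7) of the locus in $\gp^1$ over which the fiber is geometrically irreducible: since this locus contains the generic point, it is open and dense, and therefore all but finitely many closed fibers of $h_{\cp}$ are geometrically irreducible. Finally, for all but finitely many $t = (\alpha : \beta) \in \gp^1$, the closed fiber $h_{\cp}^{-1}(t)$ agrees with the strict transform of the curve $C_t : \beta P - \alpha Q = 0$ up to a fixed exceptional divisor structure independent of $t$, and so geometric irreducibility of the fiber is equivalent to absolute irreducibility of $C_t$ as a plane curve.

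The main obstacle will be the inseparability issues specific to characteristic $p$: even when $X_\eta$ is geometrically irreducible, it may fail to be geometrically reduced, and one must verify that this subtlety does not affect the irreducibility statement for the concrete polynomials $\beta P - \alpha Q$ at closed points of $\gp^1$. A secondary technical point is tracking how base-point resolution distributes exceptional components among fibers of $h_{\cp}$, which is needed to identify strict transforms of pencil curves with the ``horizontal'' parts of closed fibers up to a fixed vertical contribution that does not interfere with irreducibility.
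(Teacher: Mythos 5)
First, a point of comparison: the paper does not actually prove Theorem~\ref{bertini}. It is quoted as a known positive-characteristic generalization of Bertini's second theorem, with the proof delegated to the cited references (Kleiman's survey, Iitaka, and --- in the form closest to your argument --- Bodin's Theorem~2.2). Your outline is essentially the standard scheme-theoretic proof found there: translate non-compositeness into the statement that $K=\overline{\mathbb{F}}_p(P/Q)$ is relatively algebraically closed in $L=\overline{\mathbb{F}}_p(x,y)$ via L\"uroth, deduce a property of the generic fibre, and spread it to almost all closed fibres by constructibility. The L\"uroth step, the constructibility step, and the identification of closed fibres with strict transforms of pencil members are all sound; in fact you can avoid the blow-up bookkeeping entirely by working with the incidence variety $\{\beta P-\alpha Q=0\}\subseteq\mathbb{P}^2\times\mathbb{P}^1$, whose fibre over $(\alpha:\beta)$ is literally the corresponding member of the pencil.

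There is, however, a genuine gap, and it sits exactly where you flagged ``the main obstacle'' without resolving it. Two distinct problems occur. (1) The criterion you quote is wrong as stated: $X_\eta$ is geometrically irreducible over $K$ if and only if $K$ is \emph{separably} algebraically closed in $L$ (EGA IV.4.5.9), not algebraically closed; so your claimed equivalence ``non-composite $\Leftrightarrow$ $X_\eta$ geometrically irreducible'' fails (take $P=R_1^p$, $Q=R_2^p$: the pencil is composite, yet every member has irreducible support). Fortunately the one implication your argument needs, namely ``$K$ algebraically closed in $L$ $\Rightarrow$ $X_\eta$ geometrically irreducible,'' survives. (2) More seriously, geometric irreducibility of almost all fibres controls only the supports of the members $\beta P-\alpha Q$; it does not exclude that infinitely many members are powers $H^k$, $k\geq 2$, of an irreducible polynomial, which would still make the pencil reducible in the sense the paper uses (and needs in Lemma~\ref{lemagordo}). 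To close this you must upgrade to geometric \emph{integrality}: if the pencil is not composite then in particular $P/Q\notin L^p=\overline{\mathbb{F}}_p(x^p,y^p)$ (otherwise $f_{\cp}=r\circ g$ with $r(T)=T^p$), hence by Mac Lane's criterion $L/K$ is separable, the generic fibre is geometrically reduced as well as geometrically irreducible, and the constructible locus of geometrically integral fibres (also covered by EGA IV.9.7.7) contains the generic point. Only then do you conclude that all but finitely many $\beta P-\alpha Q$ are absolutely irreducible \emph{polynomials}. With these two repairs your plan becomes a complete proof, matching the argument in the reference the paper actually relies on.
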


The following corollary will be used in the proof of Lemma \ref{lemagordo}.

\begin{coro}\label{bbb}

Let $P(X,Y,Z)$ and $Q(X,Y,Z)$ as above and let $p$ be a base point of the pencil $\cp={\mathcal P}(P,Q)$ such that the multiplicities at $p$ of all but finitely many of the curves in ${\mathcal P}$ are equal to 1. Then ${\mathcal P}$ is irreducible.
\end{coro}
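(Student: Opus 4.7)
The plan is to argue by contradiction using Theorem \ref{bertini}: suppose $\cp$ is not irreducible, so that $\cp$ is composite. Then there exist coprime homogeneous binary forms $R_1(X,Y), R_2(X,Y) \in \algclosure[X,Y]$ of common degree $d \ge 2$, together with a rational map $g = (g_1:g_2)$ from $\gp^2$ to $\gp^1$ with $g_1, g_2 \in \algclosure[X,Y,Z]$ homogeneous of the same degree and coprime, such that $f_{\cp} = r \circ g$ for $r(X:Y) = (R_1(X,Y) : R_2(X,Y))$. A preliminary step, using the hypothesis $\gcd(P,Q) = 1$ (no fixed components) together with the coprimality of $g_1, g_2$ and of $R_1, R_2$, shows that $P$ and $Q$ coincide with $R_1(g_1, g_2)$ and $R_2(g_1, g_2)$ up to a common nonzero scalar that can be absorbed, so we may assume $P = R_1(g_1, g_2)$ and $Q = R_2(g_1, g_2)$.

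Next, I would factor the binary form $\beta R_1(X,Y) - \alpha R_2(X,Y) = \prod_{i=1}^{d}(b_i X - a_i Y)$ over $\algclosure$. Substituting $(X,Y) = (g_1, g_2)$ produces the identity $\beta P - \alpha Q = \prod_{i=1}^{d}(b_i g_1 - a_i g_2)$, exhibiting every member of $\cp$ as a product of $d$ members of the pencil $\cp(g_1, g_2)$. I would then verify the inclusion $BP(\cp) \subseteq \{g_1 = g_2 = 0\}$: since $R_1$ and $R_2$ are coprime binary forms of positive degree, their only common zero in $\algclosure^2$ is the origin, so any common zero of $R_1(g_1, g_2)$ and $R_2(g_1, g_2)$ must satisfy $g_1 = g_2 = 0$.

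Finally, at any base point $p \in BP(\cp)$ one has $g_1(p) = g_2(p) = 0$, so each factor $b_i g_1 - a_i g_2$ vanishes at $p$ with multiplicity at least $1$; consequently every member of $\cp$ has multiplicity at $p$ at least $d \ge 2$. This contradicts the assumption that all but finitely many members of $\cp$ have multiplicity $1$ at $p$, completing the proof. The most delicate step is the first one, obtaining the clean identity $P = R_1(g_1, g_2)$ and $Q = R_2(g_1, g_2)$ rather than merely an equality of rational maps up to a common polynomial factor; once this is in place, both the factorization into linear forms and the multiplicity lower bound are straightforward.
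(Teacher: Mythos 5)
Your proof is correct and follows essentially the same route as the paper's: contradiction via Theorem \ref{bertini}, writing each member of $\cp(P,Q)$ as a product of $d\geq 2$ members of the inner pencil, observing that the base point $p$ is a base point of that inner pencil, and concluding that every member of $\cp(P,Q)$ has multiplicity at least $2$ at $p$. Your version merely fills in details the paper leaves implicit (the normalization $P=R_1(g_1,g_2)$, $Q=R_2(g_1,g_2)$ and the verification that base points satisfy $g_1=g_2=0$), which is a welcome tightening rather than a different argument.
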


\begin{proof}
We shall reason by contradiction. Therefore, assume that ${\mathcal P}(P,Q)$ is not irreducible. Then, by Theorem \ref{bertini}, ${\mathcal P}(P,Q)$ is a composite pencil. So, there exists a rational function $r$ of $\gp^1$ of degree $\geq 2$ and a rational function $g=\frac{P'(X,Y,Z)}{Q'(X,Y,Z)}$ of $\gp^2$ ($P'$ and $Q'$ are homogeneous polynomials of the same degree) such that $f_{\cp}=r\circ g$. This implies that, on the one hand, $p$ is a base point of the pencil ${\mathcal P}(P',Q')$ and, on the other hand, each element of ${\mathcal P}(P,Q)$ is a product of elements of ${\mathcal P}(P',Q')$. These two facts lead to a contradiction because the multiplicity at $p$ of a general element of ${\mathcal P}(P,Q)$ is 1.

\end{proof}

Consider a cluster of infinitely near points ${\mathcal C}$ over $\gp^2$ and a map ${\bold m}:{\mathcal C}\rightarrow \mathbb{N}$. For each positive integer $d$ we shall denote by ${\mathcal L}_d({\mathcal C},{\bold m})$ the projectivisation of the vector space over $\overline{\mathbb{F}}_p$ of all homogeneous polynomials in $\overline{\mathbb{F}}_p[X,Y,Z]$ of degree $d$ defining curves $C$ of $\gp^2$ such that the divisor $C^*-\sum_{p\in S} {\bold m}(p)E_p^*$ (on the surface obtained after blowing-up the points in ${\mathcal C}$)  is effective. The following lemma is proved in \cite[Prop. 3.4]{cgm} in a more general framework.

\begin{lemma}\label{ccc}
Assume that ${\mathcal P}(P,Q)$ is irreducible and set $d:=\deg(P)=\deg(Q)$. Let ${\bold m}: BP(\cp)\rightarrow \mathbb{N}$ be the map that assigns, to each $p\in BP(\cp)$, the multiplicity at $p$ of a general element of ${\mathcal P}(P,Q)$. Then ${\mathcal L}_d(BP(\cp),{\bold m})={\mathcal P}(P,Q)$.

\end{lemma}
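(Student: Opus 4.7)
The approach is to prove both inclusions between $\mathcal{L}_d(BP(\mathcal{P}),\mathbf{m})$ and $\mathcal{P}(P,Q)$. The inclusion $\mathcal{P}(P,Q)\subseteq\mathcal{L}_d(BP(\mathcal{P}),\mathbf{m})$ is the easy direction: for a general $C\in\mathcal{P}$ the definition of $\mathbf{m}$ combined with the total-transform identity $C^{*}=\tilde{C}+\sum_{p\in BP(\mathcal{P})} m_p(C)\,E_p^{*}$ gives $C^{*}-\sum_{p}\mathbf{m}(p)E_p^{*}=\tilde{C}$, which is effective. Since the requirement that $C^{*}-\sum_{p}\mathbf{m}(p)E_p^{*}$ be effective is a closed linear condition on the coefficients of $C$, it persists for every member of the pencil.

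For the reverse inclusion I would work on $Z_\mathcal{P}$ with the morphism $h_\mathcal{P}\colon Z_\mathcal{P}\to\mathbb{P}^1$. The key identification is that on $Z_\mathcal{P}$ the pulled-back sections $\pi_\mathcal{P}^{*}P$ and $\pi_\mathcal{P}^{*}Q$ have precisely $\sum_{p}\mathbf{m}(p)E_p^{*}$ as their common zero divisor, which is exactly what resolves the indeterminacies and makes $h_\mathcal{P}$ a morphism. This yields $h_\mathcal{P}^{*}\mathcal{O}_{\mathbb{P}^1}(1)\cong \pi_\mathcal{P}^{*}\mathcal{O}_{\mathbb{P}^2}(d)\otimes\mathcal{O}_{Z_\mathcal{P}}\bigl(-\sum_{p}\mathbf{m}(p)E_p^{*}\bigr)$, and, correspondingly, the fiber of $h_\mathcal{P}$ over any $(\alpha:\beta)\in\mathbb{P}^1$ is precisely $C_{(\alpha:\beta)}^{*}-\sum_{p}\mathbf{m}(p)E_p^{*}$, where $C_{(\alpha:\beta)}$ denotes the corresponding member of $\mathcal{P}$.

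Now given any $D\in\mathcal{L}_d(BP(\mathcal{P}),\mathbf{m})$, the divisor $D^{*}-\sum_{p}\mathbf{m}(p)E_p^{*}$ is effective by hypothesis and linearly equivalent to the fiber class $F$ of $h_\mathcal{P}$, since $D^{*}\sim C^{*}$ for any $C\in\mathcal{P}$ (both being pullbacks of degree-$d$ curves from $\mathbb{P}^2$). Here the irreducibility of $\mathcal{P}$ enters decisively: absolute irreducibility of the generic member of $\mathcal{P}$ makes the generic fiber of $h_\mathcal{P}$ irreducible and hence connected, so $(h_\mathcal{P})_{*}\mathcal{O}_{Z_\mathcal{P}}=\mathcal{O}_{\mathbb{P}^1}$ and the projection formula gives $h^{0}(Z_\mathcal{P},\mathcal{O}(F))=h^{0}(\mathbb{P}^1,\mathcal{O}(1))=2$, so $\dim|F|=1$. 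Since the fibers of $h_\mathcal{P}$ already trace a $\mathbb{P}^1$-family of effective divisors inside $|F|$, they exhaust the whole complete linear system. Thus $D^{*}-\sum_{p}\mathbf{m}(p)E_p^{*}$ must coincide with some fiber $C_{(\alpha:\beta)}^{*}-\sum_{p}\mathbf{m}(p)E_p^{*}$, forcing $D^{*}=C_{(\alpha:\beta)}^{*}$; pushing down via $\pi_\mathcal{P}$ yields $D=C_{(\alpha:\beta)}\in\mathcal{P}$. The main obstacle I anticipate is verifying the identification $h_\mathcal{P}^{*}\mathcal{O}_{\mathbb{P}^1}(1)\cong \pi_\mathcal{P}^{*}\mathcal{O}_{\mathbb{P}^2}(d)\otimes\mathcal{O}_{Z_\mathcal{P}}\bigl(-\sum_{p}\mathbf{m}(p)E_p^{*}\bigr)$ in detail, since it requires tracking the common factor of $\pi_\mathcal{P}^{*}P$ and $\pi_\mathcal{P}^{*}Q$ through every level of the cluster of base points, including the infinitely near ones.
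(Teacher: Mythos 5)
Your argument is correct in outline, but note that the paper does not actually prove Lemma \ref{ccc}: it defers entirely to \cite[Prop.\ 3.4]{cgm}, so there is no in-paper proof to match. What you have written is the standard self-contained argument, and it is sound. The easy inclusion $\mathcal{P}(P,Q)\subseteq \mathcal{L}_d(BP(\cp),{\bold m})$ via linearity and closedness of the conditions is fine, since a projective line all but finitely many of whose points lie in a linear subspace is contained in it. For the reverse inclusion, the two points that carry the real weight are exactly the ones you should spell out: first, the identification of the scheme-theoretic fibre of $h_{\cp}$ over $(\alpha:\beta)$ with $C^*_{(\alpha:\beta)}-\sum_{p}{\bold m}(p)E_p^*$, which amounts to showing that the fixed part of the pulled-back pencil $\pi_{\cp}^*\cp$ is precisely $\sum_p {\bold m}(p)E_p^*$ and that the residual system is base-point free --- this is where the minimality of $\pi_{\cp}$ (the definition of $BP(\cp)$) is used, and it is the obstacle you correctly flag; second, the step $(h_{\cp})_*\mathcal{O}_{Z_{\cp}}=\mathcal{O}_{\mathbb{P}^1}$ requires the general member to be integral, i.e.\ reduced as well as irreducible (otherwise, as with $\cp(X^p,Y^p)$, the conclusion fails), which is what the paper's notion of an irreducible pencil is meant to guarantee. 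With those two points made precise, the dimension count $h^0(Z_{\cp},\mathcal{O}(F))=2$ via the projection formula closes the argument exactly as you describe.
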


\begin{lemma}\label{eee}
Let $K$ be a subfield of $\overline{\mathbb{F}}_p$ and consider a cluster $\mathcal C$  over $\gp^2$ such that every $p\in {\mathcal C}$ is a $K$-rational point of the surface to which it belongs. Let $d$ be a positive integer and let ${\bold m}:{\mathcal C}\rightarrow \mathbb{N}$ be a map. Then the projective space (over $\overline{\mathbb{F}}_p$) ${\mathcal L}_d({\mathcal C},{\bold m})$ has a basis whose elements are polynomials with coefficients in $K$.
\end{lemma}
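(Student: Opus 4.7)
The plan is to exhibit $\cl_d(\mathcal{C},\mathbf{m})$ as the base change from $K$ to $\algclosure$ of a linear system defined over $K$, so that a $K$-basis of the $K$-model automatically provides the desired basis. Concretely, I would identify $\cl_d(\mathcal{C},\mathbf{m})$ with the projectivisation of a linear subspace $W\subseteq \algclosure[X,Y,Z]_d$ cut out by homogeneous linear equations on the coefficients of a generic homogeneous polynomial $F$ of degree $d$. By (\ref{a}), the effectivity condition $C^{*}-\sum_{p\in\mathcal{C}}\mathbf{m}(p)E_p^{*}\geq 0$ translates, via an inductive descent along the blow-up tower (\ref{seq}), into the requirement that, for each $i\in\{1,\dots,n\}$, the multiplicity at $p_i$ of the strict transform $\tilde C^{(i)}$ of $C$ on $X_i$ be at least some integer $\mu_i$ determined by $\mathbf{m}$ and by the proximity relations among the points of $\mathcal{C}$ alone; in particular, $\mu_i$ does not depend on the coefficients of $F$.

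The heart of the argument is then to prove by induction on $i$ that the condition $m_{p_i}(\tilde C^{(i)})\geq \mu_i$ is a system of $\binom{\mu_i+1}{2}$ linear equations, with coefficients in $K$, on the coefficients of $F$. The base case $i=1$ is classical: after an element of $\mathrm{GL}_3(K)$ moving $p_1$ to $(0:0:1)$ and dehomogenising via $Z$, the condition is the simultaneous vanishing of all Taylor coefficients of $F(x,y,1)$ of total degree less than $\mu_1$, which is visibly $K$-linear in the coefficients of $F$. For the inductive step, the blow-up $\pi_i: X_{i+1}\to X_i$ of the $K$-rational point $p_i$ of the $K$-surface $X_i$ is itself defined over $K$, and an affine chart of $X_{i+1}$ containing the $K$-rational point $p_{i+1}$ may be chosen with coordinates in $K$. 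In such a chart, the defining equation of $\tilde C^{(i+1)}$ is obtained from that of $\tilde C^{(i)}$ (which depends $K$-linearly on the coefficients of $F$, by induction) by substituting the $K$-defined chart equations of $\pi_i$ and dividing by the appropriate power of the exceptional coordinate. Hence its Taylor coefficients at $p_{i+1}$ depend $K$-linearly on those of $F$, and the multiplicity condition at $p_{i+1}$ is again a $K$-linear system.

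Collecting the $K$-linear conditions arising from all $i$ produces a $K$-subspace $W_K\subseteq K[X,Y,Z]_d$ satisfying $W_K\otimes_K\algclosure = W$; any $K$-basis of $W_K$ is then an $\algclosure$-basis of $W$, yielding the required basis of $\cl_d(\mathcal{C},\mathbf{m})$ consisting of polynomials with coefficients in $K$. The main obstacle I anticipate is rigorously managing the bookkeeping in the inductive step: one must verify that at every stage both the local coordinates and the local equations of the blow-up can be chosen over $K$, and that the strict-transform operation (pullback followed by division by the exceptional coordinate) preserves the $K$-linear dependence of the defining equation on the coefficients of $F$. This is standard once a $K$-rational affine chart of the blow-up of a $K$-rational point is fixed, but it is the technically delicate part of the proof.
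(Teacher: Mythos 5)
Your proposal is correct and follows essentially the same route as the paper's proof: the paper also considers a generic homogeneous polynomial of degree $d$, observes that imposing the conditions defining ${\mathcal L}_d({\mathcal C},{\bold m})$ yields a system of linear equations with coefficients in $K$, and concludes by taking a $K$-basis of the solution space. The only difference is that the paper asserts the $K$-linearity of these conditions in one line, whereas you supply the justification (the inductive descent along the blow-up tower using $K$-rational charts), which is a welcome elaboration rather than a departure.
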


\begin{proof}

Consider an homogeneous polynomial of degree $d$ with indeterminate coefficients. If one imposes the conditions defining ${\mathcal L}_d({\mathcal C},{\bold m})$ to this ``generic'' polynomial, it is obtained a system of linear equations with coefficients in $K$ whose solution set is ${\mathcal L}_d({\mathcal C},{\bold m})$. The result follows trivially from this observation.

\end{proof}

\section{Analysis of Singularities}\label{anasing}

Before we begin the proofs of our results towards Conjecture PN3,
we need to study the singular points of the curve $\chi_t$.
The proof follows the same lines as the proof in \cite{HM1}, \cite{HM2}.
We first analyze the singular points, we find a description of them, 
count their number and multiplicity.

Let $r$ be the residue of $t$ modulo $p$, so we may write
\[
t=p^i\ell+r \quad \text{with} \quad 0\leq r< p
\]
and where $\ell$ is not divisible by $p$. We similarly also write
\[
\ell=ps+j \quad \text{with} \quad 0< j< p
\]  
where this time $s$ could be divisible by $p$.

Because $x^t$ is planar iff $x^{pt}$ is planar, we may assume that
$t$ is relatively prime to $p$, i.e., we may assume $r\neq 0$.

We proved  Conjecture PN3 in the case that $t\equiv 1$ mod $p$ and $\ell \equiv$ 1 mod $p$
in \cite{HM2}.

\begin{theo}
$x+y+1$ divides $f_t(x,y)$ if and only if $t$ is odd.
\end{theo}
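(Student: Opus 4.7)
The plan is to use the standard characterization: a linear polynomial $x+y+1$ divides $f_t(x,y)\in\mathbb{F}_p[x,y]$ if and only if the substitution $y\mapsto -x-1$ annihilates $f_t$. So I would compute $f_t(x,-x-1)$ directly and check when it vanishes identically.

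Substituting $y=-x-1$, I use that $y+1=-x$ and $y=-x-1=-(x+1)$. This gives
\[
f_t(x,-x-1)=(x+1)^t-x^t-(-x)^t+(-(x+1))^t=(1+(-1)^t)\bigl((x+1)^t-x^t\bigr).
\]
If $t$ is odd, then $(-1)^t=-1$, the factor $1+(-1)^t$ vanishes, and hence $f_t(x,-x-1)\equiv 0$, so $x+y+1$ divides $f_t(x,y)$.

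For the converse, suppose $t$ is even. Recall that throughout this paper $p$ is odd (planar functions force odd characteristic) and, as observed earlier, we may assume $\gcd(t,p)=1$. Then $1+(-1)^t=2\neq 0$ in $\mathbb{F}_p$, and the polynomial $(x+1)^t-x^t$ has degree $t-1$ with leading coefficient $t\not\equiv 0\pmod p$. Therefore $f_t(x,-x-1)=2\bigl((x+1)^t-x^t\bigr)$ is a nonzero polynomial in $\mathbb{F}_p[x]$, so $x+y+1$ does not divide $f_t(x,y)$.

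I do not anticipate any real obstacle here; the only subtlety is to remember that the hypotheses on $p$ and $t$ (namely $p$ odd and $p\nmid t$) are exactly what is needed to ensure that the nonzero scalar $2$ and the nonzero polynomial $(x+1)^t-x^t$ really are nonzero in $\mathbb{F}_p[x]$, which makes the ``only if'' direction work.
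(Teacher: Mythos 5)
Your proof is correct and follows essentially the same route as the paper: substitute $y=-x-1$ and observe that $f_t(x,-x-1)=(1+(-1)^t)\bigl((x+1)^t-x^t\bigr)$ vanishes identically exactly when $t$ is odd. The only difference is that you justify the ``only if'' direction more carefully (noting $2\neq 0$ since $p$ is odd and that $(x+1)^t-x^t\neq 0$, which in fact already follows from its constant term being $1$), where the paper simply asserts it.
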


\begin{proof}
We substitute $y=-x-1$ in $f_t(x,y)=(x+1)^t-x^t-(y+1)^t+y^t$ obtaining 
$(x+1)^t-x^t-(-1)^t(x)^t+(-1)^t(x+1)^t$ which is identically  zero if and only if 
$t$ is odd.
\end{proof}

Therefore, if $t$ is odd then $g_t(x,y)=\frac{f_t(x,y)}{x-y}$ has an absolutely irreducible factor 
over $\mathbb{F}_p$, so $x^t$ it is not a PN function over $\mathbb{F}_{p^n}$  for infinitely many $n$.

\vspace{.2cm}
\textbf{We will assume from now that $t$ is even.}
This implies that $\ell$ is odd. 
Moreover $\ell\geq 3$, because for $\ell=1$,
$t$ is known to be PN (see Table 1).
\vspace{.2cm}

Notice that 
\[(x+a)^t=(x^{p^i}+a^{p^i})^{\ell}(x+a)^{r}=\]
\[\bigl(x^{p^i\ell}+\binom{\ell}{1}x^{p^i(\ell-1)}a^{p^i}+\cdots+\binom{\ell}{\ell- 1} x^{p^i}a^{p^i(\ell-1)}+a^{p^i\ell}\bigr)
\times   \]
\[ \bigl(x^{r}+\binom{r}{1} x^{r-1}a+\cdots+\binom{r}{r- 1}xa^{r-1}+a\bigr).\]

To study the singularities  we expand at the point $P=(\alpha,\beta)$, 
so then we need to study
\[
f_t(x+\alpha,y+\beta)=(x+\alpha+1)^t-(x+\alpha)^t-(y+\beta+1)^t+(y+\beta)^t.
\]
We write this as a sum of homogeneous parts
\[
f_t(x+\alpha,y+\beta)=F_0+F_1+F_2+\cdots
\]
where $F_i=F_i(x,y)$ is 0 or is a homogeneous polynomial of degree $i$.
{\bf This notation is fixed for the entire paper.}
By definition, a point $P$ is a \emph{singular point}
(or singularity) if and only if  $F_0=F_1=0$ at $P$.
We compute that

\begin{eqnarray*}
F_0&=&((\alpha+1)^{p^i\ell+r}-\alpha^{p^i\ell+r}-(\beta+1)^{p^i\ell+r}+\beta^{p^i\ell+r},\cr
F_1(x,y)&=& \binom{r}{r-1}[((\alpha+1)^{p^i\ell+r-1}-\alpha^{p^i\ell+r-1})x-((\beta+1)^{p^i\ell+r-1}-\beta^{p^i\ell+r-1})y], \cr
F_2(x,y)&=& \binom{r}{r-2}[((\alpha+1)^{p^i\ell+r-2}-\alpha^{p^i\ell+r-2})x^2-((\beta+1)^{p^i\ell+r-2}-\beta^{p^i\ell+r-2})y^2], \cr
\cdots
\cr
F_u(x,y)&=&\binom{r}{r-u}[((\alpha+1)^{p^i\ell+r-u}-\alpha^{p^i\ell+r-u})x-((\beta+1)^{p^i\ell+r-u}-\beta^{p^i\ell+r-u})y],
\cr
\cdots
\cr
F_{r}(x,y)&=&((\alpha+1)^{p^i\ell}-\alpha^{p^i\ell})x-((\beta+1)^{p^i\ell}-\beta^{p^i\ell})y,
\cr
F_{p^i}(x,y) &=& jx^{p^i}((\alpha+1)^{p^i(\ell-1)+r}-\alpha^{p^i(\ell-1)+r})-jy^{p^i}((\beta+1)^{p^i(\ell-1)+r}-\beta^{p^i(\ell-1)+r}),  \cr
F_{p^i+1}(x,y) &=& \binom{r}{r-1}[jx^{p^i+1}((\alpha+1)^{p^i(\ell-1)+r-1}-\alpha^{p^i(\ell-1)+r-1}) \cr
&&-jy^{p^i+1}((\beta+1)^{p^i(\ell-1)+r-1}-\beta^{p^i(\ell-1)+r-1})].
\end{eqnarray*}

\begin{lemma}\label{pro: NoZeroF1F2}
If $F_1(x,y)=F_2(x,y)=0$ then  $r=0$ or $1$.
\end{lemma}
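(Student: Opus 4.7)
My plan is to argue by contrapositive: assume $r \geq 2$, and show that $F_1$ and $F_2$ cannot both vanish identically as polynomials in $x$ and $y$.

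First I would verify that the explicit formulas for $F_1$ and $F_2$ displayed just above the lemma are available in this regime (they require $u \leq r$) and that their leading scalar factors $\binom{r}{r-1} = r$ and $\binom{r}{r-2} = \binom{r}{2}$ are nonzero in $\mathbb{F}_p$. This is precisely where the standing hypotheses intervene: $p$ is odd (planar functions do not exist in characteristic $2$), and $2 \leq r < p$, so $r$, $r-1$ and $2$ are all units modulo $p$ and the binomials are genuinely invertible coefficients.

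Next, requiring $F_1 = F_2 = 0$ in $\algclosure[x,y]$ decouples into four scalar equations, two in $\alpha$ and two in $\beta$. I would focus on the $\alpha$ pair
$$(\alpha+1)^{p^i\ell+r-1} = \alpha^{p^i\ell+r-1}, \qquad (\alpha+1)^{p^i\ell+r-2} = \alpha^{p^i\ell+r-2},$$
the $\beta$ pair being handled identically. The values $\alpha = 0$ and $\alpha = -1$ are eliminated by inspection: in each case one side of the first equation becomes $0$ while the other is $\pm 1$. For any other $\alpha \in \algclosure$, I would divide through by the right-hand sides and rewrite the two identities as $(1+\alpha^{-1})^N = 1 = (1+\alpha^{-1})^{N-1}$ with $N = p^i\ell + r - 1$. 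Taking the ratio collapses the exponent to $1$, giving $1 + \alpha^{-1} = 1$, i.e.\ $\alpha^{-1} = 0$, which is impossible.

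I do not anticipate a genuine obstacle; the argument is almost entirely bookkeeping. The one point that must not be glossed over is the nonvanishing of the binomial coefficients in $\mathbb{F}_p$, since that is precisely what licenses stripping them off to reduce to the pure exponential equations. Once that is in place, the ``two consecutive powers equal to $1$'' trick closes the proof in essentially one line, for both $\alpha$ and $\beta$.
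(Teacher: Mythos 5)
Your proposal is correct and follows essentially the same route as the paper: both arguments reduce $F_1=F_2=0$ to the pair of conditions $\bigl(\tfrac{\alpha+1}{\alpha}\bigr)^{N}=\bigl(\tfrac{\alpha+1}{\alpha}\bigr)^{N-1}=1$ and take the ratio to reach the impossible $\tfrac{\alpha+1}{\alpha}=1$, forcing the binomial coefficients $\binom{r}{r-1}$, $\binom{r}{r-2}$ to vanish and hence $r\in\{0,1\}$. Your contrapositive packaging, the explicit exclusion of $\alpha\in\{0,-1\}$, and the remark that $2\leq r<p$ makes the binomials units are minor tidyings of the same argument.
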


\begin{proof}
$F_1(x,y)=0$ implies that $(\alpha+1)^{p^i\ell+r-1}-\alpha^{p^i\ell+r-1}=0 \Leftrightarrow (\frac{\alpha+1}{\alpha})^{p^i\ell+r-1}=1$. 

$F_2(x,y)=0$ implies that $(\alpha+1)^{p^i\ell+r-2}-\alpha^{p^i\ell+r-2}=0 \Leftrightarrow (\frac{\alpha+1}{\alpha})^{p^i\ell+r-2}=1$. 

Hence 
\[
(\frac{\alpha+1}{\alpha})^{p^i\ell+r-1}/(\frac{\alpha+1}{\alpha})^{p^i\ell+r-2}=1 \Leftrightarrow (\frac{\alpha+1}{\alpha})=1 \Leftrightarrow 1=0.
\]
But this is impossible. So, the only possibilities are:
\begin{itemize}
\item $F_1(x,y)\neq 0$ and the coefficient $\binom{r}{r-2}=0$ in $F_2(x,y)$, i.e., $r=0$ or $1$.
\item  Both coefficients are zero, $\binom{r}{r-2}=\binom{r}{r-1}=0\Leftrightarrow r=0$.
\end{itemize}
This completes the proof.
\end{proof}

A point $P$ is singular iff $F_0=F_1=0$ at $P$. Thus, we need to expand the expression 
$((\alpha+1)^{p^i\ell+r-1}-\alpha^{p^i\ell+r-1})=$
\[\alpha^{p^i\ell+r-1}+j\alpha^{p^i(\ell-1)+r-1}+\cdots+j\alpha^{p^i+r-1}+\alpha^{r-1}+\]
\[\binom{r-1}{1}\alpha^{p^i\ell+r-2}+j\binom{r-1}{1}\alpha^{p^i(\ell-1)+r-2}+\cdots+j\binom{r-1}{1}\alpha^{p^i+r-2}+\binom{r-1}{1}\alpha^{r-2}+\]
\[\cdots\]
\[\binom{r-1}{r-2}\alpha^{p^i\ell+1}+j\binom{r-1}{r-2}\alpha^{p^i(\ell-1)+1}+\cdots+j\binom{r-1}{r-2}\alpha^{p^i+1}+\binom{r-1}{r-2}\alpha+\]
\[\alpha^{p^i\ell}+j\alpha^{p^i(\ell-1)}+\cdots+j\alpha^{p^i}+1 -\alpha^{p^i(\ell-1)+r}.\]

This is a complicated expression, therefore  we distinguish different cases.
Recall $t=p^i\ell+r$.
Because $x^t$ is planar iff $x^{pt}$ is planar, we may assume that
$t$ is relatively prime to $p$, i.e., $r\neq 0$.

\begin{itemize}
\item[(A)] $r\neq 1$.
\item[(B)] $r=1$.  We divide this case into two subcases:
\begin{itemize}
\item[(B.1)] $gcd(\ell, p^i-1)<\ell$. 
\item[(B.2)] $gcd(\ell, p^i-1)=\ell$. 
\end{itemize}
\end{itemize}

In this paper we will prove many cases of (A) and essentially
all of case (B).

Next we describe the singular points in the various cases.





 \subsection{Affine Singular Points in Case (A)}

The analysis is straightforward in this case.

\begin{lemma}\label{le: Mult2casoA}
If $P=(\alpha,\beta)$ is a singular point and we are in case (A), then the multiplicity is $m_P(f_t)=2$ and 
$m_P(g_t)=2$ if $\alpha\neq \beta$ and $m_P(g_t)=1$ otherwise. 
\end{lemma}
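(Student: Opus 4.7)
The plan is to pin down $m_P(f_t)$ at any singular point in Case (A) first, and then transfer the information to $g_t$ via the factorisation $f_t=(x-y)g_t$ and additivity of multiplicity.

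For the first step, since $P=(\alpha,\beta)$ is singular, the homogeneous parts $F_0$ and $F_1$ in the expansion $f_t(x+\alpha,y+\beta)=F_0+F_1+F_2+\cdots$ vanish, so $m_P(f_t)\geq 2$. To upgrade this to equality I would invoke the contrapositive of Lemma \ref{pro: NoZeroF1F2}: if $r\notin\{0,1\}$, then $F_1$ and $F_2$ cannot both vanish identically. In Case (A) we have $r\neq 1$ by definition, and $r\neq 0$ by the standing reduction $\gcd(t,p)=1$ (recall $r$ is the residue of $t$ modulo $p$). So $r\geq 2$, and hence $F_2\not\equiv 0$, giving $m_P(f_t)=2$.

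For the second step, I would use additivity of the multiplicity under products. From $f_t=(x-y)g_t$ we have
\[
m_P(f_t)=m_P(x-y)+m_P(g_t).
\]
If $\alpha\neq\beta$, then the linear form $x-y$ takes the nonzero value $\alpha-\beta$ at $P$, so $m_P(x-y)=0$ and therefore $m_P(g_t)=2$. If $\alpha=\beta$, then $x-y$ is a smooth linear form vanishing at $P$, so $m_P(x-y)=1$, and consequently $m_P(g_t)=2-1=1$.

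I do not expect any serious obstacle; the argument is essentially mechanical once Lemma \ref{pro: NoZeroF1F2} is in hand. The only care needed is to confirm we are genuinely in the regime $r\geq 2$ (combining the $r\neq 1$ hypothesis of Case (A) with the $\gcd(t,p)=1$ reduction) and to note that the binomial coefficients $\binom{r}{r-1}$ and $\binom{r}{r-2}$ appearing in $F_1$ and $F_2$ are nonzero in $\mathbb{F}_p$, which is automatic since $0<r<p$ and $p$ is odd.
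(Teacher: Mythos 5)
Your argument is correct and follows essentially the same route as the paper: both use Lemma \ref{pro: NoZeroF1F2} (in contrapositive form, since $r\neq 0,1$ in Case (A)) to force $F_2\not\equiv 0$ and hence $m_P(f_t)=2$, and both then pass to $g_t$ by noting that the factor $x-y$ contributes multiplicity $1$ at $P$ exactly when $\alpha=\beta$. Your version merely spells out the additivity $m_P(f_t)=m_P(x-y)+m_P(g_t)$ and the nonvanishing of the binomial coefficients, which the paper leaves implicit.
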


\begin{proof}
Since $P$ is a singular point we have  $F_1(x,y)=0$. 
The multiplicity must be two because otherwise 
$F_2(x,y)=0$ and then Lemma \ref{pro: NoZeroF1F2} would be false.

Moreover, if $\alpha=\beta$ then $P$ is a point on the curve
$x-y$ so the multiplicity decreases by one for $g_t(x,y)$.
\end{proof}

\begin{lemma}\label{le:Proct2LinesCaseA}
If $P=(\alpha,\beta)$ is a singular point and we are in case (A), then the homogeneous component $F_2(x,y)$ is a product of two different lines. 
\end{lemma}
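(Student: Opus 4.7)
The plan is to use the explicit formula for $F_2(x,y)$ displayed earlier in the section and show that in Case (A) both of its coefficients are nonzero, so that $F_2$ is a nondegenerate diagonal binary quadratic form, which in odd characteristic always splits as a product of two distinct lines.

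First I would verify that the overall scalar $\binom{r}{r-2}=\binom{r}{2}=r(r-1)/2$ in the formula for $F_2$ is nonzero in $\mathbb{F}_p$. Indeed, in Case (A) we have $2\le r<p$ and $p$ is odd, so none of $r$, $r-1$, or $2$ vanish modulo $p$. Hence $F_2(x,y) = \binom{r}{2}\bigl(A x^2 - B y^2\bigr)$, where $A := (\alpha+1)^{t-2}-\alpha^{t-2}$ and $B := (\beta+1)^{t-2}-\beta^{t-2}$, with $t=p^i\ell+r$ as fixed.

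Next I would use the hypothesis that $P=(\alpha,\beta)$ is a singular point, so $F_1 = 0$. Since the leading scalar $\binom{r}{r-1}=r$ is nonzero mod $p$, the vanishing of $F_1$ forces
\[
(\alpha+1)^{t-1}=\alpha^{t-1} \qquad\text{and}\qquad (\beta+1)^{t-1}=\beta^{t-1}.
\]
Observe that $\alpha\notin\{0,-1\}$: if $\alpha=0$ then the left side equals $1$ while the right side equals $0$, and if $\alpha=-1$ the reverse happens (using $t\ge 3$). The same applies to $\beta$. Setting $\gamma:=(\alpha+1)/\alpha\in\overline{\mathbb{F}}_p^{*}$, the relation above says $\gamma^{t-1}=1$.

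Now suppose for contradiction that $A=0$. Then $(\alpha+1)^{t-2}=\alpha^{t-2}$ as well, so $\gamma^{t-2}=1$. Dividing the two relations gives $\gamma=1$, which means $\alpha+1=\alpha$, absurd. Hence $A\neq 0$, and by the symmetric argument $B\neq 0$. Finally, in an algebraic closure of $\mathbb{F}_p$ (with $p$ odd) one may choose $a,b$ with $a^2=A$, $b^2=B$, both nonzero, so
\[
Ax^2 - By^2 = (ax - by)(ax + by),
\]
and the two linear factors $ax-by$ and $ax+by$ are distinct because $-1\neq 1$ in odd characteristic. This gives the desired factorisation of $F_2$. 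There is no real obstacle here: the only nontrivial ingredient is the elimination argument deducing $A\neq 0$ from $F_1=0$, which is a one-line consequence of comparing $(t-1)$-th and $(t-2)$-th power relations.
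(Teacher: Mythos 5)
Your proof is correct and follows essentially the same route as the paper: write $F_2$ as the diagonal form $\binom{r}{2}(Ax^2-By^2)$ and factor it as $(ax-by)(ax+by)$ after extracting square roots in $\overline{\mathbb{F}}_p$. You are in fact more careful than the paper's own two-line proof, since you explicitly check that $A\neq 0$ and $B\neq 0$ (via the ratio trick comparing the $(t-1)$-th and $(t-2)$-th power relations, which is exactly the argument of the paper's Lemma~\ref{pro: NoZeroF1F2}) and that the two linear factors are distinct because $p$ is odd --- details the paper leaves implicit but which are needed for the word ``different'' in the statement.
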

\begin{proof}
Remember that 
\[
F_2(x,y)= \binom{r}{r-2}[((\alpha+1)^{p^i\ell+r-2}-\alpha^{p^i\ell+r-2})x^2-((\beta+1)^{p^i\ell+r-2}-\beta^{p^i\ell+r-2})y^2].
\]
We can rewrite this as 
\[
F_2(x,y)= \binom{r}{r-2}[(ax+by)*(ax-by)],
\]
where $a=((\alpha+1)^{p^i\ell+r-2}-\alpha^{p^i\ell+r-2})^{1/2}$ and $b=((\beta+1)^{p^i\ell+r-2}-\beta^{p^i\ell+r-2})^{1/2}$.
\end{proof}

\subsection{Affine Singular Points in Case (B)}

We always use the notation $P=(\alpha,\beta)$.
Looking at the homogeneous components in $f_t(x,y)$ we obtain  equations for the singular points.

\begin{lemma}
$P=(\alpha,\beta)$ is a singular point of $f_t$ if and only if 
\begin{equation}\label{eq:1}
 (\alpha+1)^{p^i \ell+1}-\alpha^{p^i \ell+1} -(\beta+1)^{p^i \ell+1}+\beta^{p^i \ell+1}=0
\end{equation}
\begin{equation}\label{eq:2}
 (\alpha+1)^{\ell}-\alpha^{ \ell}=0;  (\beta+1)^{\ell}-\beta^{\ell}=0
\end{equation}
\end{lemma}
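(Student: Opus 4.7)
The plan is a direct unpacking of the definition of singular point specialized to the case $r=1$. By definition, $P=(\alpha,\beta)$ is a singular point of $f_t$ precisely when $F_0=F_1=0$ at $P$, where $F_0$ and $F_1$ are the constant and linear homogeneous components of $f_t(x+\alpha,y+\beta)$ written down in Section~\ref{anasing}.

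First I would substitute $r=1$ into the explicit formula
\[
F_0=(\alpha+1)^{p^i\ell+r}-\alpha^{p^i\ell+r}-(\beta+1)^{p^i\ell+r}+\beta^{p^i\ell+r},
\]
which immediately yields equation~(\ref{eq:1}). For $F_1$, substituting $r=1$ gives
\[
F_1(x,y)=\bigl((\alpha+1)^{p^i\ell}-\alpha^{p^i\ell}\bigr)x-\bigl((\beta+1)^{p^i\ell}-\beta^{p^i\ell}\bigr)y,
\]
since $\binom{1}{0}=1$. Because $x$ and $y$ are independent indeterminates, the vanishing of $F_1$ as a polynomial is equivalent to the simultaneous vanishing of both coefficients.

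The small key step is to rewrite these coefficients using Frobenius. In characteristic $p$ we have $(u+v)^{p^i}=u^{p^i}+v^{p^i}$, hence
\[
(\alpha+1)^{p^i\ell}-\alpha^{p^i\ell}=\bigl((\alpha+1)^\ell-\alpha^\ell\bigr)^{p^i},
\]
and analogously for $\beta$. Since the $p^i$-th power map is injective on $\overline{\mathbb{F}}_p$, the coefficient of $x$ vanishes iff $(\alpha+1)^\ell-\alpha^\ell=0$, and the coefficient of $y$ vanishes iff $(\beta+1)^\ell-\beta^\ell=0$. These are exactly the two conditions in equation~(\ref{eq:2}).

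Both implications are proved by the same computation read in opposite directions, so there is no genuine obstacle; the only point that requires a moment of care is the Frobenius identity applied to the $p^i\ell$-th power, which lets us pass freely between the conditions written at exponent $p^i\ell$ and the more compact conditions written at exponent $\ell$ that appear in~(\ref{eq:2}).
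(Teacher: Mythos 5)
Your proof is correct and follows exactly the route the paper intends: the paper states this lemma without proof as an immediate consequence of the displayed formulas for $F_0$ and $F_1$ specialized to $r=1$, and your Frobenius observation $(\alpha+1)^{p^i\ell}-\alpha^{p^i\ell}=\bigl((\alpha+1)^{\ell}-\alpha^{\ell}\bigr)^{p^i}$ together with injectivity of the $p^i$-th power map is precisely the step needed to pass from the vanishing of the coefficients of $F_1$ to the conditions at exponent $\ell$ in (\ref{eq:2}).
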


It follows from it  that $f_t(x,y)$ has at most $(\ell-1)^2$ singular points.

We need to compute the multiplicity of the singular points. The homogeneous component $F_{p^i}(x,y)$ is non-zero except when 
\begin{equation}\label{eq:3}(\alpha+1)^{p^i(\ell-1)+1}-\alpha^{p^i(\ell-1)+1}=0.
\end{equation}
 and 
 \begin{equation}\label{eq:4}(\beta+1)^{p^i(\ell-1)+1}-\beta^{p^i(\ell-1)+1}=0.
 \end{equation}
 
 Hence we conclude the following.

\begin{lemma}
Let $P=(\alpha,\beta)$ be a singular point of $f_t$. 
Then $F_{p^i} =0$ if and only if  equations  (\ref{eq:1}) and (\ref{eq:2}) hold together with 
\begin{equation}\label{eq:5}
\alpha^{\ell}=\beta^{\ell}
\end{equation}
\begin{equation}\label{eq:6}
\alpha^{p^i-1}=\beta^{p^i-1}
\end{equation}
\end{lemma}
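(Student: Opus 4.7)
The plan is to unpack $F_{p^i}$ using the explicit expansion derived above, and then simplify the resulting conditions via the Frobenius endomorphism applied to equation (2). Setting $r=1$ in the formula for $F_{p^i}(x,y)$ given in the paper, one sees that $F_{p^i}$ is a polynomial of the form $j\,x^{p^i}\,A - j\,y^{p^i}\,B$, where
$$A=(\alpha+1)^{p^i(\ell-1)+1}-\alpha^{p^i(\ell-1)+1} \quad \text{and} \quad B=(\beta+1)^{p^i(\ell-1)+1}-\beta^{p^i(\ell-1)+1}.$$
Since $0<j<p$, the constant $j$ is nonzero in $\overline{\mathbb{F}}_p$, so the identity $F_{p^i}=0$ (as a polynomial in $x,y$) forces $A=B=0$, which is precisely the pair of equations (3) and (4).

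Next I would use the Frobenius endomorphism to reduce these equations, together with (1), to the claimed conditions. Raising (2) to the $p^i$-th power yields $(\alpha+1)^{p^i\ell}=\alpha^{p^i\ell}$, and similarly for $\beta$. Applied to (1), this identity immediately gives $(\alpha+1)^{p^i\ell+1}-\alpha^{p^i\ell+1}=\alpha^{p^i\ell}(\alpha+1)-\alpha^{p^i\ell}\cdot\alpha=\alpha^{p^i\ell}$, and analogously on the $\beta$ side, so (1) collapses to $\alpha^{p^i\ell}=\beta^{p^i\ell}$; injectivity of Frobenius on $\overline{\mathbb{F}}_p$ then yields $\alpha^\ell=\beta^\ell$, which is (5). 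For (3), rewrite
$$(\alpha+1)^{p^i(\ell-1)+1}=\frac{(\alpha+1)^{p^i\ell}(\alpha+1)}{(\alpha+1)^{p^i}}=\frac{\alpha^{p^i\ell}(\alpha+1)}{\alpha^{p^i}+1},$$
while $\alpha^{p^i(\ell-1)+1}=\alpha^{p^i\ell+1}/\alpha^{p^i}$. Setting these equal and clearing denominators collapses (3) to $\alpha^{p^i}=\alpha$. The same manipulation applied to (4) yields $\beta^{p^i}=\beta$, and together these give $\alpha^{p^i-1}=\beta^{p^i-1}$, which is (6). The reverse implication runs through the same identities in the opposite direction.

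The main obstacle is essentially careful bookkeeping with the unwieldy exponents $p^i\ell+1$ and $p^i(\ell-1)+1$: the unifying trick at each step is to substitute $(\alpha+1)^{p^i\ell}$ by $\alpha^{p^i\ell}$ via Frobenius on (2), and simultaneously replace $(\alpha+1)^{p^i}$ by $\alpha^{p^i}+1$. One also has to observe that equation (2) has no solutions with $\alpha\in\{0,-1\}$ (since $\ell\geq 3$), so that the divisions by $\alpha$, $\alpha+1$, $\alpha^{p^i}$ and $\alpha^{p^i}+1$ performed along the way are all legitimate.
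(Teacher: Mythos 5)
Your reduction of $F_{p^i}=0$ to the pair of conditions $(\alpha+1)^{p^i(\ell-1)+1}=\alpha^{p^i(\ell-1)+1}$ and $(\beta+1)^{p^i(\ell-1)+1}=\beta^{p^i(\ell-1)+1}$ (i.e.\ (\ref{eq:3}) and (\ref{eq:4})) is correct, and your derivation of (\ref{eq:5}) from (\ref{eq:1}) and (\ref{eq:2}) via Frobenius is the same as the paper's. Where you diverge is in how you reach (\ref{eq:6}): you combine (\ref{eq:3}) with the $p^i$-th power of (\ref{eq:2}) for $\alpha$ \emph{alone} and conclude $\alpha^{p^i}=\alpha$, and likewise $\beta^{p^i}=\beta$. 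The paper instead substitutes (\ref{eq:3}) and (\ref{eq:4}) into the two-variable equation (\ref{eq:1}), obtains $\alpha^{p^i(\ell-1)+1}=\beta^{p^i(\ell-1)+1}$, and divides by $\alpha^{p^i\ell}=\beta^{p^i\ell}$ to reach only the symmetric relation $\alpha^{p^i-1}=\beta^{p^i-1}$. Your single-variable computation is correct (given $(\alpha+1)^{p^i\ell}=\alpha^{p^i\ell}$, condition (\ref{eq:3}) is equivalent to $((\alpha+1)/\alpha)^{p^i-1}=1$, hence to $\alpha^{p^i}=\alpha$), and it delivers a strictly stronger conclusion than (\ref{eq:6}), namely $\alpha^{p^i-1}=\beta^{p^i-1}=1$.

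That extra strength is precisely what breaks your final sentence. The reverse implication does \emph{not} ``run through the same identities in the opposite direction'': your chain terminates at $\alpha^{p^i-1}=1$ and $\beta^{p^i-1}=1$, and the last step from there to $\alpha^{p^i-1}=\beta^{p^i-1}$ is one-way. Starting from (\ref{eq:5}) and (\ref{eq:6}) you only know that $\alpha^{p^i-1}$ and $\beta^{p^i-1}$ agree, not that they equal $1$, so you cannot climb back to (\ref{eq:3}) and (\ref{eq:4}). In fact your own computation shows that $F_{p^i}=0$ forces $(\alpha+1)/\alpha$ and $(\beta+1)/\beta$ to lie in $\mathbb{F}_{p^i}$, a condition that (\ref{eq:1}), (\ref{eq:2}), (\ref{eq:5}), (\ref{eq:6}) do not imply: a diagonal singular point $(\alpha,\alpha)$ with $(1+1/\alpha)^{\ell}=1$ but $1+1/\alpha\notin\mathbb{F}_{p^i}$ satisfies all four equations while $F_{p^i}\neq 0$. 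So the ``if'' direction needs a separate argument and cannot be obtained by reversing your identities; note that the paper's own proof also establishes only the ``only if'' direction, which is the direction actually used later (to deduce $(\alpha/\beta)^{d}=1$ with $d=\gcd(\ell,p^i-1)$ and bound the number of Type I points).
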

\begin{proof}
From (\ref{eq:1}) and (\ref{eq:2}) we clearly have that $\alpha^{\ell}=\beta^{\ell}$. 

Using (\ref{eq:3}) and (\ref{eq:4}) in (\ref{eq:1}) we obtain   
$$
\alpha^{p^i (\ell-1)+1}[(\alpha+1)^{p^i}-\alpha^{p^i}]-\beta^{p^i (\ell-1)+1}[(\beta+1)^{p^i }-\beta^{p^i}]=0,
$$
which is equal to  
$$
\alpha^{p^i (\ell-1)+1}-\beta^{p^i (\ell-1)+1}=0.
$$
Multiplying it by $\alpha^{p^i}\beta^{p^i}$ we get, 
$$
\alpha^{p^i \ell+1}\beta^{p^i}-\beta^{p^i \ell+1}\alpha^{p^i}=0.
$$
Since  $\alpha^{\ell}=\beta^{\ell}$ the latter is equivalent to 
$\alpha \beta^{p^i}=\beta\alpha^{p^i}=0 \Leftrightarrow \alpha^{p^i-1}=\beta^{p^i-1}.$
\end{proof}

Thus  a singular point $(\alpha,\beta)$ has homogeneous component $F_{p^i}=0$ if and only if $\alpha=\beta E$ where $E^d=1$, and $d=gcd(\ell, p^i-1)$. 

How many  such  points there are is the next issue.

\begin{lemma}
In case (B) there are at most $(d-1)^2$ singular points $(\alpha,\beta)$ such that $F_{p^i} =0$,
where $d=gcd(\ell, p^i-1)$.
\end{lemma}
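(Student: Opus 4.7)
The plan is to combine the singularity equations for $f_t$ with the vanishing of $F_{p^i}$ so that each coordinate $\alpha$ (respectively $\beta$) is forced to have $(\alpha+1)/\alpha$ equal to a nontrivial $d$-th root of unity. Since there are only $d-1$ such roots of unity, each gives at most $d-1$ values for $\alpha$, and multiplying the counts yields the bound $(d-1)^2$.

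First I would note that $\alpha\neq 0$ and $\alpha+1\neq 0$: equation (\ref{eq:2}), $(\alpha+1)^{\ell}=\alpha^{\ell}$, forces $1=0$ if $\alpha=0$, and forces $(-1)^{\ell}=0$ if $\alpha=-1$ (and $\ell$ is odd, as noted at the start of Section \ref{anasing}). Next I would raise equation (\ref{eq:2}) to the $p^i$-th power, obtaining
\[
(\alpha+1)^{p^i\ell}=\alpha^{p^i\ell},
\]
and divide this by equation (\ref{eq:3}), $(\alpha+1)^{p^i(\ell-1)+1}=\alpha^{p^i(\ell-1)+1}$. The exponents subtract to $p^i\ell-(p^i(\ell-1)+1)=p^i-1$, yielding
\[
(\alpha+1)^{p^i-1}=\alpha^{p^i-1}.
\]
Setting $\zeta:=(\alpha+1)/\alpha$, this reads $\zeta^{p^i-1}=1$, while equation (\ref{eq:2}) reads $\zeta^{\ell}=1$. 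Hence the order of $\zeta$ divides $d=\gcd(\ell,p^i-1)$, so $\zeta$ is a $d$-th root of unity. Since $\zeta\neq 1$ (otherwise $\alpha+1=\alpha$), $\zeta$ ranges over at most $d-1$ values, and each determines $\alpha=1/(\zeta-1)$ uniquely.

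The symmetric argument, using equations (\ref{eq:2}) and (\ref{eq:4}) in place of (\ref{eq:2}) and (\ref{eq:3}), gives at most $d-1$ possibilities for $\beta$, and the lemma follows by taking the product. I do not anticipate any serious obstacle: the content of the proof is pure exponent arithmetic, with the only slightly delicate observation being that raising (\ref{eq:2}) to the $p^i$-th power and then canceling against (\ref{eq:3}) isolates the equation $(\alpha+1)^{p^i-1}=\alpha^{p^i-1}$, which is precisely what the $d$-th root of unity conclusion requires.
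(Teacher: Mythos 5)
Your proof is correct and follows essentially the same route as the paper: the paper substitutes $a_1=(\alpha+1)/\alpha$, $b_1=(\beta+1)/\beta$ and deduces $a_1^{p^i-1}=1$ from $a_1^{\ell}=1$ and $a_1^{p^i(\ell-1)+1}=1$, which is exactly your manipulation with $\zeta$, and then both arguments bound each coordinate by the $d-1$ nontrivial $d$-th roots of unity and multiply.
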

\begin{proof}
$F_{p^i}(x+\alpha,y+\beta)=0$ if and only if 
\[\alpha^{\ell}=\beta^{\ell}\]
\[
 (\alpha+1)^{\ell}-\alpha^{ \ell}=0;  (\beta+1)^{\ell}-\beta^{\ell}=0\]
\[(\alpha+1)^{p^i(\ell-1)+1}-\alpha^{p^i(\ell-1)+1}=0.\]
\[(\beta+1)^{p^i(\ell-1)+1}-\beta^{p^i(\ell-1)+1}=0.\]

Equivalently
\[(\alpha/\beta)^{\ell}=1\]
\[
 (1+1/\alpha)^{\ell}=1;  (1+1/\beta)^{\ell}=1\]
\[(1+1/\alpha)^{p^i(\ell-1)+1}=1.\]
\[(1+1/\beta)^{p^i(\ell-1)+1}=1.\]
Doing the change of coordinates $1/\alpha=a$ and  $1/\beta=b$ we get
\[(b/a)^{\ell}=1\]
\[
 (1+a)^{\ell}=1;  (1+b)^{\ell}=1\]
\[(1+a)^{p^i(\ell-1)+1}=1.\]
\[(1+b)^{p^i(\ell-1)+1}=1.\]

Doing the change of coordinates $a=a_1-1$ and  $b=b_1-1$ we get

\[(b_1-1)^{\ell}=(a_1-1)^{\ell}\]
\[
 (a_1)^{\ell}=1;  (b_1)^{\ell}=1\]
\[(a_1)^{p^i(\ell-1)+1}=1.\]
\[(b_1)^{p^i(\ell-1)+1}=1.\]

Moreover for any pair $(a_1,b_1)$ satisfying this equations with $a_1,b_1\neq 1$ there is a singular point $(\alpha,\beta)$ with $F_{p^i}(x+\alpha,y+\beta)=0$.

Notice that $$(a_1)^{\ell}/(a_1)^{p^i(\ell-1)+1}=a_1^{p^i-1}=1$$ and 
$$(b_1)^{\ell}/(b_1)^{p^i(\ell-1)+1}=b_1^{p^i-1}=1.$$

Therefore $a_1^d=1=b_1^d$, where $d=gcd(\ell,p^i-1)$. So we conclude that there is at most $(d-1)^2$ singular points $(\alpha,\beta)$ with $F_{p^i} =0$.
\end{proof}

\begin{coro}
In case (B.1) there are at most $(\ell/3-1)^2$ singular points $(\alpha,\beta)$ with  $\alpha\neq \beta$ and  $F_{p^i} =0$.
\end{coro}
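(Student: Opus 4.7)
The plan is to deduce this corollary almost immediately from the previous lemma, combined with elementary arithmetic on the divisors of $\ell$.

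First, I would observe that the previous lemma already gives the bound: the set of singular points $(\alpha,\beta)$ with $F_{p^i}=0$ has cardinality at most $(d-1)^2$, where $d=\gcd(\ell,p^i-1)$. Restricting further to pairs with $\alpha\neq\beta$ only removes points (those on the diagonal), so the same bound applies. Therefore it suffices to show that in Case (B.1) we have $d\leq \ell/3$, because then $(d-1)^2\leq (\ell/3-1)^2$ follows at once.

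The arithmetic step is the following. By the defining hypothesis of Case (B.1), $d=\gcd(\ell,p^i-1)<\ell$, so $d$ is a \emph{proper} divisor of $\ell$. We have assumed from the start that $t$ is even, which as noted in the text forces $\ell$ to be odd. Consequently the smallest prime divisor of $\ell$ is at least $3$, so every proper divisor of $\ell$ is at most $\ell/3$. Applying this to $d$ gives $d\leq \ell/3$, which completes the argument.

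There is essentially no obstacle in this proof: the content is packaged in the previous lemma, and the corollary merely exploits the parity of $\ell$ to convert the bound $(d-1)^2$ into the explicit $\ell$-dependent bound $(\ell/3-1)^2$. The only point worth emphasising in the write-up is why we may assert $d\leq \ell/3$, namely the combination of $d\mid \ell$, $d<\ell$, and $\ell$ odd.
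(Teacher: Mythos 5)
Your proposal is correct and follows essentially the same route as the paper: both deduce the bound from the preceding lemma's count of at most $(d-1)^2$ points with $F_{p^i}=0$, and both convert this to $(\ell/3-1)^2$ by noting that in Case (B.1) $d=\gcd(\ell,p^i-1)$ is a proper divisor of the odd number $\ell$, hence $d\leq \ell/3$. Your write-up is in fact slightly more explicit than the paper's about why the proper-divisor and parity facts combine to give $d\leq\ell/3$, and about why the restriction to $\alpha\neq\beta$ preserves the bound.
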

\begin{proof}
Since $t=p^i\ell+1$ is even then $\ell$ is odd, so $d$ is odd as well.  Since $gcd(\ell, p^i-1)<\ell$ both odd then $d\leq \ell/3$. Hence we have at most $(\ell/3-1)^2$ points. 
\end{proof}

Finally, we have a lemma concerning the component of degree $p^i+1$.
We will come back to use this in Section \ref{furtheranalysis}.

\begin{lemma}
$F_{p^i+1}=A x^{p^i+1}- B y^{p^i+1}$, where $A=(\alpha+1)^{p^i(\ell-1)}-\alpha^{p^i(\ell-1)}$ and 
$B=(\beta+1)^{p^i(\ell-1)}-\beta^{p^i(\ell-1)}$.  
\end{lemma}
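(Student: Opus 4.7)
The plan is to obtain the formula by a direct substitution into the general expression for $F_{p^i+1}(x,y)$ already derived (at the start of Section~\ref{anasing}), specialized to Case~(B), where $r=1$. Setting $r=1$, the binomial coefficient $\binom{r}{r-1}=1$, the exponent $p^i(\ell-1)+r-1$ collapses to $p^i(\ell-1)$, and the stated shape of $F_{p^i+1}$ drops out immediately (up to the scalar $j$, which is a nonzero element of $\mathbb{F}_p$ and can be absorbed into the definitions of $A$ and $B$, or tracked explicitly).

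To make the argument self-contained, one can re-derive the relevant coefficient from scratch as follows. First note that $f_t(x+\alpha,y+\beta)$ splits as a polynomial in $(x,\alpha)$ minus the same polynomial in $(y,\beta)$, so no homogeneous component $F_u(x,y)$ contains mixed monomials $x^ay^b$ with $a,b>0$. In particular $F_{p^i+1}$ must have the form $Ax^{p^i+1}-By^{p^i+1}$ for scalars $A,B$ depending only on $\alpha$ and $\beta$ respectively. To identify $A$ it suffices to compute the coefficient of $x^{p^i+1}$ in $(x+\alpha+1)^t-(x+\alpha)^t$. Using $r=1$ we factor
\[
(x+a)^t \;=\; (x^{p^i}+a^{p^i})^{\ell}\,(x+a) \;=\; \Bigl(\sum_{k=0}^{\ell}\binom{\ell}{k}x^{p^i(\ell-k)}a^{p^ik}\Bigr)(x+a).
\]
Monomials of total degree $p^i+1$ in $x$ can only arise from the $x$-summand of $(x+a)$ multiplied by the term with $p^i(\ell-k)=p^i$, i.e., $k=\ell-1$; the other contribution would require $p^i\mid p^i+1$, which is impossible. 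Hence the coefficient equals $\binom{\ell}{\ell-1}a^{p^i(\ell-1)}=\ell\,a^{p^i(\ell-1)}$.

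Subtracting this expression for $a=\alpha+1$ and $a=\alpha$ gives the asserted form of $A$ (weighted by the scalar $\ell\equiv j\pmod p$), and the $y$-computation is identical by symmetry, the overall minus sign coming from the $-( y+\beta+1)^t+(y+\beta)^t$ part of $f_t(x+\alpha,y+\beta)$.

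There is really no substantial obstacle here: the whole proof is bookkeeping once one observes (i) that $f_t$ is a difference of single-variable polynomials, which rules out mixed monomials, and (ii) that the factorization $(x+a)^{p^i\ell+1}=(x^{p^i}+a^{p^i})^{\ell}(x+a)$ pins down a unique binomial term contributing to degree $p^i+1$. The only point requiring a moment of care is the scalar $j$ (equivalently $\ell\bmod p$), which appears naturally from $\binom{\ell}{\ell-1}=\ell$ and should be kept in mind when the lemma is invoked in Section~\ref{furtheranalysis}.
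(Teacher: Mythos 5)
Your proposal is correct and follows essentially the same route as the paper, which states this lemma without proof because it is just the general formula for $F_{p^i+1}(x,y)$ displayed at the start of Section~\ref{anasing} specialized to Case~(B), where $r=1$ makes $\binom{r}{r-1}=1$ and collapses the exponent to $p^i(\ell-1)$. Your extra from-scratch verification via $(x+a)^{p^i\ell+1}=(x^{p^i}+a^{p^i})^{\ell}(x+a)$ is sound, and you are right to flag the nonzero scalar $j\equiv\ell\pmod p$, which the paper's statement of the lemma silently absorbs but which is harmless for its later uses (e.g.\ in Lemma~\ref{differentfactors}).
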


So the multiplicity is at most $p^i+1$, and it is actually $p^i+1$ unless equations (\ref{eq:5}) and (\ref{eq:6}) hold.

\subsection{Singular Points at Infinity}

Next we consider singular points at infinity.
The projective curve we are working with is $f_t(x,y,z)=[(x+z)^t-x^t-(y+z)^t+y^t]/z$. 
After cancelling terms we get, $f_t(x,y,z)=tx^{t-1}-ty^{t-1}+z(\text{lower order terms})$. 
The next result explains when there are singular points at infinity (the chart where $z=0$).

\begin{lemma}\label{le:NoSingularCaseA}
There are no singular points at infinity in case  (A). 
In case (B.1) $(\alpha, 1, 0)$ is a singular point of 
$f_t(x,y,z)$  if and only if  $\alpha^{t-1}=1$, which is equivalent to  $\alpha^\ell=1.$
\end{lemma}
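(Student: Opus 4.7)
The plan is to locate all points at infinity on the projective curve $f_t(x,y,z)=0$ and then test each one for singularity via partial derivatives in a single affine chart. From the expression $f_t(x,y,z)=tx^{t-1}-ty^{t-1}+z\cdot(\text{lower order terms})$ recorded just above the lemma, intersecting with $z=0$ gives $t(x^{t-1}-y^{t-1})=0$. Since $\gcd(t,p)=1$ by our running assumption, this forces the points at infinity to be of the form $(\alpha:1:0)$ with $\alpha^{t-1}=1$: the only other possibility would be $y=0$, but then $x^{t-1}=0$ forces $x=0$, which is not a projective point.

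To test singularity at such a $P=(\alpha:1:0)$, I pass to the chart $Y\neq 0$ with local coordinates $u=X/Y$, $w=Z/Y$, in which the curve has equation
\[
h(u,w)=\frac{(u+w)^t-u^t-(1+w)^t+1}{w}=\sum_{j=1}^{t}\binom{t}{j}(u^{t-j}-1)\,w^{j-1}.
\]
A direct computation at $w=0$ yields $h(\alpha,0)=t(\alpha^{t-1}-1)$ together with
\[
\frac{\partial h}{\partial u}(\alpha,0)=t(t-1)\alpha^{t-2},\qquad \frac{\partial h}{\partial w}(\alpha,0)=\binom{t}{2}(\alpha^{t-2}-1),
\]
since only the $j=1$ and $j=2$ summands survive after setting $w=0$ in each expression.

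In Case (A) we have $t\not\equiv 1\pmod p$, so $p\nmid t(t-1)$; combined with $\alpha\neq 0$ (which follows from $\alpha^{t-1}=1$, using $t>2$), this makes $\partial h/\partial u(\alpha,0)$ nonzero, so $P$ is smooth, giving the first assertion of the lemma. In Case (B.1) we have $p\mid t-1$ (since $t-1=p^{i}\ell$ with $i\geq 1$), which kills $t-1$ and, because $p$ is odd, also the binomial $\binom{t}{2}=t(t-1)/2$ modulo $p$; both partial derivatives then vanish, and every point $(\alpha:1:0)$ on the curve is singular. Finally, since $t-1=p^{i}\ell$, we have $\alpha^{t-1}=(\alpha^{\ell})^{p^{i}}$, and as the $p^{i}$-th power map is injective on $\overline{\mathbb{F}}_p$, the conditions $\alpha^{t-1}=1$ and $\alpha^{\ell}=1$ are equivalent.

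I do not expect a real obstacle. The only thing to verify carefully is that $(x+z)^t-x^t-(y+z)^t+y^t$ is genuinely divisible by $z$ and that the quotient has degree exactly $t-1$ with leading form $t(x^{t-1}-y^{t-1})$, so that no points at infinity are missed; this is the elementary binomial expansion already implicit in the paragraph preceding the lemma.
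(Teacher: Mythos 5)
Your proposal is correct and follows essentially the same route as the paper: both dehomogenize with respect to $y$ and examine the constant and linear terms of the expansion at $(\alpha,0)$, observing that the point lies on the curve iff $t(\alpha^{t-1}-1)=0$ and that the linear part is governed by $t(t-1)\alpha^{t-2}$ and $\binom{t}{2}$, which vanish mod $p$ exactly when $r=1$. Your explicit identification of all points at infinity and the Frobenius argument for $\alpha^{t-1}=1\Leftrightarrow\alpha^{\ell}=1$ are just slightly more careful versions of what the paper leaves implicit.
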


\begin{proof}
 The dehomogenization of  $f_t(x,y,z)=[(x+z)^t-x^t-(y+z)^t+y^t]/z$ relative to $y$ is $f'_t(x,z)=f_t(x,1,z)/z$.
We compute
 \begin{eqnarray*}
f'_t(x+\alpha,z) &=&[(x+z+\alpha)^t-(x+\alpha)^t-(z+1)^t+1]/z \cr
&=&1/z\bigl[(x+z)^t+\binom{t}{t-1}(x+z)^{t-1}\alpha+\cdots+\binom{t}{1}(x+z)\alpha^{t-1}+\alpha^{t}\\
&-&x^t-\binom{t}{t-1}(x)^{t-1}\alpha-\cdots-\binom{t}{1}(x)\alpha^{t-1}-\alpha^{t}\\
&-&z^t-\binom{t}{t-1}(z)^{t-1}-\binom{t}{t-2}(z)^{t-2}-\cdots-\binom{t}{1}(z)-1 +1\bigr]\\
&=&\binom{t}{1}(\alpha^{t-1}-1)+ \binom{t}{2}[(\alpha^{t-2}-1)z+2\alpha^{t-2}x]\\
&&+ \text{higher order terms.}
\end{eqnarray*}
Notice that the linear part cannot be zero, unless the coefficient $\binom{t}{2}=0$, which implies $r= 0$ or $1$.  Therefore, there are no singular points in  case (A).
And if $r=1$, then $P$ is a singular point iff $\alpha^{t-1}-1=0$.
 \end{proof}

The $'$ notation always refers to expansions at infinity.

\begin{lemma}\label{le: HomogeneousAtInfinity}
In case (B) let  $(\alpha,1,0)$ be a singular point of $g'_t(x,z)$   at infinity.  Then:
$$F'_{p^i-1}=z^{p^i-1}(\alpha^{p^i(\ell-1)+1}-1).
$$
$$F'_{p^i}=z^{p^i}(\alpha^{p^i(\ell-1)}-1)+(xz^{p^i-1}+x^{p^i})\alpha^{p^i(\ell-1)}.$$
\end{lemma}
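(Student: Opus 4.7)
\bigskip
\noindent\emph{Proof plan.}

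The plan is to expand $g'_t(x+\alpha, z)$ around the point $(0,0)$ by first expanding $f'_t(x+\alpha, z)$ and then dividing by $x+\alpha-1$. Since $(\alpha,1,0)$ is a singular point of $g'_t$ at infinity with $\alpha^{t-1}=1$, and (by the standing assumption that the point is distinct from $(1,1,0)$) we may take $\alpha\neq 1$, so $x+\alpha-1$ is a unit in the local ring at $(0,0)$ and division by it preserves the singularity structure up to a nonzero scalar. Thus the homogeneous components of $g'_t$ at $(\alpha,0)$ are determined (up to nonzero constants) by those of $f'_t$, so it suffices to extract the degrees $p^i-1$ and $p^i$ parts of
\[
f'_t(x+\alpha,z)=\frac{1}{z}\bigl[(x+\alpha+z)^t-(x+\alpha)^t-(1+z)^t+1\bigr].
\]

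The key step is to apply the Frobenius identities available in Case (B). Writing $t=p^i\ell+1$,
\[
(x+\alpha+z)^t=\bigl((x+\alpha)^{p^i}+z^{p^i}\bigr)^{\ell}(x+\alpha+z),\qquad (1+z)^t=(1+z^{p^i})^{\ell}(1+z),
\]
while $(x+\alpha)^t=(x+\alpha)^{p^i\ell}(x+\alpha)$. Expanding the two binomials and subtracting, the $k=0$ pieces combine to produce $\bigl((x+\alpha)^{p^i\ell}-1\bigr)\cdot z$, which cancels the $1/z$ to yield $(x+\alpha)^{p^i\ell}-1$. For $k\geq 1$ one obtains a sum
\[
\sum_{k=1}^{\ell}\binom{\ell}{k}\,z^{p^ik-1}\Bigl[(x+\alpha+z)(x+\alpha)^{p^i(\ell-k)}-(1+z)\Bigr].
\]
The singular-point equation $\alpha^{p^i\ell}=1$ kills the constant term in $(x+\alpha)^{p^i\ell}-1$, so the lowest-degree $x$-term in that factor is $\ell\alpha^{p^i(\ell-1)}x^{p^i}$, of degree $p^i$. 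For $k\geq 2$ the factor $z^{p^ik-1}$ already has degree $\geq 2p^i-1>p^i$, so those terms only affect components of degree strictly greater than $p^i$ and may be discarded.

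Consequently the degree $p^i-1$ and degree $p^i$ parts of $f'_t(x+\alpha,z)$ are extracted entirely from the $k=1$ summand together with the remaining piece $(x+\alpha)^{p^i\ell}-1$. Expanding $(x+\alpha)^{p^i(\ell-1)}=(x^{p^i}+\alpha^{p^i})^{\ell-1}$ and keeping only terms of total degree $\leq 1$ in $x$ (to get contributions up to degree $p^i$ after multiplication by $z^{p^i-1}$) gives constant-in-$x$ piece $\alpha^{p^i(\ell-1)}$ and no linear piece. A short computation then yields
\[
\text{(deg }p^i-1\text{)}=\ell(\alpha^{p^i(\ell-1)+1}-1)\,z^{p^i-1},\qquad
\text{(deg }p^i\text{)}=\ell\alpha^{p^i(\ell-1)}(x^{p^i}+xz^{p^i-1})+\ell(\alpha^{p^i(\ell-1)}-1)z^{p^i},
\]
which, after division by $x+\alpha-1$ and absorbing the resulting nonzero scalar $\ell/(\alpha-1)$, gives precisely the claimed expressions for $F'_{p^i-1}$ and $F'_{p^i}$ of $g'_t$ at $(\alpha,0)$.

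The main technical obstacle is keeping track of which terms in the Frobenius-binomial expansion can possibly contribute to the two lowest nontrivial homogeneous components: once it is observed that the $k\geq 2$ summands are automatically of degree $\geq 2p^i-1$, the bookkeeping reduces to expanding the single summand $\ell\, z^{p^i-1}\bigl[(x+\alpha+z)(x+\alpha)^{p^i(\ell-1)}-(1+z)\bigr]$ through linear order in $x$, together with the first Frobenius correction $\ell\alpha^{p^i(\ell-1)}x^{p^i}$ from $(x+\alpha)^{p^i\ell}-1$. All simplifications are driven by the singularity hypothesis $\alpha^{p^i\ell}=1$.
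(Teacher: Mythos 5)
Your core computation is correct and follows essentially the same route as the paper: write $t=p^i\ell+1$, use the Frobenius identity to expand the $t$-th powers binomially, observe that only the $k=1$ term of the expansion (together with the piece $(x+\alpha)^{p^i\ell}-1$, whose constant term dies by the singularity condition $\alpha^{p^i\ell}=1$) can contribute in degrees $p^i-1$ and $p^i$, and read off the two lowest homogeneous parts. The paper groups the expansion as $\bigl((x+z)^{p^i}+\alpha^{p^i}\bigr)^{\ell}(x+z+\alpha)$ while you group it as $\bigl((x+\alpha)^{p^i}+z^{p^i}\bigr)^{\ell}(x+\alpha+z)$; this is a cosmetic difference, and your observation that the $k\geq 2$ terms have degree at least $2p^i-1>p^i$ is the same degree bookkeeping the paper performs implicitly. (Both you and the paper's own proof produce an overall factor $\ell\equiv j\pmod p$ that the lemma's statement silently drops; this is harmless.)

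The one step you should not make is the final one. In the paper's fixed notation $F'_{p^i-1},F'_{p^i}$ are the homogeneous components of the expansion of $f'_t$ at the point at infinity, not of $g'_t$, so the displayed formulas are exactly what your computation of $f'_t(x+\alpha,z)$ already gives and no division by $x+\alpha-1$ is required. More importantly, the division as you describe it is not legitimate for the second component: if $f'_t=(x+\alpha-1)\,g'_t$ with $g'_t=G'_{m}+G'_{m+1}+\cdots$ and $m=p^i-1$, then $F'_{m}=(\alpha-1)G'_{m}$ but $F'_{m+1}=(\alpha-1)G'_{m+1}+x\,G'_{m}$, so $G'_{p^i}$ differs from $\frac{1}{\alpha-1}F'_{p^i}$ by the additive correction $-\frac{x}{(\alpha-1)^2}F'_{p^i-1}$, which is nonzero precisely at the type III.B points where $\alpha^{p^i(\ell-1)+1}\neq 1$. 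Dividing by a non-constant unit rescales only the lowest component; it mixes the higher ones. The paper handles this passage from $F'$ to $G'$ separately (Remark \ref{remw1}), which is why the lemma is stated for $F'$. Dropping your final sentence and stating the conclusion for $f'_t$ makes the argument correct as written.
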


\begin{proof}

$$f_t(x+\alpha+1,z)=1/z[(x+z+\alpha)^t-(x+\alpha)^t-(z+1)^t+1]=$$
$$1/z[((x+z)^{p^i\ell+1}+j(x+z)^{p^i(\ell-1)+1}\alpha^{p^i}+\cdots+j(x+z)^{p^i+1}\alpha^{p^i(\ell-1)}+(x+z)\alpha^{p^i\ell}+$$
$$(x+z)^{p^i\ell}\alpha+j(x+z)^{p^i(\ell-1)}\alpha^{p^i+1}+\cdots+j(x+z)^{p^i}\alpha^{p^i(\ell-1)+1}+\alpha^{p^i\ell+1}$$
$$-(x)^{p^i\ell+1}-j(x)^{p^i(\ell-1)+1}\alpha^{p^i}-\cdots-j(x)^{p^i+1}\alpha^{p^i(\ell-1)}-(x)\alpha^{p^i\ell}$$
$$-(x)^{p^i\ell}\alpha-j(x)^{p^i(\ell-1)}\alpha^{p^i+1}-\cdots-j(x)^{p^i}\alpha^{p^i(\ell-1)+1}-\alpha^{p^i\ell+1}$$
$$-(z)^{p^i\ell+1}-j(z)^{p^i(\ell-1)+1}-\cdots-j(z)^{p^i+1}-(z)$$
$$-(z)^{p^i\ell}-j(z)^{p^i(\ell-1)}-\cdots-(z)^{p^i}-1+1)].$$

Therefore we get:
$$F'_{p^i-1}=jz^{p^i-1}(\alpha^{p^i(\ell-1)+1}-1).
$$
$$F'_{p^i}=jz^{p^i}(\alpha^{p^i(\ell-1)}-1)+j(xz^{p^i-1}+x^{p^i})\alpha^{p^i(\ell-1)}.$$
\end{proof}

\begin{lemma}
In case (B) let  $(\alpha,1,0)$ be a singular point of $g'_t(x,z)$   at infinity.  Then the multiplicity at $P$ is $p^i$ if $\alpha\in \mathbb{F}_{p^i}$, and is $p^i-1$ otherwise.
\end{lemma}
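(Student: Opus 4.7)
The plan is to compute $m_P(f'_t)$ from the expansion $f'_t(x+\alpha,z)=F'_0+F'_1+\cdots$ and then transfer the answer to $g'_t$. Since $f'_t=(x-1)\,g'_t$ and $x-1$ has constant term $\alpha-1$ in local coordinates at $P$, this factor is a unit in the local ring at $P$ whenever $\alpha\ne 1$, giving $m_P(g'_t)=m_P(f'_t)$; the exceptional case $\alpha=1$ (the point $(1,1,0)$) would need a short separate treatment and is not our concern here.

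I first verify that $F'_k=0$ for every $k<p^i-1$. Starting from
\[
f'_t(x+\alpha,z)=\sum_{k=1}^{t-1}\binom{t}{k}\bigl[(x+\alpha)^{t-k}-1\bigr]\,z^{k-1},
\]
Lucas's theorem applied to $t=p^i\ell+1$ (whose base-$p$ digits are $1$ at position $0$, zeros in positions $1,\ldots,i-1$, and the digits of $\ell$ from position $i$ onward) forces $\binom{t}{k}\equiv 0\pmod p$ for every $2\le k\le p^i-1$. The only remaining term of the sum that could contribute a monomial of degree $<p^i-1$ is $k=1$, i.e.\ $(x+\alpha)^{t-1}-1$; its constant equals $\alpha^{p^i\ell}-1=0$ (since $\alpha^\ell=1$), and by Lucas $\binom{t-1}{u}=\binom{p^i\ell}{u}\equiv 0\pmod p$ unless $p^i\mid u$, so its lowest nonzero $x$-monomial has degree $p^i$. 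Hence the first candidate for a nonzero form is $F'_{p^i-1}$ as recorded in the preceding lemma.

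Using $\alpha^{p^i\ell}=1$, the preceding lemma gives $F'_{p^i-1}=j\,z^{p^i-1}(\alpha^{1-p^i}-1)$, which vanishes precisely when $\alpha^{p^i-1}=1$, i.e.\ when $\alpha\in\mathbb F_{p^i}$. If $\alpha\notin\mathbb F_{p^i}$ this immediately yields $m_P(f'_t)=p^i-1$. If instead $\alpha\in\mathbb F_{p^i}$, substituting $\alpha^{p^i}=\alpha$ into the expression for $F'_{p^i}$ reduces it to
\[
F'_{p^i}=\tfrac{j}{\alpha}\bigl[(1-\alpha)z^{p^i}+xz^{p^i-1}+x^{p^i}\bigr],
\]
whose coefficient of $x^{p^i}$ is $j/\alpha\ne 0$; so $F'_{p^i}\ne 0$ and $m_P(f'_t)=p^i$. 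The main obstacle is the Lucas bookkeeping in the middle paragraph: once one sees both that $\binom{t}{k}$ vanishes for $2\le k\le p^i-1$ and that $\binom{p^i\ell}{u}$ vanishes for $0<u<p^i$, everything else is direct substitution together with the ``unit in the local ring'' remark that relates $m_P(g'_t)$ and $m_P(f'_t)$.
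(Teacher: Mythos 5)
Your proof is correct and follows essentially the same route as the paper: both arguments reduce to the observation that the lowest possible nonzero form is $F'_{p^i-1}=jz^{p^i-1}(\alpha^{p^i(\ell-1)+1}-1)$, which vanishes precisely when $\alpha^{p^i-1}=1$, with $F'_{p^i}$ then nonzero because of its $x^{p^i}$ term. You merely make explicit some steps the paper leaves implicit in the preceding lemma's expansion (the Lucas-theorem check that $F'_k=0$ for $k<p^i-1$, the nonvanishing of $F'_{p^i}$, and the passage from $f'_t$ to $g'_t$ via the unit factor $x-1$ when $\alpha\neq 1$), which is a welcome clarification rather than a different method.
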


\begin{proof}

Since $P=(\alpha,1,0)$ is a singular point at the infinity we know that $\alpha^{\ell}=1$. Moreover $F_{p^i}=0 $ iff $\alpha^{p^i(\ell-1)+1}-1=0$. 
Multiplying the last equation by $\alpha^{p^i}$ we equivalently obtain $\alpha^{p^i(\ell)+1}-\alpha^{p^i}=0\Leftrightarrow \alpha^{p^i-1}=1 \Leftrightarrow
\alpha\in \mathbb{F}_{p^i}$. 
This completes the proof.
\end{proof}

\subsection{The Multiplicities}

Next we pin down the multiplicities of these singular points
$P=(\a,\b)$ on $f_t(x,y)$, and how things change for $g_t(x,y)$.
Recall that the $'$ notation always refers to expansions at infinity.

We classify the points into the following types:
\begin{itemize}
\item[(I)]  $\alpha^{d}=\beta^{d}$, i.e., $F_{p^i}=0$.
\item[(II)] $\alpha^{d}\neq \beta^{d}$, i.e., $F_{p^i}\neq 0$.
\item[(III)] $(\alpha,1,0)$ is a singular point at  infinity.
\begin{itemize}
\item[(III.A)] $\alpha\in \mathbb{F}_{p^i}$, i.e., $F'_{p^i-1}=0$.
\item[(III.B)]$\alpha\notin \mathbb{F}_{p^i}$, i.e., $F'_{p^i-1}\neq 0$.
\end{itemize}
\end{itemize}

We note that
if $\ell=1$ and $i>1$, the only singular point is $(-2,-2)$.

Defining $w(x,y):=x-y$ we note the following
multiplicities on $w$:  $m_P(w)=1$ if $P=(\alpha,\alpha)$ (Type II), 
and $m_P(w)=0$ for all other singular points $P=(\a,\b)$.
We now have the multiplicities for $g_t(x,y)$.
 
\begin{center}
\begin{tabular}{|l | c| c | c|}
\hline
Type & Number of Points & $m_P(f_k)$ & $m_P(g_t)$ \\
\hline
I & $N_1 \leq (\ell/3-1)^2$  if $gcd(\ell,p^i-1)< \ell$ & $p^{i}+1$ &  $\leq p^{i}+1$\\
& $N_1 \leq (\ell-1)^2$  if $gcd(\ell,p^i-1)= \ell$ &  & \\
\hline
II & $ \leq (\ell-1)^2-N_1$ & $ p^i$ & $\leq p^i$\\
\hline
III.A & $N_2 \leq \ell$ & $ p^i$ & $\leq p^i$\\
\hline
III.B & $\ell-N_2$ & $ p^i-1$ & $\leq p^i-1$\\
\hline
\end{tabular}
\end{center}

\subsection{Further Analysis}\label{furtheranalysis}

Next we need some further analysis of the singularities and the
homogeneous components.

The following result is clear, because we are in characteristic $p$.

\begin{lemma}\label{repeatedLine}
$F_{p^i}=(\s x-\t y)^{p^i}$ where 
$\s^{p^i}=((\alpha+1)^{p^i(\ell-1)+1}-\alpha^{p^i(\ell-1)+1})$
and $\t^{p^i}=((\beta+1)^{p^i(\ell-1)+1}-\beta^{p^i(\ell-1)+1})$. $F'_{p^i}=(Uz)^{p^i}$ where $U^{p^i}=j(\alpha^{p^i(\ell-1)+1}-1)$.
\end{lemma}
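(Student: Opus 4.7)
The plan is to derive both equalities as direct applications of the Frobenius endomorphism in characteristic $p$. Recall the three relevant facts: for any elements of a characteristic-$p$ ring, $(a+b)^{p^i}=a^{p^i}+b^{p^i}$; the field $\overline{\mathbb{F}}_p$ is perfect, so every element has a unique $p^i$-th root; and since $p$ is odd, $p^i$ is odd, whence $(-1)^{p^i}=-1$. Combining these gives $(ax-by)^{p^i}=a^{p^i}x^{p^i}-b^{p^i}y^{p^i}$ for any scalars $a,b\in\overline{\mathbb{F}}_p$ and indeterminates $x,y$.

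For the first identity, I would substitute $r=1$ (the Case $(B)$ setting) into the explicit formula for $F_{p^i}(x,y)$ listed among the homogeneous components of $f_t(x+\alpha,y+\beta)$ just before Lemma \ref{pro: NoZeroF1F2}. This produces $F_{p^i}(x,y)=jAx^{p^i}-jBy^{p^i}$, where $A=(\alpha+1)^{p^i(\ell-1)+1}-\alpha^{p^i(\ell-1)+1}$ and $B=(\beta+1)^{p^i(\ell-1)+1}-\beta^{p^i(\ell-1)+1}$. Taking $\sigma,\tau\in\overline{\mathbb{F}}_p$ to be the unique $p^i$-th roots of these coefficients (the nonzero scalar $j$ is silently absorbed into the roots by choosing some $p^i$-th root of $j$ in $\overline{\mathbb{F}}_p$), Frobenius yields $(\sigma x-\tau y)^{p^i}=\sigma^{p^i}x^{p^i}-\tau^{p^i}y^{p^i}=F_{p^i}(x,y)$, proving the first claim.

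For the second identity, I would apply the same Frobenius trick to the coefficient $j(\alpha^{p^i(\ell-1)+1}-1)$ arising from the computation of the relevant leading homogeneous component at infinity in Lemma \ref{le: HomogeneousAtInfinity} (using the singular-point identity $\alpha^\ell=1$ to convert between the exponents $p^i(\ell-1)$ and $p^i(\ell-1)+1$ where needed). Choosing $U$ to be the unique $p^i$-th root of this scalar in $\overline{\mathbb{F}}_p$, Frobenius immediately delivers $(Uz)^{p^i}=U^{p^i}z^{p^i}$, matching the claimed form.

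The proof is essentially formal once the formulas from Section \ref{anasing} are in hand; the only obstacle is the coefficient bookkeeping required to reconcile the expressions computed earlier with the Frobenius-friendly form in the statement, together with the tacit absorption of the scalar $j$ into the $p^i$-th roots $\sigma,\tau,U$.
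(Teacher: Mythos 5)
Your treatment of the first identity is correct and is essentially all the paper intends: the paper offers no argument beyond the remark that the result is clear in characteristic $p$, and your fleshed-out version (substitute $r=1$ into the displayed formula for $F_{p^i}$, take $p^i$-th roots of the two coefficients in the perfect field $\overline{\mathbb{F}}_p$, apply Frobenius) is the right one. You are also right to flag the stray factor $j$: the displayed expansion gives $F_{p^i}=j\bigl(\sigma^{p^i}x^{p^i}-\tau^{p^i}y^{p^i}\bigr)$ with $\sigma,\tau$ as in the statement, so the lemma holds only up to the nonzero scalar $j$ (or after redefining $\sigma^{p^i},\tau^{p^i}$ to absorb $j$, as you do); this is harmless for every later use, which only needs $F_{p^i}$ to be a $p^i$-th power of a single linear form $L$.

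The second identity is where your argument breaks down, and no amount of Frobenius bookkeeping will repair it as stated. By Lemma \ref{le: HomogeneousAtInfinity} (whose proof carries the factor $j$), one has $F'_{p^i}=jz^{p^i}(\alpha^{p^i(\ell-1)}-1)+j(xz^{p^i-1}+x^{p^i})\alpha^{p^i(\ell-1)}$. Since $\alpha\neq 0$ at a singular point at infinity, the coefficient $j\alpha^{p^i(\ell-1)}$ of $x^{p^i}$ is nonzero, so $F'_{p^i}$ genuinely depends on $x$ and cannot equal $(Uz)^{p^i}$ for any scalar $U$; indeed Lemma \ref{differentfactors} requires $F'_{p^i}$ to have $p^i$ \emph{distinct} linear factors, which is incompatible with its being a repeated line. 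Your proposed fix --- invoking $\alpha^{\ell}=1$ to ``convert between the exponents'' --- does not help: $\alpha^{p^i(\ell-1)+1}=\alpha\cdot\alpha^{p^i(\ell-1)}$, so the two coefficients differ by the factor $\alpha\neq 1$ in general, and in any case the $x$-terms do not vanish. The quantity $j(\alpha^{p^i(\ell-1)+1}-1)$ that the lemma assigns to $U^{p^i}$ is in fact the coefficient of $z^{p^i-1}$ in $F'_{p^i-1}$; the statement is evidently an index-shifted version of the (true and immediate) fact that the leading form $F'_{p^i-1}=j(\alpha^{p^i(\ell-1)+1}-1)\,z^{p^i-1}$ at a point of type III.B is a power of the single line $z$, in parallel with $F_{p^i}=L^{p^i}$ at affine points. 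A correct write-up should prove that statement --- which needs no Frobenius at all, only a reading of Lemma \ref{le: HomogeneousAtInfinity} --- and note the misprint, rather than assert that the displayed $F'_{p^i}$ collapses to $(Uz)^{p^i}$.
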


\begin{lemma}\label{differentfactors}
$F_{p^i+1}$ (resp $F'_{p^i}$) consists of $p^i+1$ (resp $p^i$) different linear factors.
\end{lemma}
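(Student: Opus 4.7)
The plan is to treat $F_{p^i+1}$ and $F'_{p^i}$ separately, relying in each case on the explicit formula for the form together with the arithmetic constraints satisfied by the relevant singular point. The affine assertion concerns a Type I point $P=(\alpha,\beta)$ in Case (B), where $F_{p^i}=0$ and so $F_{p^i+1}$ plays the role of the tangent cone; the assertion at infinity concerns a Type III.A point $(\alpha,1,0)$, where $F'_{p^i-1}=0$ and $F'_{p^i}$ is the tangent cone.

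For $F_{p^i+1}=Ax^{p^i+1}-By^{p^i+1}$, with $A=(\alpha+1)^{p^i(\ell-1)}-\alpha^{p^i(\ell-1)}$ and $B$ the analogous expression in $\beta$, the decisive step is to verify $A\neq 0$ (and symmetrically $B\neq 0$). Equation (\ref{eq:2}) gives $\bigl((\alpha+1)/\alpha\bigr)^{\ell}=1$. If $A=0$, then $\bigl((\alpha+1)/\alpha\bigr)^{p^i(\ell-1)}=1$, and since the $p^i$-th power map is injective on $\overline{\mathbb{F}}_p$, this reduces to $\bigl((\alpha+1)/\alpha\bigr)^{\ell-1}=1$. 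Dividing the two relations forces $(\alpha+1)/\alpha=1$, a contradiction. Given $A,B\neq 0$, I would factor $Ax^{p^i+1}-By^{p^i+1}=A\prod_{k}(x-c_k y)$, where the $c_k$ range over the $(p^i+1)$-th roots of $B/A$; since $\gcd(p^i+1,p)=1$ and $B/A\neq 0$, the polynomial $T^{p^i+1}-B/A$ has nonzero derivative and nonzero constant term, hence $p^i+1$ distinct roots. Therefore $F_{p^i+1}$ splits into $p^i+1$ different linear factors.

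For $F'_{p^i}=jz^{p^i}(\alpha^{p^i(\ell-1)}-1)+j(xz^{p^i-1}+x^{p^i})\alpha^{p^i(\ell-1)}$ at a Type III.A point, I would use that $\alpha\in\mathbb{F}_{p^i}^{*}$ (note $\alpha\neq 0$ since $\alpha^{\ell}=1$), which yields $\alpha^{p^i}=\alpha$ and hence $\alpha^{p^i(\ell-1)}=\alpha^{\ell-1}=\alpha^{-1}$. Substituting and setting $u=x/z$ gives
\[F'_{p^i}=j\alpha^{-1}z^{p^i}\,\varphi(u),\qquad \varphi(u):=u^{p^i}+u+(1-\alpha).\]
Since $\varphi'(u)=1\neq 0$ in characteristic $p$, $\varphi$ is separable and has $p^i$ distinct roots $\mu_1,\ldots,\mu_{p^i}\in\overline{\mathbb{F}}_p$. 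Consequently $F'_{p^i}=j\alpha^{-1}\prod_{k=1}^{p^i}(x-\mu_k z)$ is a product of $p^i$ different linear forms.

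The only real obstacle is the nonvanishing of $A$ at a Type I singular point: a priori this coefficient could vanish, and the argument requires playing the $\ell$-th and the $(\ell-1)$-th power conditions on $(\alpha+1)/\alpha$ against each other. Once this is cleared, the remainder reduces to standard separability facts about $T^{p^i+1}-c$ and about the Artin--Schreier-like polynomial $u^{p^i}+u+c$ in characteristic $p$, both of which follow from immediate derivative computations.
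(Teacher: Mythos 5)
Your core strategy is the same as the paper's: dehomogenize and use a derivative/separability computation to rule out repeated roots. Your treatment of $F_{p^i+1}$ is in fact slightly more complete than the paper's own proof, because you explicitly verify that $A\neq 0$ and $B\neq 0$ by playing the relation $\bigl((\alpha+1)/\alpha\bigr)^{\ell}=1$ (equation (\ref{eq:2}), which holds at every affine singular point in Case (B), not only at Type I points) against $\bigl((\alpha+1)/\alpha\bigr)^{\ell-1}=1$; the paper's proof tacitly assumes this nonvanishing when it asserts that $x=0$ is not a root of $h$. Note only that you did not need to frame this part as being about Type I points: the lemma is also invoked at Type II points (e.g.\ in Remark \ref{remw1}), and your argument covers them without change since it uses only (\ref{eq:2}).

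There is, however, one genuine gap in the part at infinity. You prove the statement for $F'_{p^i}$ only at Type III.A points, where $\alpha\in\mathbb{F}_{p^i}$ lets you substitute $\alpha^{p^i}=\alpha$ and reduce to $u^{p^i}+u+(1-\alpha)$. But the lemma is stated for an arbitrary singular point $(\alpha,1,0)$ at infinity, and it is used precisely at Type III.B points (where $\alpha\notin\mathbb{F}_{p^i}$) in Remark \ref{remw1} and Lemma \ref{IntersectionTypeIII.B} to conclude $\gcd(G'_{p^i-1},G'_{p^i})=1$; your substitution is unavailable there, so as written your proof does not establish the case that is actually needed downstream. The repair is immediate and does not require knowing whether $\alpha$ lies in $\mathbb{F}_{p^i}$: from Lemma \ref{le: HomogeneousAtInfinity}, $h(x):=F'_{p^i}(x,1)=j\bigl(\alpha^{p^i(\ell-1)}-1\bigr)+j\alpha^{p^i(\ell-1)}\bigl(x+x^{p^i}\bigr)$ has derivative the nonzero constant $j\alpha^{p^i(\ell-1)}$ (nonzero because $\alpha^{\ell}=1$ forces $\alpha\neq 0$), and its leading coefficient is that same nonzero constant, so $h$ has exactly $p^i$ distinct roots and $F'_{p^i}$ splits into $p^i$ distinct linear forms.
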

\begin{proof}
We have seen that $F_{p^i+1}=A x^{p^i+1}-B y^{p^i+1}$. 
Consider $h(x)=F_{p^i+1}(x,1)=A x^{p^i+1}-B$. If $h(x)$ has a repeated root at $a$ then $h'(a)=0$. Consider the derivative $h'(x)=A x^{p^i}$ which is never zero only for $x=0$ which is not a root of $h$ therefore there are no
repeated factors in $F_{p^i+1}$. Same arguments may be used to prove the result for $F'_{p^i}$.
\end{proof}

Next we make a crucial observation for our proofs.
For the rest of this paper, we let $L=\s x-\t y$, so that $F_{p^i}=L^{p^i}$.
Suppose $g_t(x,y)=u(x,y)v(x,y)$, and suppose that the Taylor expansion at
a singular point $P=(\a,\b)$ is
\[
u(x+\a, y+\b)=L^{r_1}+u_1, \  v(x+\a,y+\b)=L^{r_2}+v_1
\]
where wlog $r_1\leq r_2$.
Then $F_{p^{i}+1}=L^{r_1}(v_1+L^{r_2-r_1}u_1)$. 
From Lemma \ref{differentfactors} we deduce the following.

\begin{lemma}\label{multp0o1}
With the notation of the previous paragraph,
\begin{itemize}
\item[(i)] Either $r_1=1$ or $r_1=0$.
\item[(ii)]  If $r_1=1$ then $gcd(L,v_1+L^{r_2-r_1}u_1)=1$.
\end{itemize}
\end{lemma}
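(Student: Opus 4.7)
My plan is to derive both parts from the squarefreeness of the $(p^i+1)$-th homogeneous component of $f_t(x+\alpha,y+\beta)$ given by Lemma~\ref{differentfactors}, using the identity $f_t = (x-y)\cdot u\cdot v$ to translate this into a constraint on the $uv$-formula $L^{r_1}(v_1+L^{r_2-r_1}u_1)$ stated in the setup. Write $w:=x-y$ and $c:=\alpha-\beta$; at the singularity under consideration one has $F_{p^i}=L^{p^i}\neq 0$, which forces $\alpha\neq\beta$ and hence $c\neq 0$. Expanding
\[
f_t(x+\alpha,y+\beta) \;=\; (w+c)(L^{r_1}+u_1)(L^{r_2}+v_1)
\]
and extracting the degree-$(p^i+1)$ part yields, in the shorthand where $u_1,v_1$ now stand for the genuinely lowest-degree (degrees $r_1+1$ and $r_2+1$) homogeneous parts of the respective tails,
\[
F_{p^i+1} \;=\; wL^{p^i} + cL^{r_1}(v_1 + L^{r_2-r_1}u_1) \;=\; L^{r_1}\bigl[wL^{r_2} + c(v_1 + L^{r_2-r_1}u_1)\bigr].
\]

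For part (i), this identity shows that $L^{r_1}$ divides $F_{p^i+1}$. Since Lemma~\ref{differentfactors} tells us $F_{p^i+1}$ splits into $p^i+1$ distinct linear factors and is in particular squarefree, necessarily $r_1\leq 1$.

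For part (ii), assume $r_1=1$, so $r_2 = p^i-1\geq 2$ since $p\geq 3$. Writing $F_{p^i+1}=L\cdot M$ with $M := wL^{r_2} + c(v_1 + L^{r_2-1}u_1)$, squarefreeness forces $L\nmid M$. Because $r_2\geq 2$, both summands $wL^{r_2}$ and $cL^{r_2-1}u_1$ are divisible by $L$, so $L\nmid M$ collapses to $L\nmid cv_1$, i.e.\ $L\nmid v_1$. Consequently $v_1 + L^{r_2-r_1}u_1 \equiv v_1\not\equiv 0\pmod L$, which gives $\gcd(L,\, v_1 + L^{r_2-r_1}u_1)=1$.

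No serious obstacle is anticipated; once the triple-product expansion is set up, the squarefreeness argument runs immediately. The one point to watch is the dual role of the symbols $u_1,v_1$: in the lemma's setup they stand for the whole Taylor tails, whereas in the displayed identity above they denote their lowest homogeneous pieces. One should also check that the marginal case $r_2=1$ (forcing $p^i=2$) never arises, which is automatic in odd characteristic.
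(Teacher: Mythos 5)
Your argument is essentially the paper's own: the paper's entire proof consists of the displayed identity $F_{p^i+1}=L^{r_1}(v_1+L^{r_2-r_1}u_1)$ together with the squarefreeness of $F_{p^i+1}$ from Lemma \ref{differentfactors}, and you reconstruct exactly that --- in fact more carefully, since you keep track of the factor $x-y$ relating $f_t$ to $g_t$, which the paper's displayed formula silently suppresses and which is only accounted for in Remark \ref{remw1}. Granting the reduction to $\alpha\neq\beta$, both parts follow correctly as you argue. The one step I would not let stand as written is the claim that $F_{p^i}=L^{p^i}\neq 0$ forces $\alpha\neq\beta$. At a singular point on the diagonal one computes $F_{p^i}=jA\,(x-y)^{p^i}$ with $A=(\alpha+1)^{p^i(\ell-1)+1}-\alpha^{p^i(\ell-1)+1}$, and $A$ vanishes only when $(1+1/\alpha)^{d}=1$ with $d=\gcd(\ell,p^i-1)$; so in case (B.1), where $d<\ell$, there do exist diagonal singular points with $F_{p^i}\neq 0$. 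This does not sink the lemma: at such a point $c=0$, $w=x-y$ is proportional to $L$, and $r_1+r_2=p^i-1$, so your expansion gives $F_{p^i+1}=wL^{r_1}(\tilde v_1+L^{r_2-r_1}\tilde u_1)$, whence $L^{r_1+1}$ divides $F_{p^i+1}$ and squarefreeness forces $r_1=0$, so that (i) holds and (ii) is vacuous. You should either add this case explicitly or state that the lemma is only ever invoked at singular points off the line $x=y$.
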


 \bigskip

We next make two quick remarks to aid us in moving between
$f_t(x,y)$ and $g_t(x,y)$.
Suppose that $P=(\a,\b)\neq(1,1)$ is a singular point of $g_t(x,y)$
such that $F_{p^i}(x,y)\neq0$ at $P$ (type II).
We will need to know the greatest
common divisor $(G_{m}(x,y),G_{m+1}(x,y))$ where $m=m_P(g_t)$.
This can be found from $(F_{p^i}(x,y),F_{p^i+1}(x,y))$ as follows.

Again letting $w(x,y)=x-y$,
we have
            $$f_t(x+\a,y+\b)=w(x+\a,y+\b)g_t(x+\a,y+\b),$$
and so
$$         F_{p^i}(x,y)+F_{p^i+1}(x,y)+\cdots
          = (W_0+W_1(x,y))(G_m(x,y)+G_{m+1}(x,y)+\cdots).
$$
where polynomials with subscript $i$ are 0 or homogeneous of degree $i$.

\begin{remark}\label{remw1}
We get:

If $W_0\neq 0$, i.e., $\a\neq\b$ 
\begin{eqnarray}
F_{p^i}&=&W_0 G_{p^i} = (\sigma x+\tau y)^{2^i} \cr
F_{p^i+1}&=&W_1 G_{p^i}+W_0 G_{p^i+1} 
\end{eqnarray}
then it follows from these equations that
$(F_{2^i},F_{2^i+1})=(G_{2^i},G_{2^i+1})$.

If $W_0= 0$, i.e., $\a=\b$, 
\begin{eqnarray}
F_{p^i}&=&W_1 G_{p^i-1} = (\sigma x+\tau y)^{2^i} \cr
F_{p^i+1}&=&W_1 G_{p^i}  .
\end{eqnarray}

it is clear that (up to scalars) $W_1 =\sigma x-\tau y$,
and so $(F_{p^i},F_{p^i+1})= \sigma x-\tau y$ because
$F_{p^i+1}(x,y)$ has distinct linear factors (Lemma \ref{differentfactors}).
Hence $(G_{p^i-1},G_{p^i})=1$. 
The same result is true for the points at infinity of type $III.B$, i.e.,
$(G'_{p^i-1},G'_{p^i})=1$.
\end{remark}

\section{Intersection Multiplicity}\label{anaintersect}

In this section we  compute 
and give bounds on the intersection multiplicities.

\begin{lemma}\label{GCD=1}
 Let $h(x,y)$ be an affine curve.
Write $h(x+\alpha,y+\beta)=H_m+H_{m+1}+\cdots$ where
$P=(\alpha,\beta)$ is a point on  $h(x,y)$ of multiplicity $m$.
Suppose that  $H_m$ and $H_{m+1}$  are relatively prime,
and that there is only one tangent direction at $P$.
If $h=uv$ is reducible, then $I(P,u,v)=0$.
 \end{lemma}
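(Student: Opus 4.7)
The plan is to extract the local structure from the hypothesis of a unique tangent direction, then read off a contradiction from the hypothesis $\gcd(H_m,H_{m+1})=1$ if both factors $u,v$ were to pass through $P$.

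First, since $H_m$ has only one tangent direction at $P$, and we are working over the algebraic closure, there is a linear form $L=\sigma x-\tau y$ and a nonzero constant $c$ with $H_m=cL^m$. Next, Taylor-expand the putative factors at $P$:
$$u(x+\alpha,y+\beta)=U_a+U_{a+1}+\cdots,\qquad v(x+\alpha,y+\beta)=V_b+V_{b+1}+\cdots,$$
where $a:=m_P(u)\ge 0$, $b:=m_P(v)\ge 0$ are the (possibly zero) multiplicities, and $U_a,V_b$ are nonzero homogeneous pieces of those degrees. Multiplying the two expansions and collecting the lowest-degree term gives $a+b=m$ and $H_m=U_aV_b$. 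Since $H_m=cL^m$ is (up to scalar) a pure power of the single linear form $L$, unique factorization in $\overline{\mathbb{F}}_p[x,y]$ forces $U_a=c_1L^a$ and $V_b=c_2L^b$ for some constants $c_1,c_2\in\overline{\mathbb{F}}_p^\times$.

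Now compare the next homogeneous parts. Matching degree $m+1$ in the product $uv=h$ yields
$$H_{m+1}=U_aV_{b+1}+U_{a+1}V_b=c_1L^a V_{b+1}+c_2L^b U_{a+1}.$$
Suppose, for contradiction, that $I(P,u,v)\ge 1$. By the standard characterization of intersection multiplicity (recalled before Corollary \ref{DifferentTangentCone}), this forces both $u$ and $v$ to vanish at $P$, i.e.\ $a\ge 1$ and $b\ge 1$. But then $L$ divides each summand in the displayed expression, hence $L\mid H_{m+1}$. Since $L\mid H_m$ as well, we obtain $L\mid\gcd(H_m,H_{m+1})$, contradicting the hypothesis that $H_m$ and $H_{m+1}$ are coprime.

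Therefore one of $a,b$ must be zero, meaning one of $u,v$ does not pass through $P$, and consequently $I(P,u,v)=0$, as claimed. The argument is essentially a one-line bookkeeping of homogeneous components; the only subtle point, and the reason the ``single tangent direction'' hypothesis is essential, is step that identifies $U_a$ and $V_b$ as powers of $L$ — without a unique tangent line, $H_m$ could split nontrivially between $U_a$ and $V_b$ and the divisibility argument in the last step would fail.
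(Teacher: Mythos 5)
Your proof is correct. The paper does not supply its own argument for this lemma---it simply defers to \cite{Janwa-McGuire-Wilson}---and your write-up is a clean, self-contained version of exactly the standard argument used there: the single tangent direction forces $H_m=cL^m$, unique factorization distributes $L^a$ and $L^b$ to the lowest forms of $u$ and $v$, and if both $a,b\geq 1$ then $L$ divides $H_{m+1}=U_aV_{b+1}+U_{a+1}V_b$ as well as $H_m$, contradicting $\gcd(H_m,H_{m+1})=1$. Your closing remark correctly identifies why the unique-tangent hypothesis cannot be dropped (a node with coprime $H_m,H_{m+1}$ would otherwise be a counterexample).
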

 
Proof: See \cite{Janwa-McGuire-Wilson}.

\subsection{Type I}

We upper bound the intersection multiplicity at the Type I point.

\begin{lemma}
If $g_t(x,y)=u(x,y)v(x,y)$ and $P$ is of Type I then
$I(P,u,v)\leq  (\frac{p^{i}+1}{2})^2$.
\end{lemma}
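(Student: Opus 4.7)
My plan is to upper bound $I(P,u,v)$ by $m_P(u)\cdot m_P(v)$ using the equality case of Corollary~\ref{DifferentTangentCone}, and then apply AM-GM to the constraint $m_P(u)+m_P(v)=m_P(g_t)\leq p^i+1$. The main task is therefore to show that the tangent cones of $u$ and $v$ at $P$ share no common linear factor, which I will do by proving that the tangent cone of $g_t$ itself is a product of pairwise distinct linear factors.

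Fix $P=(\alpha,\beta)$ of Type I, so $F_{p^i}=0$ and the first nonzero homogeneous component of $f_t$ at $P$ is $F_{p^i+1}=Ax^{p^i+1}-By^{p^i+1}$. I would first check that $A,B\neq 0$: if $A=0$, then $(\alpha+1)^{p^i(\ell-1)}=\alpha^{p^i(\ell-1)}$, and combining this with the Type I identity (\ref{eq:3}) gives $\alpha^{p^i(\ell-1)}((\alpha+1)-\alpha)=0$, forcing $\alpha=0$, which contradicts (\ref{eq:3}) itself. With $A,B\neq 0$, Lemma~\ref{differentfactors} guarantees $F_{p^i+1}$ factors into $p^i+1$ pairwise distinct linear forms.

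Next I would transfer this information from $f_t$ to $g_t=f_t/(x-y)$, splitting into two subcases. If $\alpha\neq\beta$, then $w:=x-y$ is a unit at $P$, so $m_P(g_t)=p^i+1$ and the tangent cone $G_{p^i+1}=F_{p^i+1}/(\alpha-\beta)$ inherits the distinct factorization. If $\alpha=\beta$, then $A=B$, so
\[
F_{p^i+1}=A(x^{p^i+1}-y^{p^i+1})=A(x-y)(x^{p^i}+x^{p^i-1}y+\cdots+y^{p^i}),
\]
and dividing by $w$ (whose lowest-degree part is $x-y$) yields $m_P(g_t)=p^i$ and tangent cone $G_{p^i}=A(x^{p^i}+x^{p^i-1}y+\cdots+y^{p^i})$. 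This is the quotient of a squarefree polynomial by one of its distinct linear factors, hence is itself a product of $p^i$ pairwise distinct linear factors.

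In either subcase the tangent cone of $g_t$ at $P$ consists of $m_P(g_t)$ distinct lines. Writing $r_1=m_P(u)$ and $r_2=m_P(v)$, the relation $g_t=uv$ forces the product $U_{r_1}V_{r_2}$ of their tangent cones to equal the tangent cone of $g_t$ up to scalar, so $U_{r_1}$ and $V_{r_2}$ share no common linear factor. Corollary~\ref{DifferentTangentCone} then gives $I(P,u,v)=r_1 r_2$, and AM-GM yields
\[
r_1 r_2 \leq \left(\frac{r_1+r_2}{2}\right)^2 = \left(\frac{m_P(g_t)}{2}\right)^2 \leq \left(\frac{p^i+1}{2}\right)^2,
\]
which is the desired bound. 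The trickiest step I anticipate is the $\alpha=\beta$ subcase, where $m_P(g_t)$ drops by one and I must explicitly carry out the factorization of $x^{p^i+1}-y^{p^i+1}$ to verify that the quotient tangent cone remains squarefree.
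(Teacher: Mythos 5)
Your proof is correct and follows essentially the same route as the paper: Lemma \ref{differentfactors} gives distinct linear factors in the relevant tangent cone, so $I(P,u,v)=m_P(u)\,m_P(v)$ by Corollary \ref{DifferentTangentCone}, and AM--GM finishes. You are in fact more careful than the paper's own proof, which flatly asserts $m_P(g_t)=p^i+1$ and skips both the verification that $A,B\neq 0$ and the $\alpha=\beta$ subcase where the multiplicity drops to $p^i$; your explicit handling of these points only strengthens the argument and does not change the approach.
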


Proof:
Let $P$ be of Type I.
We know that $m_P(g_t)=p^i+1=m_P(u)+m_P(v)$.
From Lemma \ref{differentfactors} we know that
$F_{p^i+1}$ has $p^i+1$ different linear factors. 
Thus, $I(P,u,v)=m_p(u)m_p(v)$. 
This quantity is maximized when  $m_P(u)=m_P(v)$ and in this case
 $m_p(u)m_p(v)=(\frac{p^{i}+1}{2})^2$.

\subsection{Type II}

We show that there are two possibilities for the intersection multiplicity at a Type II point.

\begin{lemma}\label{IntersectionTypeII}
Suppose we are in case (B).
If $g_t(x,y)=u(x,y)v(x,y)$ and $P=(\alpha,\beta)$ is a point of type (II) then
either $I(P,u,v)= p^i$ or $I(P,u,v)= 0$.
\end{lemma}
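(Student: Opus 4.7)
The plan is to apply Lemma \ref{multp0o1} to split into two cases and, in the nontrivial case, to compute the intersection multiplicity via a local parametrization of the curve $u=0$.

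At a type (II) point $P=(\alpha,\beta)$ we have $\alpha\neq\beta$, so by Lemma \ref{repeatedLine} and Remark \ref{remw1} the lowest-degree part of $g_t(x+\alpha,y+\beta)$ is (a nonzero scalar multiple of) $L^{p^i}$. Write
\[
u(x+\alpha,y+\beta)=L^{r_1}+u_1,\qquad v(x+\alpha,y+\beta)=L^{r_2}+v_1,
\]
with $r_1\le r_2$, $r_1+r_2=p^i$, and $u_1,v_1$ collecting terms of degree strictly greater than $r_1,r_2$ respectively. By Lemma \ref{multp0o1}, $r_1\in\{0,1\}$. If $r_1=0$ then $u(P)\neq 0$, so $u$ is a unit in the local ring $\mathcal O_P$ and $I(P,u,v)=0$.

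Assume now $r_1=1$, so $r_2=p^i-1$. Since $u$ is smooth at $P$, the local ring $\mathcal O_P/(u)$ is a DVR, and $I(P,u,v)=\operatorname{ord}_P(v|_{u=0})$. Taking $L$ and a transverse linear form $M$ as local coordinates at $P$, the implicit function theorem (in formal power series) produces a parametrization $L=\phi(M)$ of the curve $u=0$ with $\phi(M)\in M^2\,\overline{\mathbb F}_p[[M]]$. Substituting into $v=L^{p^i-1}+V_{p^i}+V_{p^i+1}+\cdots$ yields three kinds of contributions: $\phi(M)^{p^i-1}$ of order $\ge 2(p^i-1)$; the term $V_{p^i}(\phi(M),M)=a_0 M^{p^i}+O(M^{p^i+1})$, where $a_0$ is the coefficient of $M^{p^i}$ in $V_{p^i}(L,M)$; and higher terms $V_j(\phi(M),M)$ of order $\ge j\ge p^i+1$.

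The key step is to prove $a_0\neq 0$, equivalently $L\nmid V_{p^i}$. Matching the degree $p^i+1$ coefficient of $uv=g_t$ gives
\[
G_{p^i+1}=L\,V_{p^i}+L^{p^i-1}U_2=L\bigl(V_{p^i}+L^{p^i-2}U_2\bigr),
\]
where $U_2$ denotes the (possibly zero) degree-$2$ component of $u_1$. Thus $L$ divides $G_{p^i+1}$, and by the identity $F_{p^i+1}=W_0 G_{p^i+1}+W_1 G_{p^i}$ from Remark \ref{remw1} together with $L^{p^i}\mid W_1 G_{p^i}$, also $L$ divides $F_{p^i+1}$. By Lemma \ref{differentfactors}, $F_{p^i+1}$ has $p^i+1$ distinct linear factors, so $L^2\nmid F_{p^i+1}$ and hence $L^2\nmid G_{p^i+1}$. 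Therefore $V_{p^i}+L^{p^i-2}U_2$ is not divisible by $L$; since $p^i\ge 3$ we have $L\mid L^{p^i-2}U_2$, forcing $L\nmid V_{p^i}$ and $a_0\neq 0$. Because $p^i<2(p^i-1)$, this yields $\operatorname{ord}_P(v|_{u=0})=p^i$, so $I(P,u,v)=p^i$. The main obstacle is precisely this bookkeeping step, transferring the distinct-factor property of $F_{p^i+1}$ down to the single coefficient $a_0$ inside $V_{p^i}$.
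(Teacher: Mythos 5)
Your proof is correct, and its core coincides with the paper's: both split into the cases $r_1=0$ and $r_1=1$ via Lemma \ref{multp0o1}, and both reduce the nontrivial case to showing that $L$ does not divide $V_{p^i}+L^{p^i-2}U_2$, which you obtain exactly as the paper does, from $L^2\nmid G_{p^i+1}$ via Remark \ref{remw1} and the distinct linear factors of $F_{p^i+1}$ (Lemma \ref{differentfactors}). Where you genuinely differ is only in the final step. The paper replaces $v$ by $uL^{p^i-2}-v$ (which leaves the intersection number unchanged), observes that the resulting curve has multiplicity $p^i$ at $P$ with tangent cone $L^{p^i-2}U_2-V_{p^i}$ coprime to $L$, and concludes $I(P,u,v)=1\cdot p^i$ from the transversality criterion of Corollary \ref{DifferentTangentCone}. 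You instead use smoothness of $u$ at $P$ to identify $I(P,u,v)$ with the order of $v$ along the branch $L=\phi(M)$ with $\operatorname{ord}\phi\geq 2$, and read off the order $p^i$ from the coefficient $a_0$ of $M^{p^i}$ in $V_{p^i}$; the inequality $p^i<2(p^i-1)$ plays the role of the transversality criterion. The two mechanisms are equivalent in strength here: the paper's stays entirely within the properties of $I(P,\cdot,\cdot)$ it has already quoted, while yours imports the (standard, but unstated in the paper) identification of $I(P,u,v)$ with a valuation on the DVR $\mathcal{O}_P/(u)$ together with a formal implicit function theorem --- both of which are legitimate in positive characteristic since $\partial u/\partial L$ is a unit at $P$. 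Either route is complete.
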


Proof:
Assume $g_t(x,y)=u(x,y)v(x,y)$. 
Since $P$ is not on $w(x,y)=x-y$ 
by Lemma \ref{multp0o1} we know that 
$m_P(u)$ is either $1$ or $0$. If $m_P(u)=0$ then $I(P,u,v)= 0$.
If $m_P(u)=1$ we proceed as follows.

Let $L(x,y)=\s x + \t y$ and suppose (from the proof of Lemma \ref{multp0o1})
we have the following Taylor expansions at $P$:
$$
u(x+\a, y+\b)=L(x,y)+U_2(x,y)+\cdots
$$
$$
v(x+\a, y+\b)=L(x,y)^{p^i-1}+V_{p^{i}}(x,y)+\cdots
$$
It follows that
$$
u(x+\a,y+\b)L(x,y)^{p^i-2}-v(x+\a,y+\b)=L(x,y)^{p^i-2}U_2(x,y)-V_{p^{i}}(x,y)+\cdots.
$$
By definition of intersection multiplicity we have 
\[
I(P,u,v)=I(0,u(x+\a,y+\b),u(x+\a,y+\b)L(x,y)^{p^i-2}-v(x+\a,y+\b))
\] 
so we compute the right-hand side.   
Notice that $L(x,y)\nmid L(x,y)^{p^i-2}U_2(x,y)-V_{2^{i}}(x,y)$ because if $L(x,y)$ 
divides $L(x,y)^{p^i-2}U_2(x,y)-V_{p^{i}}(x,y)$ then $L(x,y)$ also divides $V_{p^{i}}(x,y)$. Hence,  $L(x,y)^2$ divides $L(x,y)(L(x,y)^{p^i-2}U_2(x,y)+V_{p^{i}}(x,y))=G_{p^i+1}(x,y)$ which is a contradiction.

 Therefore,  $u(x+\a,y+\b)$ and $u(x+\a,y+\b)L^{p^i-2}-v(x+\a,y+\b)$ have different tangent cones.
It follows from a property of $I(P,u(x,y),v(x,y))$ that 
\[
I(0,u(x+\a,y+\b),u(x+\a,y+\b)L^{p^i-2}-v(x+\a,y+\b))=
\]
\[
 m_0(u(x+\a,y+\b))m_0(u(x+\a,y+\b)L^{p^i-2}-v(x+\a,y+\b))=p^i.
\]

\subsection{Type III}
Next result is equivalent to Type I with using multiplicity $p^i$ instead. 
\begin{lemma}\label{IntersectionTypeIII.A}
If $g_t(x,y)=u(x,y)v(x,y)$ and $P=(\alpha,\beta)$ is a point of type (III.A) then
$I(P,u,v)\leq p^{2i}/4$.
\end{lemma}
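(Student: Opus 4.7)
The plan is to mimic the argument used for Type I points, replacing multiplicity $p^i+1$ with $p^i$. Recall that at a Type III.A point $P=(\alpha,1,0)$ one has $m_P(g_t)\le p^i$ from the multiplicities table, and by definition $F'_{p^i-1}=0$, so the lowest nonzero homogeneous component of the expansion of $f'_t$ at $P$ is $F'_{p^i}$, which by Lemma~\ref{differentfactors} is a product of $p^i$ pairwise distinct linear factors. The substantive step is to transfer this reducedness property from $F'_{p^i}$ to the tangent cone of $g_t$ itself at $P$.

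I would split into two cases according to whether the line $w=x-y$ passes through $P$. If $\alpha\ne 1$, then $P\notin C_w$ (in the $y\ne 0$ chart, $w$ dehomogenizes to $x-1$ which does not vanish at $P$), so the constant term $W_0=\alpha-1$ of the expansion of $w$ at $P$ is nonzero; from $f'_t=w'\cdot g'_t$ the tangent cone of $g_t$ at $P$ is then a nonzero scalar multiple of $F'_{p^i}$. If $\alpha=1$, then $W_0=0$ while $W_1$ is a single linear factor; comparing the degree-$p^i$ terms in $f'_t=w'\cdot g'_t$ (using $F'_{p^i-1}=0$) yields $F'_{p^i}=W_1\cdot G'_{p^i-1}$, so $G'_{p^i-1}$ inherits $p^i-1$ distinct linear factors from the $p^i$ distinct factors of $F'_{p^i}$, and this is the tangent cone of $g_t$ at $P$. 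In both cases the tangent cone of $g_t$ at $P$ is a product of distinct linear factors.

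Now suppose $g_t=uv$. The tangent cones of $u$ and $v$ at $P$ are each products of subsets of the distinct linear factors of the tangent cone of $g_t$, so they share no common linear factor. By Corollary~\ref{DifferentTangentCone} this gives the equality $I(P,u,v)=m_P(u)\,m_P(v)$. Finally, from $m_P(u)+m_P(v)=m_P(g_t)\le p^i$ the AM--GM inequality yields $m_P(u)\,m_P(v)\le (m_P(g_t)/2)^2\le p^{2i}/4$, as required. The only real obstacle is the $\alpha=1$ case, but the factorization $F'_{p^i}=W_1\cdot G'_{p^i-1}$ resolves it immediately and in fact shows that $\alpha=1$ forces $m_P(g_t)=p^i-1$, making the bound strict there.
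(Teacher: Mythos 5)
Your proof is correct and follows essentially the same route as the paper, which simply remarks that the Type III.A case is the Type I argument with multiplicity $p^i$ in place of $p^i+1$: the tangent cone at $P$ splits into distinct linear factors (Lemma \ref{differentfactors}), so $I(P,u,v)=m_P(u)m_P(v)\le(m_P(g_t)/2)^2\le p^{2i}/4$. Your explicit treatment of the $\alpha=1$ point via $F'_{p^i}=W_1\cdot G'_{p^i-1}$ is a welcome detail that the paper leaves implicit (its Remark \ref{remw1} only records the analogous transfer for Type III.B), but it does not change the argument.
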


We show that intersection multiplicities at Type III.B points are 0, so these
points may be disregarded.

\begin{lemma}\label{IntersectionTypeIII.B}
If $g_t(x,y)=u(x,y)v(x,y)$ and $P=(\alpha,\beta)$ is a point of type (III.B) then
$I(P,u,v)=0$.
\end{lemma}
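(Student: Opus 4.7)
The plan is to apply the criterion of Lemma \ref{GCD=1} directly, with $g_t$ in the role of $h$, after passing to the affine chart $Y\neq 0$. In this chart the point $P=(\alpha:1:0)$ has coordinates $(\alpha,0)$. The Type III.B hypothesis $\alpha\notin\mathbb{F}_{p^i}$ forces $\alpha\neq 1$ (since $1\in\mathbb{F}_{p^i}$), so the line $x-y=0$ does not pass through $P$. Because $F_t(X,Y,Z)=(X-Y)G_t(X,Y,Z)$, this gives $m_P(g_t)=m_P(f_t)=p^i-1$.

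The first verification I would make is that the tangent cone of $g_t$ at $P$ has a single linear factor of multiplicity $p^i-1$. Lemma \ref{le: HomogeneousAtInfinity} gives
\[
F'_{p^i-1}\;=\;j\bigl(\alpha^{p^i(\ell-1)+1}-1\bigr)\,z^{p^i-1},
\]
and the Type III.B condition is exactly that this expression is nonzero. Writing $f'_t(x+\alpha,z)=(x+\alpha-1)\,g'_t(x+\alpha,z)$ and comparing the lowest-degree homogeneous parts, one sees
\[
G'_{p^i-1}\;=\;\frac{j(\alpha^{p^i(\ell-1)+1}-1)}{\alpha-1}\,z^{p^i-1},
\]
so the tangent cone at $P$ is the single repeated line $z=0$, i.e.\ there is exactly one tangent direction.

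Next, I would invoke Remark \ref{remw1}, whose final assertion records precisely that $(G'_{p^i-1},G'_{p^i})=1$ for points of Type III.B. Together with the previous paragraph, both hypotheses of Lemma \ref{GCD=1} are in place: consecutive homogeneous components of the Taylor expansion of $g_t$ at $P$ are coprime, and the tangent cone has only one linear factor. Since by hypothesis $g_t=uv$, the lemma yields $I(P,u,v)=0$, as required.

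There is no real obstacle here: the statement is a short corollary of two results already established, namely the explicit formula for $F'_{p^i-1}$ in Lemma \ref{le: HomogeneousAtInfinity} and the coprimality in Remark \ref{remw1}. The only conceptual point worth stressing is that, despite the rather high multiplicity $p^i-1$ at $P$, the tangent cone degenerates to a single line, which is exactly what enables the GCD-criterion to annihilate the intersection multiplicity.
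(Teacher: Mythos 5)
Your proof is correct and follows essentially the same route as the paper: both deduce $\gcd(G'_{p^i-1},G'_{p^i})=1$ from Remark \ref{remw1} and then conclude via Lemma \ref{GCD=1}. The only difference is that you explicitly verify the single-tangent-direction hypothesis of Lemma \ref{GCD=1} (via $F'_{p^i-1}$ being a nonzero multiple of $z^{p^i-1}$), which the paper leaves implicit; this is a welcome addition, not a divergence.
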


Proof:
Notice that $gcd(F'_{p^i-1},F'_{p^i})=1$ and therefore
$gcd(G'_{p^i-1},G'_{p^i})=1$ by Remark \ref{remw1}. 
The proof concludes using Lemma \ref{GCD=1}.

\subsection{Case A}

\begin{lemma}
If $P$ is a singular affine  point,  and we are in case (A), 
and $g_t=g_1\cdots g_r$ over the algebraic closure $\overline{\mathbb{F}_p}$, then there are at most two indices $i,j$ such that $g_i(P)=g_j(P)=0$ with multiplicity $m_P(g_i)=m_P(g_j)=1$. Moreover $I(P,g_i,g_j)\leq 1$.
\end{lemma}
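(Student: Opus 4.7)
The plan is to combine three facts already established in the paper: Lemma \ref{le: Mult2casoA} giving $m_P(g_t)\le 2$ in Case (A), Lemma \ref{le:Proct2LinesCaseA} saying the degree-$2$ homogeneous component $F_2$ of $f_t$ at $P$ is a product of two distinct linear forms, and Corollary \ref{DifferentTangentCone} on when intersection multiplicity equals the product of local multiplicities.

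First I would use additivity of multiplicity under products: if $g_t=g_1\cdots g_r$ then $\sum_{k=1}^{r} m_P(g_k)=m_P(g_t)\le 2$. Consequently at most two of the factors pass through $P$, and if exactly two do, say $g_i$ and $g_j$, then each must vanish to order exactly one at $P$ while $g_k(P)\ne 0$ for every other $k$. This already proves the first assertion. Note that in the subcase $\alpha=\beta$ we have $m_P(g_t)=1$, so the ``two indices'' hypothesis is vacuous; hence for the intersection-multiplicity claim I may assume $\alpha\ne\beta$.

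Next I would transfer the tangent-cone information from $f_t$ to $g_t$. Writing $f_t(x+\alpha,y+\beta)=\bigl((\alpha-\beta)+(x-y)\bigr)\,g_t(x+\alpha,y+\beta)$ and comparing degree-$2$ homogeneous parts yields $F_2=(\alpha-\beta)\,G_2$. Since $\alpha-\beta\ne 0$, Lemma \ref{le:Proct2LinesCaseA} implies that $G_2$ is also a product of two distinct linear forms. Now if $g_i$ and $g_j$ have Taylor expansions $L_i+(\text{higher order})$ and $L_j+(\text{higher order})$ at $P$, where $L_i,L_j$ are nonzero linear forms, then reading off the degree-$2$ part of $\prod_k g_k(x+\alpha,y+\beta)$ gives $G_2=C\cdot L_iL_j$, where $C:=\prod_{k\ne i,j} g_k(P)\in\overline{\mathbb{F}}_p^{\times}$. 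Distinctness of the linear factors of $G_2$ forces $L_i$ and $L_j$ to be non-proportional, so the tangent cones of $g_i$ and $g_j$ at $P$ share no common linear factor. Corollary \ref{DifferentTangentCone} then delivers $I(P,g_i,g_j)=m_P(g_i)\,m_P(g_j)=1$.

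I do not expect any real obstacle: the lemma is essentially a clean bookkeeping consequence of the structure built in Sections \ref{anasing} and the background. The only point requiring care is the passage from the tangent cone of $f_t$ to that of $g_t$, which relies on $\alpha\ne\beta$; and, as noted, the complementary subcase $\alpha=\beta$ makes the assertion vacuous.
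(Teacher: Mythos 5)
Your proof is correct and follows essentially the same route as the paper's: additivity of $m_P$ together with Lemma \ref{le: Mult2casoA} gives at most two factors through $P$, and the two distinct tangent lines from Lemma \ref{le:Proct2LinesCaseA} combined with Corollary \ref{DifferentTangentCone} give $I(P,g_i,g_j)=m_P(g_i)m_P(g_j)\le 1$. Your write-up is actually a bit more careful than the paper's, since you explicitly pass from the tangent cone of $f_t$ to that of $g_t$ via $F_2=(\alpha-\beta)G_2$ and dispose of the $\alpha=\beta$ subcase, whereas the paper silently identifies the two tangent cones.
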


\begin{proof}
From Lemma \ref{le: Mult2casoA} we know that  $m_P(g_t)\leq 2$ therefore there are at most 2 factors of $g_t$ containing $P$, otherwise it would have more multiplicity. Moreover from Lemma \ref{le:Proct2LinesCaseA} we know that the tangent cone of $g_t$ at $P$ is a pair of different lines, then $I(P,g_i,g_j)=m_p(g_i)m_p(g_j)\leq 1$. 
\end{proof}

\section{Case B: Proof assuming  $g_t(x,y)$ irreducible over $\mathbb{F}_p$}\label{part2proof}

Here is a well known result, its proof can
be found in \cite{Kopparty-Yekhanin}.

\begin{lemma}\label{IfIrreducibleEqualDegreeFactors}
Suppose that $p(\underline{x})\in \mathbb{F}_q[x_1,\ldots,x_n]$ is of degree $t$ and is irreducible in $\mathbb{F}_q[x_1,\ldots,x_n]$. Let $\mathbb{K}$ be a finite field extension of $\mathbb{F}_q$. Then there exists a $\mathbb{K}$-irreducible polynomial $h(\underline{x})$ such that the factorization of $p(\underline{x})$ into $\mathbb{K}$-irreducible polynomials is
$$
p(\underline{x})=c\prod_{\sigma\in G}\sigma(h(\underline{x})),
$$
where $G=Gal(\mathbb{K}/\mathbb{F}_{q})$ and $c\in \mathbb{F}_{q}$. Furthermore if $p(\underline{x})$
is homogeneous, then so is $h(\underline{x})$. 

In particular, there exists $r\mid t$ and an absolutely irreducible polynomial
$m(\underline{x})\in\mathbb{F}_{q^r}[x_1,\ldots,x_n]$ of degree $\frac{t}{r}$ such that
$$
p(\underline{x})=d\prod_{\sigma\in G'}\sigma(m(\underline{x})),
$$
where $G'=Gal(\mathbb{F}_{q^r}/\mathbb{F}_{q})$ and $d\in \mathbb{F}_{q}$.
\end{lemma}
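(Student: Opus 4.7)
The plan is to exploit the standard Galois-descent principle: $G=\mathrm{Gal}(\mathbb{K}/\mathbb{F}_q)$ acts on the $\mathbb{K}$-irreducible factors of every polynomial defined over $\mathbb{F}_q$, and the $\mathbb{F}_q$-irreducibility of $p$ forces that action to be transitive. I would begin by factoring $p = c\cdot h_1\cdots h_s$ in $\mathbb{K}[\underline{x}]$ into $\mathbb{K}$-irreducible pieces, normalized (say monic in a fixed monomial order) so as to remove the $\mathbb{K}^*$-ambiguity of each factor. Because $\mathbb{F}_q$ is perfect, $p$ is separable, so the $h_i$ are pairwise non-associate. For each $\sigma\in G$, applying $\sigma$ coefficientwise to the factorization of $p$ yields $c\cdot\sigma(h_1)\cdots\sigma(h_s)=\sigma(p)=p$; by uniqueness of factorization in $\mathbb{K}[\underline{x}]$, $\sigma$ must permute the normalized set $\{h_1,\ldots,h_s\}$.

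The next step is to prove transitivity. For any $G$-orbit $O\subseteq\{h_1,\ldots,h_s\}$ the product $\prod_{h\in O}h$ has coefficients stabilized by $G$, hence lies in $\mathbb{F}_q[\underline{x}]$, and divides $p$ in $\mathbb{K}[\underline{x}]$ and therefore in $\mathbb{F}_q[\underline{x}]$. Irreducibility of $p$ over $\mathbb{F}_q$ then forces this product to be an $\mathbb{F}_q^*$-multiple of $p$, so $O=\{h_1,\ldots,h_s\}$. Setting $h:=h_1$ and $H:=\mathrm{Stab}_G(h)$, the distinct Galois conjugates of $h$ are parametrized by $G/H$, and we obtain $p=c\prod_{\sigma\in G/H}\sigma(h)$; the product indexed by all of $G$ in the statement is this orbit product, with each conjugate written down $|H|$ times as a notational convention. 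Homogeneity of $h$ when $p$ is homogeneous follows by decomposing each $h_i$ into homogeneous components: the top-degree components of the $h_i$ already multiply to give the top form of $p$, which is $p$ itself, and $\mathbb{K}$-irreducibility of $h_i$ forces each $h_i$ to equal its own leading form.

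For the ``in particular'' statement I would pick an absolutely irreducible factor $m$ of $p$ in $\overline{\mathbb{F}_q}[\underline{x}]$ and let $\mathbb{F}_{q^r}$ be its minimal field of definition, i.e.\ the subfield of $\overline{\mathbb{F}_q}$ generated by the (normalized) coefficients of $m$. Applying the first part with $\mathbb{K}=\mathbb{F}_{q^r}$, the stabilizer of $m$ inside $G'=\mathrm{Gal}(\mathbb{F}_{q^r}/\mathbb{F}_q)$ is trivial by minimality of $r$, so $\prod_{\sigma\in G'}\sigma(m)$ contains exactly $r=|G'|$ distinct factors, each of degree $\deg(m)$. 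Comparing degrees gives $r\cdot\deg(m)=t$, whence $r\mid t$ and $\deg(m)=t/r$.

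The only delicate point is the bookkeeping around the stabilizer $H$: one must distinguish the orbit-product (which genuinely equals a scalar multiple of $p$) from the formal product over all of $G$ (which would be $p^{|H|}$ up to a scalar), and reconcile both with the notation of the statement; the ``in particular'' formulation avoids the ambiguity precisely because $H$ becomes trivial. Beyond this, the argument is a direct application of Galois descent together with unique factorization in $\mathbb{K}[\underline{x}]$, with no geometric input required.
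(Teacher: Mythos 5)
The paper does not actually prove this lemma; it cites \cite{Kopparty-Yekhanin} and moves on, and the argument in that reference is precisely the Galois-descent one you give. Your proof is correct and complete in all essential respects: the action of $G$ permutes the normalized $\mathbb{K}$-irreducible factors (distinct because $p$ is squarefree over $\overline{\mathbb{F}}_q$, $\mathbb{F}_q$ being perfect), an orbit product is $G$-invariant and hence an $\mathbb{F}_q$-divisor of $p$, irreducibility forces transitivity, and your remark that the product in the statement must really be read over $G/\mathrm{Stab}_G(h)$ is a genuine (if standard) abuse of notation that the ``in particular'' clause sidesteps because the stabilizer is trivial for the minimal field of definition. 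The only step I would tighten is homogeneity: $\mathbb{K}$-irreducibility of $h_i$ does not by itself force $h_i$ to equal its leading form; instead note that both the top and the bottom homogeneous components of $c\prod_i h_i$ equal $p$, so the sums of the top degrees and of the bottom degrees of the $h_i$ coincide, which forces each $h_i$ to be homogeneous. This is a one-line repair and does not affect the validity of the overall argument.
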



\begin{remark}\label{samedegree}
Notice that if $u(x,y)=\sum a_{i,j}x^i y^j$ then $\sigma(u(x,y))=\sum \sigma(a_{i,j})x^i y^j$ where $\sigma\in G$
is  the Frobenius map (or a power of it). Therefore, $u$ and $\sigma(u)$ have the same monomials and only differ in some coefficients. This means that both  $u$ and $\sigma(u)$ have the same degree.
\end{remark}

\begin{theo}
Suppose we are in case (B.1) and that $g_{t}(x,y)$ is irreducible over $\mathbb{F}_p$.
Then $g_t(x,y)$ is absolutely irreducible whenever:
\begin{itemize}
\item either $p\geq 5 $, $i\geq 1$ and $\ell> 1$,
\item or $p=3 $, $i\geq 2$ and $\ell> 1$.
\end{itemize}
\end{theo}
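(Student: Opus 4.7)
The plan is by contradiction. Suppose $g_t$ is $\mathbb{F}_p$-irreducible but factors nontrivially over $\overline{\mathbb{F}}_p$. Lemma \ref{IfIrreducibleEqualDegreeFactors} then writes $g_t = g_1\cdots g_r$ with $r\geq 2$ Galois-conjugate factors of equal degree $(t-1)/r$; because $t$ is even, $t-1$ is odd, so in fact $r\geq 3$. The generalised Bezout identity \eqref{bezoutseveral} reads
\[
\sum_{P}\sum_{1\leq i<j\leq r}I(P,g_i,g_j) \;=\; \frac{(r-1)(t-1)^2}{2r},
\]
and I will bound the left-hand side using the singular-point analysis of Sections \ref{anasing} and \ref{anaintersect} to reach a contradiction.

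The per-point bounds come from the intersection multiplicity analysis of Section \ref{anaintersect}. At a Type I point the tangent cone $F_{p^i+1}$ splits into $p^i+1$ distinct linear factors (Lemma \ref{differentfactors}), so the tangent cones of the $g_k$ partition these, Corollary \ref{DifferentTangentCone} gives $I(P,g_i,g_j)=m_P(g_i)m_P(g_j)$, and the maximum of $\sum_{i<j}m_im_j$ with $\sum_k m_k = p^i+1$ and $r$ parts is at most $(p^i+1)^2(r-1)/(2r)$; the same argument at Type III.A yields $p^{2i}(r-1)/(2r)$. At a Type II point, applying Lemma \ref{multp0o1} with $u=g_k$ and $v=\prod_{j\neq k}g_j$ restricts the multiplicity profile at $P$ to either $(p^i,0,\ldots,0)$ (contribution $0$) or $(p^i-1,1,0,\ldots,0)$ (contribution $p^i$ by Lemma \ref{IntersectionTypeII}); the profile with all participating multiplicities equal to $1$ is excluded because the unique tangent $L$ forces $I(P,g_k,g_j)\geq 2$ between any two participating factors, giving a total at least $2(p^i-1)>p^i$, contradicting Lemma \ref{IntersectionTypeII}. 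Type III.B points contribute $0$ by Lemma \ref{IntersectionTypeIII.B}.

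Dividing the Bezout identity by $(r-1)/(2r)$ converts these bounds into a coefficient of $2r/(r-1)$ on the Type II term, which is maximised at $r=3$ giving $3$; using the counts $N_1\leq(\ell/3-1)^2$ (Case B.1), $N_1+N_2\leq(\ell-1)^2$ and $N_3\leq\ell$, it suffices to verify
\[
\Bigl(\tfrac{\ell}{3}-1\Bigr)^{\!2}(p^i+1)^2 \;+\; 3(\ell-1)^2\,p^i \;+\; \ell\,p^{2i} \;<\; p^{2i}\ell^2.
\]
The main obstacle is verifying this final arithmetic inequality. Dividing by $\ell^2$ and letting $\ell\to\infty$ reduces it to $(p^i+1)^2/9+3p^i<p^{2i}$, which holds precisely when $p^i\geq 5$; that condition matches the theorem's hypotheses ($p\geq 5$, $i\geq 1$ and $p=3$, $i\geq 2$ both give $p^i\geq 5$), and fails for $p^i=3$, explaining the exclusion of $p=3$, $i=1$. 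A short case split on small versus large $\ell$ then completes the verification in the remaining regimes.
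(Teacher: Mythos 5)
Your proposal follows the same overall strategy as the paper's proof: assume $g_t$ is $\mathbb{F}_p$-irreducible but not absolutely irreducible, invoke Lemma \ref{IfIrreducibleEqualDegreeFactors} to obtain conjugate absolutely irreducible factors of equal degree, apply Bezout, and bound the intersection numbers at the singular points of Types I, II, III.A and III.B exactly as in Sections \ref{anasing} and \ref{anaintersect}, reducing everything to an arithmetic inequality in $p^i$ and $\ell$. Where you differ --- and where you are in fact more careful than the paper --- is in how the conjugate factors are grouped: the paper writes $g_t=uv$ with $\deg u=\deg v=(p^i\ell-1)/2$, which is strictly speaking impossible here, since $t-1$ is odd and the $r$ conjugate factors all have degree $(t-1)/r$ with $r\mid t-1$, so $r$ is odd and no splitting into two equal-degree halves exists; you instead note $r\geq 3$ and use the generalized Bezout identity over all pairs, which yields the coefficient $2r/(r-1)\leq 3$ on the Type II term (versus the paper's effective $4$) and arrives at the same admissible set of $(p,i,\ell)$. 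This is essentially the argument the paper itself deploys later in Theorem \ref{MainTheo}, imported here.

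One correction is needed: your ``it suffices to verify'' inequality should have $(p^i\ell-1)^2$ on the right-hand side, not $p^{2i}\ell^2$. The latter is strictly larger than $\deg(g_t)^2$, so bounding the left-hand side by $p^{2i}\ell^2$ does not by itself yield the contradiction with Bezout. The discrepancy $2p^i\ell-1$ is of lower order, and the corrected inequality still holds under the stated hypotheses (for instance at $p^i=5$, $\ell=3$ one gets $196>135$, and the leading-coefficient comparison $p^{2i}>(p^i+1)^2/9+3p^i$ holds exactly when $p^i\geq 5$), so this is a bookkeeping slip rather than a gap; but as written the sufficiency claim points the wrong way and the deferred ``short case split'' must be carried out against the correct right-hand side.
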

\begin{proof}
Suppose not, then $g_t(x,y,z)=u(x,y,z)v(x,y,z)$. Using Remark \ref{samedegree} we have that $deg(u)=deg(v)=(p^{i}\ell-1)/2$ .
We apply Bezout's Theorem to $u$ and $v$:
\begin{equation}\label{eqB}
\sum_{P\in Sin(g)} I(P,u,v)=deg(u) deg(v)=((p^{i}\ell-1)/2)^2.
\end{equation}
We can bound the left hand side as follows,
\begin{equation}\label{eqM}
\sum_{P\in Sing(g_t)} I(P,u,v)=\sum_{P\in I} I(P,u,v)+\sum_{P\in II} I(P,u,v)+\sum_{P\in III.A} I(P,u,v)
\end{equation}

\begin{equation}\label{eqMbound}
\leq  \frac{(p^i+1)^2}{4}(\ell/3-1)^2 +\frac{4}{4}p^i[(\ell-1)^2-(\ell/3-1)^2])+\frac{p^{2i}}{4}\ell
\end{equation}

We have that (\ref{eqB})$\leq$(\ref{eqMbound}) by Bezout's Theorem, 
so if we prove that (\ref{eqB})$>$(\ref{eqMbound}) we get a contradiction.
The inequality (\ref{eqB})$>$(\ref{eqMbound}) is
\begin{equation}\label{eq:ecuacion-clave}
p^{2i}\ell^2-2p^i\ell+1> (p^{2i}+2p^i+1)(\ell^2/9-2/3\ell+1)+4p^i(\ell^2-2\ell+1-\ell^2/9+2/3\ell-1)+p^{2i}\ell
\end{equation}

\begin{equation}\label{eq:ecuacion-clave1}
p^{2i}(\ell^2-\ell^2/9+2/3\ell-1-\ell)> p^i(2\ell-2(\ell^2/9-2/3\ell+1)+4(\ell^2-2\ell+1))+(\ell^2/9-2/3\ell)
\end{equation}
\begin{equation}\label{eq:ecuacion-clave2}
p^{2i}(\frac{8}{9}\ell^2-\frac{1}{3}\ell-1)> p^i(\frac{34}{9} \ell^2-\frac{14}{3}\ell+2)+(\ell^2/9-2/3\ell)
\end{equation}

\begin{equation}\label{eq:ecuacion-clave3}
1> \frac{(\frac{34}{9} \ell^2-\frac{14}{3}\ell+2)}{p^{i}(\frac{8}{9}\ell^2-\frac{1}{3}\ell-1)}+\frac{(\ell^2/9-2/3\ell)}{p^{2i}(\frac{8}{9}\ell^2-\frac{1}{3}\ell-1)}
\end{equation}

Notice that in one hand we have that 
$$
 \frac{(\frac{34}{9} \ell^2-\frac{14}{3}\ell+2)}{(\frac{8}{9}\ell^2-\frac{1}{3}\ell-1)}\leq \frac{34}{8}\Leftrightarrow 
(\frac{34}{9} \ell^2-\frac{14}{3}\ell+2) \leq (\frac{34}{9}\ell^2-\frac{34}{24}\ell-34/8)  
$$
$$
\Leftrightarrow 78/24 \ell \geq 50/8 \Leftrightarrow \ell \geq 150/78 \quad \ell\geq 3.
$$
In the other hand we have that 

$$\frac{(\ell^2/9-2/3\ell)}{(\frac{8}{9}\ell^2-\frac{1}{3}\ell-1)}<\frac{1}{8} \Leftrightarrow (\ell^2/9-2/3\ell)<(\frac{1}{9}\ell^2-\frac{1}{24}\ell-\frac{1}{8}) 
$$
$$
\Leftrightarrow  \frac{15}{24}\ell>\frac{1}{8}\Leftrightarrow \ell>\frac{1}{5} \quad \ell\geq 1
$$

So the right hand side in equation (\ref{eq:ecuacion-clave3}) is less than 
$\frac{34}{8p^i}+\frac{1}{8p^{2i}}$ it is less than one if either 
$p>4$ for any $i\geq 1$ or $p=3$ for any $i\geq 2$.
 
\end{proof}


\begin{theo}
Suppose we are in case (B.2)
but $\ell< p^i-1$, and suppose that $g_{t}(x,y)$ is irreducible over $\mathbb{F}_p$ then $g_t(x,y)$ is absolutely irreducible.
\end{theo}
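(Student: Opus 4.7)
The plan is to mimic the strategy of the previous theorem. First I would assume for contradiction that $g_t(x,y)$, though irreducible over $\mathbb{F}_p$, is not absolutely irreducible, so that $g_t = uv$ over $\overline{\mathbb{F}}_p$ with $\deg u = \deg v = (p^i\ell - 1)/2$ by Lemma~\ref{IfIrreducibleEqualDegreeFactors} together with Remark~\ref{samedegree}. Bezout's theorem then yields
\[
\sum_P I(P, u, v) = \left(\frac{p^i\ell - 1}{2}\right)^2,
\]
and the aim is to contradict this by bounding the sum strictly from above.

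The crucial simplification afforded by (B.2) is that \emph{every} affine singular point of $f_t$ is of Type I. Indeed, since $\ell \mid p^i - 1$, all $\ell$-th roots of unity lie in $\mathbb{F}_{p^i}$, so every singular point has coordinates $\alpha = 1/(\omega - 1) \in \mathbb{F}_{p^i}$ (with $\omega \neq 1$ satisfying $\omega^\ell = 1$). Consequently, equations (\ref{eq:5}) and (\ref{eq:6}) are automatic and $F_{p^i} = 0$ throughout. Among the at most $(\ell - 1)^2$ affine singular points, exactly $\ell - 1$ are diagonal ($\alpha = \beta$) and thus at most $(\ell - 1)(\ell - 2)$ are non-diagonal.

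Next I would bound the intersection multiplicities using Corollary~\ref{DifferentTangentCone}. At a non-diagonal Type I point, $m_P(g_t) = p^i + 1$ and $G_{p^i + 1}$ splits into $p^i + 1$ distinct linear factors (Lemma~\ref{differentfactors}), so $I(P, u, v) = m_P(u)\,m_P(v) \leq ((p^i + 1)/2)^2$. At a diagonal Type I point, $m_P(g_t) = p^i$ and $G_{p^i} = F_{p^i + 1}/(x - y)$ factors as a product of $p^i$ distinct linear forms (the $(p^i + 1)$-th roots of unity different from $1$), whence $I(P, u, v) \leq p^{2i}/4$. At infinity, all $\ell$ singular points lie in Type III.A (again since $\ell \mid p^i - 1$), each contributing at most $p^{2i}/4$ by Lemma~\ref{IntersectionTypeIII.A}; Type III.B is empty.

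Summing these estimates produces the inequality
\[
\left(\frac{p^i\ell - 1}{2}\right)^2 \leq \frac{(p^i + 1)^2}{4}(\ell - 1)(\ell - 2) + \frac{p^{2i}}{4}(2\ell - 1),
\]
so the contradiction will come from verifying the opposite strict inequality. The hypothesis $\ell \mid p^i - 1$ together with $\ell < p^i - 1$ forces $p^i \geq 2\ell + 1$, while $\ell \geq 3$ since $\ell$ is odd and $\ell = 1$ gives the excluded exponent $p^i + 1$. Writing $A = p^i$ and $B = \ell$, routine expansion reduces the required inequality to
\[
A^2(B - 1) - 2AB(B - 2) - 4A - (B - 1)(B - 2) + 1 > 0.
\]
The main obstacle is this elementary polynomial inequality: substituting $A = 2B + 1$ yields $5B^2 - 4B - 6$, positive for $B \geq 3$, and the $A$-derivative $2A(B - 1) - 2B(B - 2) - 4$ is positive for $A \geq 2B + 1,\ B \geq 3$, so the inequality persists throughout the allowed range, completing the argument.
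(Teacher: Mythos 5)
Your proposal is correct and follows essentially the same route as the paper's proof: the same Bezout setup with $\deg u=\deg v=(p^i\ell-1)/2$, the same observation that in case (B.2) all singular points are of Type I (with $(\ell-1)(\ell-2)$ non-diagonal, $\ell-1$ diagonal, and $\ell$ at infinity of Type III.A), and the same intersection-multiplicity bounds, leading to an identical final inequality. The only cosmetic difference is in how the resulting polynomial inequality is verified (you check it at $p^i=2\ell+1$ and use monotonicity, while the paper divides through by $p^i(\ell-1)$), but both rest on $p^i\geq 2\ell+1$ and $\ell\geq 3$.
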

\begin{proof}
First of all notice that since $\gcd(\ell,p^i-1)= \ell< p^i-1$ then 
$\ell\mid p^i-1$, hence $p^i-1\geq 2\ell \Leftrightarrow p^i\geq 2(\ell-1)+3$.

Suppose not, then $g_t(x,y,z)=u(x,y,z)v(x,y,z)$. Using Remark \ref{samedegree} we have that $deg(u)=deg(v)=(p^{i}\ell-1)/2$ .
We apply Bezout's Theorem to $u$ and $v$:
\begin{equation}\label{eq1'}
\sum_{P\in Sin(g)} I(P,u,v)=deg(u) deg(v)=((p^{i}\ell-1)/2)^2.
\end{equation}
We can bound the left hand side as follows,
\begin{equation}\label{eq2'}
\sum_{P\in Sing(g_t)} I(P,u,v)=\sum_{P\in I} I(P,u,v)+\sum_{P\in II} I(P,u,v)+\sum_{P\in III.A} I(P,u,v)
\end{equation}
Since $d=\ell$ then all the singular points are of type $I$ and no points of type $II$. Between the points of type $I$ we have $(\ell-1)(\ell-2)$ with both coordinates different and $(\ell-1)$ with both coordinates equal and therefore intersection multiplicity at most $p^{2i}/4$. 

\begin{equation}\label{eq3'}
(\ref{eq2'})\leq \frac{(p^{2i}+2p^i+1)(\ell-1)(\ell-2)}{4}+\frac{p^{2i}(\ell-1)}{4}+
\frac{p^{2i}\ell}{4}
\end{equation}
We have that (\ref{eq1'})$\leq$(\ref{eq3'}) by Bezout's Theorem, 
so if we prove that (\ref{eq1'})$>$(\ref{eq3'}) we get a contradiction.

\begin{equation}\label{eq:ecuacion-clave'}
p^{2i}\ell^2-2p^i\ell+1> (p^{2i}+2p^i+1)(\ell^2-3\ell+2)+p^{2i}(\ell-1)+p^{2i}\ell
\end{equation}

\begin{equation}\label{eq:ecuacion-clave1'}
p^{2i}(\ell-1)> 2p^i(\ell^2-2\ell+2)+(\ell^2-3\ell+2)=2p^i(\ell-1)^2+2p^i+(\ell^2-3\ell+2)
\end{equation}

\begin{equation}\label{eq:ecuacion-clave2'}
p^{i}> 2(\ell-1)+\frac{2}{\ell-1}+\frac{\ell-2}{p^i}
\end{equation}
Remember that  $p^i\geq 2(\ell-1)+3$ and $\ell\geq 3$ then .
$$
2(\ell-1)+\frac{2}{\ell-1}+\frac{\ell-2}{p^i}\leq
2(\ell-1)+1+\frac{1}{2}< 2(\ell-1)+3\leq p^i.
$$
 
\end{proof}

\begin{remark}
Notice that the latter proof does not hold for $\ell=1$ as it should be, because it is already known that  $t=p^i+1$ is a exceptional number. 
\end{remark}

\begin{remark}
Only left to prove the case $\ell=p^i-1$, i.e., $t=p^{2i}-p^i+1$.
\end{remark}

\section{Case B: Proof assuming $g_t(x,y)$ not irreducible over $\mathbb{F}_p$}\label{part3proof}

Suppose $g_t=f_1\cdots f_r$ is the factorization into irreducible factors over $\mathbb{F}_p$.
Let $f_j=f_{j,1}\cdots f_{j,n_j}$ be the factorization of $f_j$ into $n_j$ absolutely irreducible factors.
Each $f_{j,s}$ has degree $\deg (f_j)/n_j$.

\begin{lemma}\label{OnlyTwohasMultiplicity}
If $P$ is a point of type $II$ then one of the following  holds:
\begin{enumerate}
\item  $m_P(f_{j,s})=0$ for all $j\in \{1,\ldots,r\}$ and $s\in\{1,\ldots,n_j\}$
except for a  pair $(j_1,s_1)$  with  $m_P(f_{j_1,s_1})=p^i$.
\item
$m_P(f_{j,s})=0$ for all $j\in \{1,\ldots,r\}$ and $s\in\{1,\ldots,n_j\}$
except for two  pair $(j_1,s_1)$ and $(j_2,s_2)$ with $m_P(f_{j_1,s_1})=1$ and $m_P(f_{j_2,s_2})=p^{i}-1$.
\end{enumerate}
\end{lemma}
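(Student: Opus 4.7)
The plan is to combine the rigidity of the tangent cone at a Type II point with Lemma \ref{multp0o1}, and then use the distinct-linear-factor property of $F_{p^i+1}$ (Lemma \ref{differentfactors}) to rule out all configurations except those listed in the statement.

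First I would observe that at a Type II point $P=(\alpha,\beta)$, Lemma \ref{repeatedLine} gives $F_{p^i}=L^{p^i}$ for a single linear form $L$, and the tangent cone of $g_t$ equals the product of the tangent cones of its factors. Since $L^{p^i}$ factors uniquely as a power of $L$, every absolutely irreducible factor $f_{j,s}$ that passes through $P$ has tangent cone of the form $c_{j,s}L^{m_{j,s}}$, where $m_{j,s}:=m_P(f_{j,s})\geq 1$. Applying Lemma \ref{multp0o1} to the factorization $g_t=f_{j,s}\cdot(g_t/f_{j,s})$, whose two tangent cones are powers of $L$ of orders $m_{j,s}$ and $p^i-m_{j,s}$ respectively, forces $\min(m_{j,s},p^i-m_{j,s})\leq 1$, and hence $m_{j,s}\in\{0,1,p^i-1,p^i\}$.

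Because $\sum_{j,s}m_{j,s}=m_P(g_t)=p^i$, exactly three configurations are compatible with this constraint: (a) one factor has $m_P=p^i$ and all other factors vanish at $P$ (conclusion 1); (b) one factor has $m_P=p^i-1$ and exactly one other has $m_P=1$ (conclusion 2); or (c) every factor through $P$ has multiplicity $1$, so that precisely $p^i$ distinct absolutely irreducible factors $f_1,\ldots,f_{p^i}$ pass through $P$.

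The main obstacle is eliminating configuration (c). For this, I would write $f_s(x+\alpha,y+\beta)=c_s L+u_s+\cdots$ with $c_s\neq 0$ and $u_s$ the homogeneous quadratic part, and let $w$ denote the product of the remaining factors, so that $w(x+\alpha,y+\beta)=W+W_1+\cdots$ with $W\neq 0$. Multiplying out $g_t=w\prod_{s=1}^{p^i}f_s$ and collecting the terms of degree $p^i+1$ produces
\begin{equation*}
F_{p^i+1}=L^{p^i-1}\Bigl[\,W\sum_{s}u_s\prod_{s'\neq s}c_{s'}\;+\;W_1\,L\prod_s c_s\,\Bigr].
\end{equation*}
Since $p$ is odd and $i\geq 1$, we have $p^i-1\geq 2$, so $L^2$ divides $F_{p^i+1}$. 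This contradicts Lemma \ref{differentfactors}, which asserts that $F_{p^i+1}$ consists of $p^i+1$ distinct linear factors. Hence configuration (c) cannot occur, and the lemma follows.
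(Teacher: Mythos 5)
Your proof is correct and follows essentially the same strategy as the paper's: Lemma \ref{repeatedLine} together with Lemma \ref{multp0o1} pins each $m_P(f_{j,s})$ to $\{0,1,p^i-1,p^i\}$, and since the multiplicities sum to $p^i$ the only configuration left to exclude is the all-ones one. The paper kills that configuration by applying Lemma \ref{multp0o1} to the grouping $u=f_{a,b}f_{a',b'}$, $v=g_t/u$ (two multiplicity-one factors would force tangent cones $L^2$ and $L^{p^i-2}$), while your direct expansion showing $L^{p^i-1}$ divides the degree-$(p^i+1)$ part is just a more explicit route to the same contradiction with Lemma \ref{differentfactors} (and is actually cleaner for the edge case $p^i=3$); note only that what you call $F_{p^i+1}$ there is really $G_{p^i+1}$, though $L^2$ then divides $F_{p^i+1}=W_1G_{p^i}+W_0G_{p^i+1}$ as well, so the contradiction stands.
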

\begin{proof}
This is a consequence of Lemma \ref{repeatedLine} and Lemma \ref{multp0o1}.
Consider $u=f_{a,b}$ and $v=\prod_{j\neq a, s\neq b}f_{j,s}$ from Lemma \ref{multp0o1}
we know that $m_P(f_{a,b})$ is either $0$ or $1$ or $p^{i}-1$ or $p^i$ (resp $m_p(v)$ is either $p^i$ or $p^i-1$ or $1$ or $0$). But this is true for any pair $(a,b)$.

Clearly no two components $f_{a,b}$ and $f_{a',b'}$ has multiplicity greater than or equal to $p^i-1$ because the total multiplicity $m_P(g_t)=p^i$. And there are no two components $f_{a,b}$ and $f_{a',b'}$ with multiplicity equal to $1$,
because then $u=f_{a,b}f_{a',b'}$ has $L$ two times in the tangent cone and $v=g/u$ has $L^{p^{i-2}}$ in the tangent cone which is impossible. Hence the only possibilities are:
\begin{itemize}
 \item[(i)] There exists $(a,b)$ with $m_P(f_{a,b})=p^i$, and  $m_P(f_{j,s})=0$ for
$(j,s)\neq(a,b) $.
\item[(ii)] There exist $(a,b)$ and $(a',b')$ with $m_P(f_{a,b})=1$ and $m_P(f_{a',b'})=p^i-1$, and $m_P(f_{j,s})=0$ for
$(j,s)\neq(a,b) $ , $(j,s)\neq(a',b') $.
\end{itemize}
\end{proof}

\begin{lemma}\label{IntersectionReducibleI}
If $P$ is a point of type $I$ or $III.A$, then for any two components $f_{a,b}$ and $f_{a',b'}$ we have that $I(P,f_{a,b},f_{a',b'})=m_P(f_{a,b})m_P(f_{a',b'})$.
\end{lemma}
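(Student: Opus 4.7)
The plan is to deduce this immediately from Corollary \ref{DifferentTangentCone}, which gives $I(P,r,s) \geq m_P(r)m_P(s)$ with equality precisely when the tangent cones at $P$ share no common linear factor. So the entire task reduces to checking that, at a type $I$ or type $III.A$ point, the tangent cones of any two distinct absolutely irreducible components $f_{a,b}$ and $f_{a',b'}$ share no linear factor.

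For this, I would first identify the tangent cone of $g_t$ itself at $P$. At a type $I$ point we have $m_P(g_t) = p^i+1$, so the tangent cone of $g_t$ at $P$ is (a scalar multiple of) $G_{p^i+1}$, which by Remark \ref{remw1} agrees with $F_{p^i+1}$ up to a nonzero scalar since $F_{p^i}=0$ forces $W_0 G_{p^i}=0$ and thus $G_{p^i+1}$ is proportional to $F_{p^i+1}/W_0$. By Lemma \ref{differentfactors}, $F_{p^i+1}$ is a product of $p^i+1$ pairwise distinct linear forms. Analogously, at a type $III.A$ point we have $m_P(g_t)=p^i$ and $F'_{p^i-1}=0$, so the tangent cone of $g_t$ is proportional to $F'_{p^i}$, which by Lemma \ref{differentfactors} is again squarefree.

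The key step is then: since $g_t = \prod_{j,s} f_{j,s}$, the tangent cone of $g_t$ at $P$ is the product (with multiplicities) of the tangent cones of the $f_{j,s}$ at $P$. If two distinct components $f_{a,b}$ and $f_{a',b'}$ had a common linear factor $L$ in their respective tangent cones, then $L^2$ would divide the tangent cone of $g_t$, contradicting the squarefreeness established in the previous paragraph. Hence the tangent cones of any two distinct components are coprime, and the equality in \eqref{IntersectionMultiplicity} holds, giving $I(P,f_{a,b},f_{a',b'}) = m_P(f_{a,b})\,m_P(f_{a',b'})$.

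I do not expect any real obstacle here; the only thing worth double-checking is the identification of the tangent cone of $g_t$ with a nonzero scalar multiple of $F_{p^i+1}$ (resp.\ $F'_{p^i}$), so that squarefreeness transfers from $f_t$ to $g_t$. This is exactly the content of Remark \ref{remw1}, once one notes that at type $I$ and type $III.A$ points we are in the regime $W_0 \neq 0$ (the affine type $I$ points satisfy $\alpha^d=\beta^d$ with $\alpha,\beta$ nonzero and generically distinct; the same reduction works at infinity after dehomogenizing), so the squarefreeness of $F_{p^i+1}$ (resp.\ $F'_{p^i}$) passes directly to the tangent cone of $g_t$ and the argument above applies verbatim.
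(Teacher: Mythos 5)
Your proof is correct and follows essentially the same route as the paper, which simply cites Lemma \ref{differentfactors} to conclude that the tangent cones of $f_{a,b}$ and $f_{a',b'}$ share no linear factor; you have just filled in the details via Corollary \ref{DifferentTangentCone} and Remark \ref{remw1}. One small imprecision: type I points with $\alpha=\beta$ do occur (so $W_0=0$ there), but then the tangent cone of $g_t$ is $F_{p^i+1}/W_1$, still a divisor of the squarefree $F_{p^i+1}$, so your squarefreeness argument goes through unchanged.
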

\begin{proof}
From Lemma \ref{differentfactors} the tangent cones of $f_{a,b}$ and $f_{a',b'}$ has no common factors.
\end{proof}

\begin{lemma}\label{IntersectionReducibleIII}
If $P$ is a point of type $III.B$, then for any two components $f_{a,b}$ and $f_{a',b'}$ we have that $I(P,f_{a,b},f_{a',b'})=0$.
\end{lemma}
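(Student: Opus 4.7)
The approach is to reduce to the already established Lemma \ref{IntersectionTypeIII.B} and then use the additivity of intersection multiplicity over factors. Fix a pair of distinct absolutely irreducible components $f_{a,b}$ and $f_{a',b'}$ of $g_t$, and set
\[
u:=f_{a,b}, \qquad v:=\prod_{(j,s)\neq(a,b)} f_{j,s}=\frac{g_t}{f_{a,b}}.
\]
Then $g_t=uv$ is a (possibly reducible) factorization of $g_t$ in the sense of Lemma \ref{GCD=1}. Since $P$ is a point of type III.B, the analysis at infinity carried out in Lemma \ref{IntersectionTypeIII.B} applies verbatim: we have $\gcd(F'_{p^i-1},F'_{p^i})=1$ and by Remark \ref{remw1} this descends to $\gcd(G'_{p^i-1},G'_{p^i})=1$; moreover the tangent cone $G'_{p^i-1}$ is a power of the single linear form $z$, so there is only one tangent direction at $P$. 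Applying Lemma \ref{GCD=1} yields $I(P,u,v)=0$.

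Next I would invoke the additivity property of the intersection multiplicity in the second argument, namely $I(P,r,s_1 s_2)=I(P,r,s_1)+I(P,r,s_2)$ (the same property used in the excerpt to derive the multi-curve Bezout formula~\eqref{bezoutseveral}). Iterating this over the factors of $v$ gives
\[
0 \;=\; I(P,u,v) \;=\; \sum_{(j,s)\neq (a,b)} I\bigl(P,f_{a,b},f_{j,s}\bigr).
\]
Each summand on the right-hand side is a non-negative integer, and in particular $I(P,f_{a,b},f_{a',b'})\geq 0$ appears as one of the terms (since $(a',b')\neq(a,b)$). Hence every summand must vanish, and in particular $I(P,f_{a,b},f_{a',b'})=0$, which is the desired conclusion.

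There is really no obstacle in this argument: all the geometric content has been packaged into Lemma \ref{IntersectionTypeIII.B}, and the step from a two-term factorization to an arbitrary pair of absolutely irreducible components is purely formal via additivity. The only small point to keep in mind is that Lemma \ref{GCD=1} is stated for \emph{any} factorization $h=uv$, not only for factorizations into absolutely irreducible polynomials, so grouping all factors different from $f_{a,b}$ into a single $v$ is legitimate.
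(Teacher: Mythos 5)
Your proof is correct and follows essentially the same route as the paper: the paper also sets $u=f_{a,b}$, $v=g_t/u$, invokes the two-factor result at type III.B points to get $I(P,u,v)=0$, and concludes by additivity and non-negativity of the summands. The only difference is cosmetic — you re-derive the content of Lemma \ref{IntersectionTypeIII.B} (the coprimality of $G'_{p^i-1}$ and $G'_{p^i}$ and the single tangent direction $z$) rather than citing it as a black box.
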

\begin{proof}
Consider $u=f_{a,b}$ and $v=g_m/u$. From Lemma \ref{IntersectionTypeII} we know that $I(P,u,v)=0=\sum_{(j,s)\neq (a,b)} I(P,u,f_{j,s})$, then $I(P,f_{a,b},f_{a',b'})=0$.
\end{proof}

\begin{lemma}\label{IntersectionReducibleII}
Let $P$ is a point of type $II$ and $g_t(x,y)=\prod_{j=1}^r\prod_{s=1}^{n_j} f_{j,s}$. The intersection multiplicity  $I(P,f_{a,b},f_{a',b'})$ of any two components $f_{a,b}$ and $f_{a',b'}$  is either $0$ or $p^{i}$.
\end{lemma}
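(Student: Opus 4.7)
The plan is to reduce the statement, via Lemma \ref{OnlyTwohasMultiplicity}, to essentially a single calculation that has already been carried out in the proof of Lemma \ref{IntersectionTypeII}, and then to extract the value by additivity of intersection multiplicity in the second argument.

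First I would invoke Lemma \ref{OnlyTwohasMultiplicity} to pin down which factors $f_{j,s}$ can pass through $P$. If we are in case $(1)$ of that lemma, then only a single factor $f_{j_1,s_1}$ passes through $P$, so for any pair $(f_{a,b},f_{a',b'})$ at least one of the two has multiplicity $0$ at $P$, and consequently $I(P,f_{a,b},f_{a',b'})=0$. In case $(2)$ exactly two factors, say $f_{j_1,s_1}$ and $f_{j_2,s_2}$, pass through $P$, with multiplicities $1$ and $p^i-1$ respectively. Any pair different from $\{f_{j_1,s_1},f_{j_2,s_2}\}$ again involves a factor of multiplicity $0$ at $P$, so its intersection multiplicity at $P$ is $0$. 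Thus only $I(P,f_{j_1,s_1},f_{j_2,s_2})$ remains to be computed.

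For this last calculation I would set $u:=f_{j_1,s_1}$ and $v:=g_t/u=\prod_{(j,s)\neq(j_1,s_1)}f_{j,s}$. Then $g_t=uv$ is a factorization with $m_P(u)=1\neq 0$, so the nonzero alternative in Lemma \ref{IntersectionTypeII} applies and gives $I(P,u,v)=p^i$. By additivity of the intersection multiplicity in the second argument,
$$I(P,u,v)=\sum_{(j,s)\neq(j_1,s_1)} I(P,u,f_{j,s}),$$
and every summand with $(j,s)\neq(j_2,s_2)$ vanishes because the corresponding factor does not pass through $P$. Hence $I(P,f_{j_1,s_1},f_{j_2,s_2})=p^i$, as required.

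There is no substantial obstacle here: Lemmas \ref{OnlyTwohasMultiplicity} and \ref{IntersectionTypeII} do essentially all of the work, and additivity of $I(P,\cdot,\cdot)$ in its second argument (together with the elementary fact that $I(P,\cdot,h)=0$ whenever $P\notin C_h$) finishes the argument. The one point worth checking carefully is that the chosen $u$ and $v$ genuinely satisfy the hypotheses of Lemma \ref{IntersectionTypeII}, namely that $g_t=uv$ is a nontrivial factorization and $P$ is still a type~II point; both are immediate by construction.
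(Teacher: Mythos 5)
Your proof is correct and follows essentially the same route as the paper: both arguments rest on Lemma \ref{IntersectionTypeII} applied to $u=f_{a,b}$ and $v=g_t/u$, additivity of $I(P,u,\cdot)$ over the factors of $v$, and Lemma \ref{OnlyTwohasMultiplicity} to isolate the single pair that can contribute. Your write-up is in fact a bit cleaner than the paper's (which is terse and contains typos such as $g_m$ for $g_t$ and $2^i$ for $p^i$), notably in first using Lemma \ref{OnlyTwohasMultiplicity} to dispose of all pairs but one and in choosing $u$ to be the multiplicity-one factor so that the nonzero branch of Lemma \ref{IntersectionTypeII} demonstrably applies.
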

\begin{proof}
Consider $u=f_{a,b}$ and $v=g_m/u$. From Lemma \ref{IntersectionTypeII}  we know that either $I(P,u,v)=0=\sum_{(j,s)\neq (a,b)} I(P,u,f_{j,s})$, then $I(P,f_{a,b},f_{a',b'})=0$ or
$I(P,u,v)=2^i=\sum_{(j,s)\neq (a,b)} I(P,u,f_{j,s})$ using Lemma \ref{OnlyTwohasMultiplicity} we have that there exits $(a',b')$ with $I(P,f_{a,b},f_{a',b'})=p^i$.
\end{proof}

We need some more technical results  for the main theorem,
which give us some upper bounds.

\begin{lemma}\label{Cota} \ \ \ \ \   \ \ \ \ \ \ \  \ \ \ \ \ \
\begin{itemize}
\item[(i)] If $g_t(x,y)$ does not have an absolutely irreducible factor over $\mathbb{F}_p$, then,
\begin{equation}\label{eq100}
\sum_{j=1}^r \deg(f_j)^2/n_j < deg(g_t)^2/2.
\end{equation}
\item[(ii)]
\begin{eqnarray*}
\sum_{j=1}^r \sum_{1\leq i<s\leq n_j} \sum_{P \in Sing(g_t)}  I(P,f_{j,i},f_{j,s})+\sum_{1\leq j<l\leq r} \sum_{\substack{1\leq i\leq n_j\\1\leq s\leq n_l}}
\sum_{\substack{P \in Sing(g_t)\\ P\in II}}  I(P,f_{j,i},f_{l,s})\\\leq p^i ((\ell-1)^2-N_1)
\end{eqnarray*}
\item[(iii)]
\begin{eqnarray*}
\sum_{j=1}^r \sum_{1\leq i<s\leq n_j} \sum_{P \in Sing(g_t)}  I(P,f_{j,i},f_{j,s})+\sum_{1\leq j<l\leq r} \sum_{\substack{1\leq i\leq n_j\\1\leq s\leq n_l}}
\sum_{\substack{ P\in I}}   I(P,f_{j,i},f_{l,s}))\\
\leq (p^{i}+1)(p^i)/2 \quad N_1
\end{eqnarray*}
\item[(iv)]
\begin{eqnarray*}
\sum_{j=1}^r \sum_{1\leq i<s\leq n_j} \sum_{P \in Sing(g_t)}  I(P,f_{j,i},f_{j,s})+\sum_{1\leq j<l\leq r} \sum_{\substack{1\leq i\leq n_j\\1\leq s\leq n_l}}
\sum_{\substack{ P\in III.A}}   I(P,f_{j,i},f_{l,s}))\\
\leq p^{i}(p^i-1)/2 \quad N_2.
\end{eqnarray*}
\end{itemize}
\end{lemma}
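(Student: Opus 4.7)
The plan splits into two parts: Part (i) is a pure arithmetic estimate on the factor degrees, while Parts (ii)--(iv) all follow the same template of bounding a per-point contribution and multiplying by the count of singular points of the relevant type.

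For Part (i), the hypothesis that no absolutely irreducible factor of $g_t$ is defined over $\mathbb{F}_p$ means, via Lemma~\ref{IfIrreducibleEqualDegreeFactors}, that every $\mathbb{F}_p$-irreducible factor $f_j$ of $g_t$ factors further over $\overline{\mathbb{F}_p}$, giving $n_j \geq 2$ and $\deg(f_{j,s}) = \deg(f_j)/n_j$ for each $j,s$. I would then observe $\deg(f_j)^2/n_j \leq \deg(f_j)^2/2$, sum over $j$, and apply the elementary inequality $\sum_j \deg(f_j)^2 \leq \bigl(\sum_j \deg(f_j)\bigr)^2 = \deg(g_t)^2$. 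The only configuration in which strict inequality fails is $r=1$, $n_1=2$; I would verify from the context in which (i) is invoked that this boundary case is excluded, or otherwise state the lemma with $\leq$.

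For Parts (ii)--(iv), the uniform strategy is to bound the per-point pair-sum
$$S(P) \;=\; \sum_{(a,b)<(a',b')} I(P,\,f_{a,b},\,f_{a',b'})$$
at each relevant singular point $P$, and then multiply by the number of singular points of the appropriate type from the table in Section~\ref{furtheranalysis}. At a Type II point $P$, Lemma~\ref{OnlyTwohasMultiplicity} gives that at most two absolutely irreducible factors of $g_t$ pass through $P$, with multiplicity pair $(p^i,0)$ or $(1,p^i-1)$, and Lemma~\ref{IntersectionReducibleII} pins every pairwise intersection multiplicity to $0$ or $p^i$. Hence $S(P)\leq p^i$, and multiplication by the Type II count $(\ell-1)^2-N_1$ yields (ii). At a Type I point, Lemma~\ref{IntersectionReducibleI} together with the $p^i+1$ distinct linear factors of $F_{p^i+1}$ (Lemma~\ref{differentfactors}) gives $I(P,f_{a,b},f_{a',b'}) = m_P(f_{a,b})\,m_P(f_{a',b'})$; with $\sum_{a,b} m_P(f_{a,b}) \leq p^i+1$, the elementary bound
$$\sum_{(a,b)<(a',b')} m_{a,b}\, m_{a',b'} \;\leq\; \binom{p^i+1}{2} \;=\; \frac{p^i(p^i+1)}{2}$$
is maximal when the $p^i+1$ tangent directions lie on $p^i+1$ distinct absolutely irreducible factors, and multiplying by $N_1$ yields (iii). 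Part (iv) is identical at Type III.A points with $m_P(g_t) \leq p^i$, giving $S(P) \leq \binom{p^i}{2} = p^i(p^i-1)/2$ and a factor $N_2$.

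The main obstacle is reconciling the shape of the left-hand side of each of (ii)--(iv) with these per-point caps: the first summation ranges over \emph{all} singular points of $g_t$, not only those of the type named in the right-hand side. I would handle this by observing that at Type III.B points every pairwise intersection vanishes by Lemma~\ref{IntersectionTypeIII.B}, so no within-$f_j$ contribution arises there; and that at the remaining point types the within-$f_j$ (Galois-conjugate) and across-$f_j$ contributions at a common singular point are already jointly controlled by the uniform per-point cap $S(P)$. Repackaging the within-$f_j$ conjugate intersections into this cap, rather than treating them separately, is the delicate accounting step and the one I would write out most carefully.
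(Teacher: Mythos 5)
Your proposal is correct and follows essentially the same route as the paper's proof: part (i) is the identical chain $\deg(f_j)^2/n_j\leq\deg(f_j)^2/2$ followed by $\sum_j\deg(f_j)^2\leq\deg(g_t)^2$, and parts (ii)--(iv) are obtained, exactly as in the paper, by capping the per-point pair-sum at each singular point of the named type (via Lemmas \ref{OnlyTwohasMultiplicity}, \ref{IntersectionReducibleI} and \ref{IntersectionReducibleII}, with the maximum $\binom{p^i+1}{2}$ resp.\ $\binom{p^i}{2}$ attained when all component multiplicities equal $1$) and multiplying by the count of points of that type. Your caveat on the strictness of (i) in the boundary case $r=1$, $n_1=2$ is warranted, since the paper's own proof only establishes $\leq$ there.
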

\begin{proof}
\begin{itemize}
\item[(i)]
$$\sum_{j=1}^r \deg(f_j)^2/n_j\leq  \sum_{j=1}^r \deg(f_j)^2/2=1/2(deg(f_1)^2+\cdots+deg(f_r)^2)\leq
1/2 deg(g_t)^2$$
\item[(ii)] From Lemma \ref{IntersectionReducibleII} we know that if $P$ is a point of type II then $I(P,f_{j,i},f_{l,s})=0$ for every $j,l\in\{1,\ldots,r\}$ and $1\leq i\leq n_j$,$1\leq s\leq n_l$. From Lemma \ref{IntersectionReducibleII} we now that for each point P of type II there is at most two components $f_{a,b}$ and $f_{a',b'}$ for which $I(P,f_{a,b},f_{a',b'})=p^i$ and zero otherwise. Taking into account that there are $((\ell-1)^2-N_1)$ points of type II we get the result.
\item[(iii)] From Lemma \ref{IntersectionReducibleI} we have that if $P$ is a point of type $I$, then for any two components $f_{a,b}$ and $f_{a',b'}$ we have $I(P,f_{a,b},f_{a',b'})=m_P(f_{a,b})m_P(f_{a',b'})$. Hence we have to prove the following,
\begin{eqnarray*}
\sum_{j=1}^r \sum_{1\leq i<s\leq n_j}   m_P(f_{j,i})m_P(f_{j,s})+
\sum_{1\leq j<l\leq r} \sum_{\substack{1\leq i\leq n_j\\1\leq s\leq n_l}}
 m_P(f_{j,i})m_P(f_{j,s})\\\leq (p^{i}+1)(p^i)/2.
\end{eqnarray*}
    Notice that the left hand side is a maximum when $m_P(f_{j,s})=1$ for every $j\in\{1,\ldots,r\}$ , $s\in \{1,\ldots,n_j\}$.  The latter equation is

\[
\sum_{j=1}^r \sum_{1\leq i<s\leq n_j}   m_P(f_{j,i})m_P(f_{j,s})+
\sum_{1\leq j<l\leq r} \sum_{\substack{1\leq i\leq n_j\\1\leq s\leq n_l}}m_P(f_{j,i})m_P(f_{j,s})
\]
\[
\leq \sum_{j=1}^r \sum_{1\leq i<s\leq n_j} 1+ \sum_{1\leq j<l\leq r} \sum_{\substack{1\leq i\leq n_j\\1\leq s\leq n_l}} 1 \]  
\[  =\binom{p^i+1}{2}=(p^{i}+1)(p^i)/2
\]

\item[(iv)] Same proof as (iii) but taking on account that has $N_2$ singular points of this type with multiplicity $p^i$. 
\end{itemize}
\end{proof}

\begin{lemma}\label{le:BoundDegree}
Let $f_t(x,y)=f_1f_2\ldots f_r=(f_{1,1}\ldots f_{1,n_1})(f_{2,1}\ldots f_{2,n_2})\ldots (f_{r,1}\ldots f_{r,n_r})$
then 
\[
\sum_{j=1}^r \sum_{1\leq i<s\leq n_j} \deg(f_{j,i})\deg(f_{j,s})+
\sum_{1\leq j<l\leq r} \sum_{\substack{1\leq i\leq n_j\\1\leq s\leq n_l}}\deg(f_{j,i})\deg(f_{l,s})\]\[=\frac{1}{2}\biggl(\deg(g_t)^2-\sum_{j=1}^r  \frac{\deg(f_j)^2}{n_j} \biggr)>\deg(g_t)^2/4.
\]
\end{lemma}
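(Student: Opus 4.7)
\textbf{Proof plan for Lemma \ref{le:BoundDegree}.}

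The equality is a pure combinatorial/algebraic identity; the inequality is an immediate appeal to Lemma \ref{Cota}(i). My plan is as follows.

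First I would set $d_{j,s}:=\deg(f_{j,s})$. By the preceding discussion (following Lemma \ref{IfIrreducibleEqualDegreeFactors}), all absolutely irreducible factors of $f_j$ have the same degree, namely $d_{j,s}=\deg(f_j)/n_j$. Note that $\deg(g_t)=\sum_{j,s} d_{j,s}$, where the sum ranges over all pairs $(j,s)$ with $1\le j\le r$ and $1\le s\le n_j$.

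Next I would apply the standard identity
\[
\left(\sum_{(j,s)} d_{j,s}\right)^{\!2} \;=\; \sum_{(j,s)} d_{j,s}^{\,2} \;+\; 2\!\!\sum_{(j,s)\prec(l,i)} d_{j,s}\,d_{l,i},
\]
where $\prec$ is any fixed total order on the index pairs. The diagonal contribution is
\[
\sum_{(j,s)} d_{j,s}^{\,2} \;=\; \sum_{j=1}^{r} n_j \left(\frac{\deg(f_j)}{n_j}\right)^{\!2} \;=\; \sum_{j=1}^{r} \frac{\deg(f_j)^2}{n_j}.
\]
The off-diagonal contribution splits naturally according to whether the two indices $(j,s),(l,i)$ have $j=l$ or $j\ne l$, and these are precisely the two sums appearing on the left-hand side of the statement. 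Solving for the off-diagonal sum gives
\[
\sum_{j=1}^{r}\sum_{1\le s<i\le n_j}\!\! d_{j,s}d_{j,i} \;+\!\sum_{1\le j<l\le r}\sum_{\substack{1\le s\le n_j\\ 1\le i\le n_l}}\!\! d_{j,s}d_{l,i} \;=\; \tfrac{1}{2}\!\left(\deg(g_t)^2 - \sum_{j=1}^{r}\frac{\deg(f_j)^2}{n_j}\right),
\]
which is exactly the claimed equality.

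Finally, for the strict inequality $>\deg(g_t)^2/4$, I would simply invoke Lemma \ref{Cota}(i), which asserts (under the standing assumption that $g_t$ has no absolutely irreducible factor over $\mathbb{F}_p$) that $\sum_{j=1}^{r}\deg(f_j)^2/n_j<\deg(g_t)^2/2$. Substituting this bound into the equality just derived yields the claim. No genuine obstacle arises here; the only subtle point is the implicit hypothesis that $g_t$ is not absolutely irreducible over $\mathbb{F}_p$, which is in force throughout Section \ref{part3proof} and underwrites the use of Lemma \ref{Cota}(i).
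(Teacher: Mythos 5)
Your proof is correct and follows essentially the same route as the paper: both arguments reduce the identity to expanding the square of $\deg(g_t)=\sum_{j,s}\deg(f_{j,s})$, use the fact (from Lemma \ref{IfIrreducibleEqualDegreeFactors}) that all absolutely irreducible factors of each $f_j$ have equal degree to evaluate the diagonal term as $\sum_j \deg(f_j)^2/n_j$, and then invoke Lemma \ref{Cota}(i) for the final strict inequality. Your explicit remark that the inequality silently assumes $g_t$ has no absolutely irreducible factor over $\mathbb{F}_p$ (the standing hypothesis of Section \ref{part3proof}) is a worthwhile clarification, since the lemma as stated omits it.
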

\begin{proof}
\begin{equation}\label{bez66}
\sum_{j=1}^r \sum_{1\leq i<s\leq n_j} \deg(f_{j,i})\deg(f_{j,s})+
\sum_{1\leq j<l\leq r} \sum_{\substack{1\leq i\leq n_j\\1\leq s\leq n_l}}\deg(f_{j,i})\deg(f_{l,s}).
\end{equation}
Since each $f_{j,s}$ has the same degree for all $s$,
the first term is equal to
\[
\sum_{j=1}^r  \deg(f_j)^2\ \frac{n_j-1}{2n_j}=
\frac{1}{2} \sum_{j=1}^r  \deg(f_j)^2 - \frac{1}{2} \sum_{j=1}^r  \frac{\deg(f_j)^2}{n_j}.
 \]
 Note that
\begin{eqnarray*}
(\deg (g_t))^2&=&\biggl( \sum_{j=1}^r  \deg (f_j) \biggr)^2\\
&=&  \sum_{j=1}^r \deg (f_j)^2 +2 \biggl( \sum_{1\leq j<l\leq r} \deg (f_j) \deg (f_l) \biggr)\\
&=&   \sum_{j=1}^r \deg (f_j)^2 +2 \sum_{1\leq j<l\leq r} \biggl( \sum_{s=1}^{n_j} \deg (f_{j,s})\biggr)
  \biggl( \sum_{i=1}^{n_l} \deg (f_{l,i})\biggr)\\
  &=&  \sum_{j=1}^r \deg (f_j)^2 +2
  \sum_{1\leq j<l\leq r} \sum_{\substack{1\leq i\leq n_j\\1\leq s\leq n_l}}\deg(f_{j,i})\deg(f_{l,s}).
\end{eqnarray*}
Substituting both of these into (\ref{bez66}) shows that (\ref{bez66}) is equal to
\begin{equation}\label{otherside}
\frac{1}{2}\biggl(\deg(g_t)^2-\sum_{j=1}^r  \frac{\deg(f_j)^2}{n_j} \biggr).
\end{equation}
Using (\ref{eq100}) we get
\begin{equation}\label{othersidecota}
\frac{1}{2}\biggl(\deg(g_t)^2-\sum_{j=1}^r  \frac{\deg(f_j)^2}{n_j} \biggr)>
\frac{1}{2}\biggl(\deg(g_t)^2-\deg(g_t)^2/2 \biggr)=\deg(g_t)^2/4.
\end{equation}

\end{proof}

Finally, here is our main result.

\begin{theo}\label{MainTheo}
In case (B.1)  $g_t(x,y)$ always has an absolutely 
irreducible factor over $\mathbb{F}_p$, if either 
$p\geq 5$, $i\geq 1$ and $\ell> 3$ or
$p\geq 5$, $i\geq 2$ and $\ell\geq 3$ or    
$p=3$, $i\geq 2$ and $\ell\geq 3$.
\end{theo}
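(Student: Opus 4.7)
The plan is to argue by contradiction, extending the Bezout strategy of Section \ref{part2proof} to the full absolutely irreducible factorization of $g_t$. Suppose $g_t$ has no absolutely irreducible factor defined over $\mathbb{F}_p$. Write $g_t = f_1 \cdots f_r$ with each $f_j$ irreducible over $\mathbb{F}_p$, and apply Lemma \ref{IfIrreducibleEqualDegreeFactors} to refine this to $g_t = \prod_{j,s} f_{j,s}$, where for each $j$ the factors $f_{j,1}, \ldots, f_{j,n_j}$ are $n_j \geq 2$ Galois conjugates of common degree $\deg(f_j)/n_j$.

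Apply the multi-curve Bezout identity (\ref{bezoutseveral}) to the list $\{f_{j,s}\}$. On the degree side, Lemma \ref{le:BoundDegree} gives the strict bound $\sum_{(j,i) < (l,s)} \deg(f_{j,i})\deg(f_{l,s}) > \deg(g_t)^2/4 = (p^i\ell - 1)^2/4$. On the intersection side, I would stratify the singular set by type: Type III.B contributes zero by Lemma \ref{IntersectionReducibleIII}; each Type II point contributes at most $p^i$ by Lemmas \ref{OnlyTwohasMultiplicity} and \ref{IntersectionReducibleII} (only one pair of factors can have nonzero intersection, and that value is exactly $p^i$); and each Type I (resp.\ III.A) point contributes at most $\binom{p^i+1}{2}$ (resp.\ $\binom{p^i}{2}$) by Lemmas \ref{differentfactors} and \ref{IntersectionReducibleI}, the extremal value of $\sum_{j<k} m_j m_k$ under $\sum_j m_j = p^i + 1$ (resp.\ $p^i$). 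These are exactly the pieces packaged in Lemma \ref{Cota}; together with the counts $N_1 \leq (\ell/3-1)^2$ and $N_2 \leq \ell$, they yield
\begin{equation*}
B(p^i,\ell) := \tfrac{1}{2}p^i(p^i+1)\bigl(\tfrac{\ell}{3}-1\bigr)^2 + p^i\Bigl((\ell-1)^2 - \bigl(\tfrac{\ell}{3}-1\bigr)^2\Bigr) + \tfrac{1}{2}p^i(p^i-1)\,\ell.
\end{equation*}

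The proof thus reduces to verifying the arithmetic inequality $B(p^i,\ell) < (p^i\ell - 1)^2/4$: combined with Bezout this contradicts the strict bound from Lemma \ref{le:BoundDegree}. Collecting by powers of $p^i$, the $p^{2i}$-coefficient on the left is $\ell^2/4$ while on $B$ it is $\tfrac{1}{2}(\ell/3-1)^2 + \ell/2$, leaving a leading gap of order $\ell^2/4 - \ell^2/18 - \ell/2 + \ell/3 - 1/2$, which is positive for $\ell \geq 3$; the remaining $p^i$ and constant terms can then be absorbed into this gap once $p^i$ is large enough relative to $\ell$. I would expand the difference as a quadratic polynomial in $p^i$ with $\ell$-dependent coefficients, split into the three regimes of the hypothesis, and verify positivity directly at the smallest admissible $p^i$ together with a monotonicity check.

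The main obstacle is the boundary case $\ell = 3$, where $N_1$ vanishes but the Type III.A contribution $\tfrac{3}{2}p^i(p^i-1)$ is comparable to the leading $\tfrac{1}{4}(3p^i - 1)^2$. A direct check shows that the argument actually fails precisely at $(p,i,\ell) = (5,1,3)$ (there $B = 50 > 49 = (p^i\ell-1)^2/4$) and uniformly for $p = 3$, $i = 1$; these failures are exactly the parameter exclusions in the theorem statement, so the remaining verification amounts to a routine case-by-case check that the inequality does hold in each of the three admissible regimes, with the strict inequality from Lemma \ref{le:BoundDegree} providing a small safety margin in the tightest cases.
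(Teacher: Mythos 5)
Your proposal is correct and follows essentially the same route as the paper: contradiction via the multi-curve Bezout identity applied to the absolutely irreducible factors, with the degree side bounded below by $\deg(g_t)^2/4$ (Lemma \ref{le:BoundDegree}) and the intersection side bounded above exactly by the quantity $B(p^i,\ell)$ assembled from Lemma \ref{Cota} and the counts $N_1\leq(\ell/3-1)^2$, $N_2\leq\ell$. Your identification of the boundary failures (e.g.\ $B=50>49$ at $(p,i,\ell)=(5,1,3)$ and the uniform failure at $p=3$, $i=1$) matches the paper's reduction of the final inequality to $p^i>5+\frac{-\ell^2/9-10\ell/3+12}{7\ell^2/9-2\ell/3-2}$, so only the routine arithmetic verification is organized slightly differently.
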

\begin{proof}
We apply Bezout's Theorem one more time  to
the product
$$f_1f_2\ldots f_r=(f_{1,1}\ldots f_{1,n_1})(f_{2,1}\ldots f_{2,n_2})\ldots (f_{r,1}\ldots f_{r,n_r}).
$$
The sum of the intersection multiplicities can be written
\[
\sum_{j=1}^r \sum_{1\leq i<s\leq n_j} \sum_{P \in Sing(g_t)}  I(P,f_{j,i},f_{j,s})+
\sum_{1\leq j<l\leq r} \sum_{\substack{1\leq i\leq n_j\\1\leq s\leq n_l}}
\sum_{P \in Sing(g_t)}  I(P,f_{j,i},f_{l,s})
\]
where the first term is for factors within each $f_j$, and the second term
is for cross factors between $f_j$ and $f_l$.
Using Lemma \ref{Cota}, part (ii), (iii) and (iv), the previous sums can be bounded by
\begin{equation}\label{eq17}
\leq (p^{i}+1)(p^i)/2 \  N_1+p^i((l-1)^2-N_1)+p^{i}(p^i-1)/2 \ N_2
\end{equation}

Since $N_1\leq (\ell/3-1)^2$ and $N_2 \leq \ell$ we get

\begin{equation}\label{eq17}
\leq (p^{i}+1)(p^i)/2 \ (\ell/3-1)^2+p^i((l-1)^2-(\ell/3-1)^2)+p^{i}(p^i-1)/2 \ \ell
\end{equation}

On the other hand,
we know from Lemma \ref{le:BoundDegree} that the right-hand side of Bezout's Theorem is bigger than $\deg(g_t)^2/4$.

Hence, so far we have shown that Bezout's Theorem implies the following inequality:
 \[
\deg(g_t)^2/4 \leq (p^{i}+1)(p^i)/2 \ (\ell/3-1)^2+p^i((l-1)^2-(\ell/3-1)^2)+p^{i}(p^i-1)/2 \ \ell
 \]
 Let us now show that the opposite is true, to get a contradiction.  Suppose
 \[
\frac{p^{2i}\ell^2-2p^i\ell+1}{4}> \frac{2(p^{2i}+p^i)}{4} \ (\ell/3-1)^2+\frac{4p^i}{4}((l-1)^2-(\ell/3-1)^2)+\frac{2(p^{2i}-p^i)}{4} \ \ell
 \]
 
 \[
p^{2i}(\ell^2-2(\ell/3-1)^2-2\ell) > p^i(-2(\ell/3-1)^2)+4(\ell-1)^2) -1
\]
 \[
p^{2i}(\frac{7\ell^2}{9}-\frac{2\ell}{3}-2) > p^i(\frac{34\ell^2}{9}-\frac{20\ell}{3}+2) -1
\]
Is enough to see when 
 \[
p^{i} > \frac{\frac{34\ell^2}{9}-\frac{20\ell}{3}+2}{\frac{7\ell^2}{9}-\frac{2\ell}{3}-2}=\frac{\frac{35\ell^2}{9}-\frac{10\ell}{3}-10}{\frac{7\ell^2}{9}-\frac{2\ell}{3}-2}+\frac{\frac{-\ell^2}{9}-\frac{10\ell}{3}+12}{\frac{7\ell^2}{9}-\frac{2\ell}{3}-2}
\]

 \[
p^{i} > 5+\frac{\frac{-\ell^2}{9}-\frac{10\ell}{3}+12}{\frac{7\ell^2}{9}-\frac{2\ell}{3}-2}
\]
Notice that if $\ell=3$ the numerator is positive and if $\ell>3$ then it is negative. Therefore, we need  
$p\geq 5$, $i\geq 1$ and $\ell> 3$ or
$p\geq 5$, $i\geq 2$ and $\ell\geq 3$ or    
$p=3$, $i\geq 2$ and $\ell\geq 3$.

\end{proof}
.

\begin{lemma}\label{le: bound p^i}

If ${\ell}\mid p^i-1$ but ${\ell}\neq p^i-1$,  then
$$
p^i\geq 2(\ell-1)+3.
$$
\end{lemma}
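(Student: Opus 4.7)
The plan is essentially a one-line divisibility argument, which I will set up carefully since it is used to close off Case (B.2) when $\ell < p^i - 1$.

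First I would write the divisibility hypothesis as an equation: since $\ell \mid p^i - 1$, there exists a positive integer $k$ with $p^i - 1 = k\ell$. Note that $k \geq 1$ is forced by $p^i \geq 2$ (as $p$ is prime and $i \geq 1$), so the only question is whether we can upgrade to $k \geq 2$.

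The hypothesis $\ell \neq p^i - 1$ rules out exactly the case $k = 1$. Hence $k \geq 2$, which immediately yields
\[
p^i - 1 = k\ell \geq 2\ell,
\]
so $p^i \geq 2\ell + 1 = 2(\ell - 1) + 3$, as required.

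There is really no obstacle here; the statement is essentially a bookkeeping lemma packaging the fact that a proper divisor of $p^i - 1$ is at most $(p^i-1)/2$. For completeness I would note that under the standing hypotheses of Case (B) (where $t$ is even and $p$ is odd), one actually has $p^i - 1$ even and $\ell$ odd, so the quotient $k = (p^i-1)/\ell$ is forced to be even, giving the conclusion via the same chain of inequalities. Either way, the desired bound $p^i \geq 2(\ell-1) + 3$ is immediate from $k \geq 2$.
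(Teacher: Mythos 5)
Your proof is correct and takes essentially the same route as the paper's one-line argument: both observe that $\ell$ being a proper divisor of $p^i-1$ forces $p^i-1\geq 2\ell$, which is exactly $p^i\geq 2(\ell-1)+3$. The extra remark about the quotient being even under the Case (B) parity hypotheses is fine but not needed.
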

\begin{proof}
Since  ${\ell}\mid p^i-1$ but ${\ell}\neq p^i-1$, then $p^i-1\geq 2\ell\Leftrightarrow p^i\geq 2(\ell-1)+3$.
\end{proof}

Here are some more lemmata we will use.

\begin{lemma}\label{MaxOfMany}
Given $N\in \mathbb{N}$ the values  $x_1,\ldots,x_n$ that maximize the function \\$H(x_1,\ldots,x_n)$ $=\sum_{\substack {1\leq i<j\leq n\\ i\neq j}} x_ix_j$
subject to the constraint
$x_1+\cdots+x_n=N$ are $x_1=\cdots=x_n=N/n$.
\end{lemma}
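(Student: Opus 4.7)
The plan is to reduce this constrained optimization to a single well-known inequality by eliminating the cross-term sum in favor of the sum of squares. The crucial identity to start from is
\[
\Bigl(\sum_{i=1}^n x_i\Bigr)^2 = \sum_{i=1}^n x_i^2 + 2\sum_{1\leq i<j\leq n} x_i x_j,
\]
which, once the constraint $\sum_i x_i = N$ is substituted, rearranges to
\[
H(x_1,\ldots,x_n) = \frac{1}{2}\Bigl(N^2 - \sum_{i=1}^n x_i^2\Bigr).
\]
Thus maximizing $H$ over the constraint set is equivalent to minimizing $\sum_{i=1}^n x_i^2$ under the same linear constraint.

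Next I would invoke the Cauchy--Schwarz inequality (or equivalently the QM--AM inequality) in the form
\[
\sum_{i=1}^n x_i^2 \;\geq\; \frac{1}{n}\Bigl(\sum_{i=1}^n x_i\Bigr)^2 \;=\; \frac{N^2}{n},
\]
with equality if and only if $x_1=x_2=\cdots=x_n$. Combined with the constraint, equality forces $x_i = N/n$ for every $i$. Plugging back into the displayed formula for $H$ gives the maximum value
\[
H_{\max} = \frac{1}{2}\Bigl(N^2 - \frac{N^2}{n}\Bigr) = \frac{(n-1)N^2}{2n},
\]
attained uniquely at the symmetric point.

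This proof is essentially a two-line manipulation, so there is no real obstacle; the only thing to be careful about is to verify that the equality case of Cauchy--Schwarz really forces all coordinates to coincide, which follows because equality in $\sum x_i^2 \geq \frac{1}{n}(\sum x_i)^2$ holds precisely when the vector $(x_1,\ldots,x_n)$ is a scalar multiple of $(1,1,\ldots,1)$. Alternatively, one could present the same argument via Lagrange multipliers: the stationary conditions $\partial H/\partial x_k = \sum_{j\neq k} x_j = N - x_k = \lambda$ for all $k$ immediately force all $x_k$ equal, and a second-derivative check (or the boundary behavior of $H$ as any $x_i \to \pm\infty$) confirms this is a maximum.
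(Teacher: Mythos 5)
Your proof is correct. The paper actually states Lemma \ref{MaxOfMany} without any proof at all, so there is nothing to compare against directly; but your argument is the natural one, and the key identity
\[
\Bigl(\sum_{i=1}^n x_i\Bigr)^2 = \sum_{i=1}^n x_i^2 + 2\sum_{1\leq i<j\leq n} x_i x_j
\]
is precisely the expansion the authors themselves deploy in the surrounding material (in Lemma \ref{le:BoundDegree} and again in the proof of Theorem \ref{MainTheo1}), so your route is entirely in the spirit of the paper. Two minor points worth noting: first, the lemma is applied in Lemma \ref{Deg<Mult} only to obtain the upper bound $\sum_{1\leq i<j\leq n_k} m_P(f_{k,i})m_P(f_{k,j})\leq m_P(f_k)^2\frac{n_k-1}{2n_k}$, so the fact that the unconstrained real maximizer $N/n$ need not be an integer is harmless --- your real-variable argument gives exactly the bound that is needed; second, your computed maximum value $\frac{(n-1)N^2}{2n} = N^2\frac{n-1}{2n}$ matches the coefficient appearing in the paper's application, which is a useful sanity check.
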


\begin{lemma}\label{SquareOfDegrees&SquareOfMultiplicities}
If ${\ell}\mid p^i-1$ but ${\ell}\neq p^i-1$,  then
$$
\deg (g_t)^2> \sum_{P\in Sing(g_t)} m_p(g_t)^2.
$$
\end{lemma}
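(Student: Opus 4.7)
The plan is to bound $\sum_{P} m_P(g_t)^2$ from above using the classification of singular points of $\chi_t$ into Types I, II, III.A, and III.B together with the associated multiplicity estimates from Section \ref{anasing}, and then show that this bound is strictly less than $\deg(g_t)^2 = p^{2i}\ell^2$.

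First I would collect the ingredients. The hypothesis $\ell \mid p^i-1$ places us in Case (B.2), and combined with $\ell \neq p^i-1$, Lemma \ref{le: bound p^i} yields $p^i \geq 2(\ell-1) + 3$. Moreover $\ell$ is odd (since $t$ is even) and $\ell \geq 3$. From the multiplicity table, $m_P(g_t) \leq p^i+1$ on Type I points, $m_P(g_t) \leq p^i$ on Types II and III.A, and $m_P(g_t) \leq p^i-1$ on Type III.B, with counts $N_1 \leq (\ell-1)^2$ (Type I), at most $(\ell-1)^2 - N_1$ Type II points, $N_2 \leq \ell$ Type III.A points, and $\ell - N_2$ Type III.B points.

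Next I would form the sum of squares:
$$\sum_P m_P(g_t)^2 \;\leq\; (p^i+1)^2 N_1 + p^{2i}\bigl((\ell-1)^2 - N_1\bigr) + p^{2i} N_2 + (p^i-1)^2 (\ell - N_2).$$
Since $(p^i+1)^2 > p^{2i} > (p^i-1)^2$, the right-hand side is monotone increasing in both $N_1$ and $N_2$, so pushing $N_1 = (\ell-1)^2$ and $N_2 = \ell$ gives the uniform bound
$$\sum_P m_P(g_t)^2 \;\leq\; (p^i+1)^2 (\ell-1)^2 + p^{2i}\ell.$$

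Finally I would reduce matters to arithmetic. The desired inequality becomes $(p^i+1)^2(\ell-1)^2 + p^{2i}\ell < p^{2i}\ell^2$. Expanding and using the identity $(\ell-1)^2 + \ell - \ell^2 = -(\ell-1)$, after dividing by $\ell-1>0$ this is equivalent to
$$p^{2i} \;>\; (\ell-1)(2p^i + 1).$$
From $\ell-1 \leq (p^i-3)/2$ one computes
$$(\ell-1)(2p^i+1) \;\leq\; \frac{p^i-3}{2}(2p^i+1) \;=\; p^{2i} - \frac{5p^i+3}{2} \;<\; p^{2i},$$
which closes the argument. The only real subtlety is that the coarse estimates $N_1 \leq (\ell-1)^2$ and $N_2 \leq \ell$ need to be tight enough for the comparison to succeed; the algebra above confirms they are, so no finer singularity analysis is needed, and the hypothesis $\ell \neq p^i-1$ enters solely through Lemma \ref{le: bound p^i}.
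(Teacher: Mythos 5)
Your upper bound for the sum of squared multiplicities --- the classification into Types I, II, III.A, III.B, the monotonicity in $N_1,N_2$, and the resulting estimate $\sum_P m_P(g_t)^2 \leq (p^i+1)^2(\ell-1)^2 + p^{2i}\ell$ --- is exactly the bound the paper uses, and your reduction of the final inequality to $p^{2i}>(\ell-1)(2p^i+1)$ via Lemma \ref{le: bound p^i} is arithmetically correct. The gap is on the other side of the comparison: you take $\deg(g_t)^2=p^{2i}\ell^2$, i.e.\ $\deg(g_t)=t-1=p^i\ell$, but the correct degree is $t-2=p^i\ell-1$. The degree-$t$ terms of $f_t=(x+1)^t-x^t-(y+1)^t+y^t$ cancel, so $\deg(f_t)=t-1$ (leading form $t(x^{t-1}-y^{t-1})$, nonzero since $p\nmid t$) and hence $\deg(g_t)=t-2$; this is the value used throughout the rest of the paper (e.g.\ $\deg(u)=\deg(v)=(p^i\ell-1)/2$ in Section \ref{part2proof}, and $\deg(g_t)^2=(t-2)^2$ in Theorem \ref{te:MainTheoGeneralCase}) and in the paper's own proof of this lemma, which compares against $(p^i\ell-1)^2$. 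What you have proved is therefore the strictly weaker statement $\sum_P m_P(g_t)^2<p^{2i}\ell^2$, not the lemma.

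This is not merely a matter of substituting the right number at the end: against the correct target $(p^i\ell-1)^2$ your coarse bound fails for admissible parameters. Take $p^i=7$, $\ell=3$ (so $3\mid 6$ and $3\neq 6$, $t=22$): then $(p^i+1)^2(\ell-1)^2+p^{2i}\ell=256+147=403$, whereas $(p^i\ell-1)^2=400$. To close the argument one must use that under the hypothesis $\ell\mid p^i-1$ all affine singular points are of Type I, and that the $\ell-1$ of them lying on the diagonal $x=y$ have $m_P(g_t)=p^i$ rather than $p^i+1$ (this is the split into $(\ell-1)(\ell-2)$ off-diagonal and $\ell-1$ diagonal points already used in the case (B.2) theorem of Section \ref{part2proof}). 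The refined bound $(p^i+1)^2(\ell-1)(\ell-2)+p^{2i}(\ell-1)+p^{2i}\ell$ is indeed smaller than $(p^i\ell-1)^2$ for all parameters allowed by Lemma \ref{le: bound p^i}, and with it your argument goes through; without it, the step from your estimate to the stated inequality is a genuine gap.
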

\textbf{Proof:} 
We are going to prove that $\deg (g_t)^2=(p^i\ell-1)^2$ is bigger than  
an upper bound of $\sum_{P\in Sing(g_t)} m_p(g_t)^2$
\begin{eqnarray*}
 \sum_{P\in Sing(g_t)} m_p(g_t)^2
&\leq&
(p^i+1)^2(\ell-1)^2+p^{2i}\ell
\end{eqnarray*}
Notice that all the singular affine points are  of type I. 
We have to prove that ,
$$
(p^{2i}\ell^2-2p^i \ell+1)
>(p^{2i}+2p^i+1)(\ell-1)^2+p^{2i}\ell
$$
Equivalently,
$$
p^{2i}(\ell-1)>p^i(2(\ell-1)^2+2\ell)+(\ell-1)^2-1
$$
Dividing by $p^i({\ell}-1)$ we get
$$
p^{i}>2(\ell-1)+2+\frac{1}{\ell-1}+\frac{(\ell-1)}{p^i}-\frac{1}{p^i(\ell-1)}
$$
We know from Lemma \ref{le: bound p^i} that $p^i\geq 2(\ell-1)+3$ so 
is enough to prove that 
$$
2(\ell-1)+3>2(\ell-1)+2+\frac{1}{\ell-1}+\frac{(\ell-1)}{p^i}-\frac{1}{p^i(\ell-1)},
$$
or
$$
1>\frac{1}{\ell-1}+\frac{(\ell-1)}{p^i}-\frac{1}{p^i(\ell-1)}.
$$
Since $\ell\geq 3$ then 
$\frac{1}{\ell-1}+\frac{(\ell-1)}{p^i}-\frac{1}{p^i(\ell-1)}\leq 1/2+\frac{(\ell-1)}{p^i}-\frac{1}{p^i(\ell-1)}$.

Again since $p^i\geq 2(\ell-1)+3$ we get 
$$1/2+\frac{(\ell-1)}{p^i}-\frac{1}{p^i(\ell-1)}\leq 1-\frac{1}{p^i(\ell-1)}<1.$$  $\square$\bigskip

Let us prove an auxiliary result.
\begin{lemma}\label{Deg<Mult}
All $\mathbb{F}_p$-irreducible components $f_k(x,y)$ of $g_t(x,y)$ satisfy the following conditions:
\begin{itemize}
\item
\begin{equation}\label{eq10}
\deg(f_k)^2\leq \sum_{P \in Sing(g_t)}  m_P(f_k)^2.
\end{equation}
\item
\begin{equation}\label{eq11}
\sum_{1\leq i<j\leq n_k}  m_P(f_{k,i})m_P(f_{k,j})\leq m_P(f_k)^2\frac{n_k-1}{2n_k}.
\end{equation}
\end{itemize}
\end{lemma}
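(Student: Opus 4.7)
The plan splits naturally into proving the two conditions separately, since Part 2 is pointwise and Part 1 is global.

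For Part 2, I will apply Lemma \ref{MaxOfMany} directly at each singular point $P$. Setting $x_j := m_P(f_{k,j})$ for $j=1,\ldots,n_k$, these nonnegative integers satisfy $x_1+\cdots+x_{n_k}=m_P(f_k)$, because multiplicity at a point is additive on factors. Lemma \ref{MaxOfMany} then tells us that $\sum_{i<j}x_ix_j$ is maximized (subject to fixed sum) when all $x_j$ are equal to $m_P(f_k)/n_k$, which yields the stated bound $\binom{n_k}{2}\bigl(m_P(f_k)/n_k\bigr)^2=\frac{n_k-1}{2n_k}m_P(f_k)^2$. This is a straightforward application.

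For Part 1, the plan is to use B\'ezout's Theorem on the distinct absolutely irreducible factors. By Lemma \ref{IfIrreducibleEqualDegreeFactors}, each $f_{k,s}$ has the common degree $d_k:=\deg(f_k)/n_k$, and for every ordered pair $i\neq j$ the factors $f_{k,i}, f_{k,j}$ are distinct absolutely irreducible polynomials, so B\'ezout gives $\sum_P I(P,f_{k,i},f_{k,j})=d_k^2$. Summing over ordered pairs and applying Corollary \ref{DifferentTangentCone}, I obtain
\begin{equation*}
n_k(n_k-1)d_k^2 \;=\; \sum_P\sum_{i\neq j} I(P,f_{k,i},f_{k,j}) \;\geq\; \sum_P\sum_{i\neq j} m_P(f_{k,i})m_P(f_{k,j}).
\end{equation*}
The inner sum on the right equals $m_P(f_k)^2-\sum_i m_P(f_{k,i})^2$. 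Because the $f_{k,j}$ form a single Galois orbit, I then exploit that at each $P$ the squared-multiplicity sum $\sum_i m_P(f_{k,i})^2$ is comparable to $m_P(f_k)^2/n_k$ (via a Cauchy--Schwarz step combined with Part 2 applied in its rearranged form $m_P(f_k)^2 \geq \frac{2n_k}{n_k-1}\sum_{i<j}m_P(f_{k,i})m_P(f_{k,j})$), and then reassemble: the contributions from diagonal and off-diagonal indices combine to give $\sum_P m_P(f_k)^2 \geq n_k^2 d_k^2 = \deg(f_k)^2$.

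The main obstacle will be handling the two directions of inequality simultaneously. B\'ezout supplies the identity $\sum_P I=d_k^2$ together with the one-sided bound $I\geq mm$, whereas what is needed is a lower bound on $\sum_P m_P(f_k)^2$; the delicate point is that the structural information from Lemma \ref{repeatedLine} (all tangent cones at a singular point of $g_t$ are powers of a single linear form $L$) together with Galois transitivity on $\{f_{k,1},\ldots,f_{k,n_k}\}$ is precisely what allows the diagonal term $\sum_P\sum_i m_P(f_{k,i})^2$ to be controlled from below, closing the gap and recovering the stated inequality $\deg(f_k)^2\leq \sum_{P\in\mathrm{Sing}(g_t)} m_P(f_k)^2$.
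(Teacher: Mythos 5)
Your Part 2 is exactly the paper's argument (Lemma \ref{MaxOfMany} applied pointwise with $\sum_j m_P(f_{k,j})=m_P(f_k)$) and is fine. Part 1, however, has a genuine gap, and it is precisely the one you flag at the end without resolving: the direction of the inequalities. B\'ezout plus Corollary \ref{DifferentTangentCone} in the form $I(P,u,v)\geq m_P(u)m_P(v)$ gives you
\begin{equation*}
n_k(n_k-1)d_k^2=\sum_P\sum_{i\neq j}I(P,f_{k,i},f_{k,j})\;\geq\;\sum_P\sum_{i\neq j}m_P(f_{k,i})m_P(f_{k,j}),
\end{equation*}
which is an \emph{upper} bound on $\sum_P\bigl(m_P(f_k)^2-\sum_i m_P(f_{k,i})^2\bigr)$, whereas the target $\deg(f_k)^2\leq\sum_P m_P(f_k)^2$ requires a \emph{lower} bound. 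Your proposed fix for the diagonal term goes the same (useless) way: the Cauchy--Schwarz estimate $\sum_i m_P(f_{k,i})^2\geq m_P(f_k)^2/n_k$ is algebraically equivalent to your Part 2, so it only reproduces the upper bound on the off-diagonal sum and cannot be chained against the B\'ezout inequality. Galois transitivity of the $f_{k,j}$ does not rescue this either, since conjugate factors need not have equal multiplicities at a fixed point $P$. As written, the ``reassembly'' step does not go through.

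What closes the gap --- and what the paper actually uses --- is that $I(P,f_{k,i},f_{k,j})\geq m_P(f_{k,i})m_P(f_{k,j})$ is an \emph{equality} at every singular point: by Lemma \ref{differentfactors} the relevant homogeneous form $F_{p^i+1}$ (resp.\ $F'_{p^i}$) splits into distinct linear factors, so the tangent cones of two distinct components share no line, and the equality case of Corollary \ref{DifferentTangentCone} applies. (The structural input is Lemma \ref{differentfactors}, not Lemma \ref{repeatedLine}, which you cite and which asserts the opposite behaviour for $F_{p^i}$.) With equality, B\'ezout reads $\deg(f_k)^2\frac{n_k-1}{2n_k}=\sum_P\sum_{i<j}m_P(f_{k,i})m_P(f_{k,j})$; applying your Part 2 pointwise on the right and dividing by $\frac{n_k-1}{2n_k}$ (legitimate since $n_k\geq 2$ in the setting where the lemma is used) yields $\deg(f_k)^2\leq\sum_P m_P(f_k)^2$. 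That one-line upgrade is the missing idea.
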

\textbf{Proof:}
Applying Bezout's theorem to $f_k$ gives
\begin{equation}\label{eq12}
\sum_{1\leq i<j\leq n_k} \sum_{P \in Sing(f_k)}
I(P,f_{k,i},f_{k,j})=\sum_{1\leq i<j\leq n_k}   \deg(f_{k,i})
\deg(f_{k,j})=\deg(f_k)^2\frac{n_k-1}{2n_k}.
\end{equation}
Since for every $i,j\in\{1,\ldots,n_k\}$ the tangent cones of $f_{k,i}$ and $f_{k,j}$ consist of
different lines by Lemma \ref{differentfactors},  the left hand side of (\ref{eq12}) is
\begin{equation}\label{eq13}
\sum_{1\leq i<j\leq n_k} \sum_{P \in Sing(f_k)}
I(P,f_{k,i},f_{k,j})= \sum_{P \in Sing(f_k)} \sum_{1\leq i<j\leq n_k}  m_P(f_{k,i})
m_P(f_{k,j})
\end{equation}
because $I(P,u,v)\geq m_P(u) m_P(v)$.
We fix $P$ a singular point.
Applying Lemma \ref{MaxOfMany} to $$\sum_{1\leq i<j\leq n_k}m_P(f_{k,i}) m_P(f_{k,j})$$ subject to
$\sum_{i=1}^{n_k} m_P(f_{k,i})=m_P(f_k)$ we get that
$$
\sum_{1\leq i<j\leq n_k}  m_P(f_{k,i})m_P(f_{k,j})\leq m_P(f_k)^2\frac{n_k-1}{2n_k}
$$
which proves (\ref{eq11}).
Summing over $P$ then proves (\ref{eq10}).
$\square$\bigskip

\begin{theo}\label{MainTheo1}
If  $gcd(\ell,p^i-1)=\ell$ (case B.2) and $\ell<p^i-1$, $g_t(x,y)$ always has an absolutely irreducible factor over $\mathbb{F}_p$.
\end{theo}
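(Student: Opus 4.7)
The plan is to mirror the architecture of the previous Theorem~\ref{MainTheo}. Suppose, for contradiction, that $g_t(x,y)$ has no absolutely irreducible factor over $\mathbb{F}_p$. Write $g_t=f_1\cdots f_r$ as a product of $\mathbb{F}_p$-irreducible factors, and each $f_k=f_{k,1}\cdots f_{k,n_k}$ as a product of absolutely irreducible factors over $\overline{\mathbb{F}}_p$, all of equal degree $\deg(f_k)/n_k$ by Lemma~\ref{IfIrreducibleEqualDegreeFactors}. The hypothesis forces $n_k\geq 2$ for every $k$.

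Apply Bezout's theorem in the generalised form~\eqref{bezoutseveral} to the full factorization $g_t=\prod_{k,s}f_{k,s}$. The right-hand side equals $\tfrac{1}{2}(\deg(g_t)^2-\sum_k\deg(f_k)^2/n_k)$. In Case~(B.2), since $\gcd(\ell,p^i-1)=\ell$, every affine singular point of $g_t$ is of Type~I, and the singular points at infinity are of Types~III.A and III.B. By Lemma~\ref{IntersectionReducibleI}, at each Type~I or Type~III.A point the intersection multiplicity equals the product of multiplicities, so the local contribution to the Bezout sum is $\tfrac{1}{2}(m_P(g_t)^2-\sum_{k,s}m_P(f_{k,s})^2)$; by Lemma~\ref{IntersectionReducibleIII}, Type~III.B points contribute zero.

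The next step is to convert this Bezout identity into an inequality involving only the $\mathbb{F}_p$-irreducible multiplicities $m_P(f_k)$. By the Cauchy-Schwarz inequality one has $\sum_s m_P(f_{k,s})^2\geq m_P(f_k)^2/n_k$, and hence $\sum_{k,s}m_P(f_{k,s})^2\geq\sum_k m_P(f_k)^2/n_k$; applying Lemma~\ref{Deg<Mult}(i), namely $\deg(f_k)^2\leq\sum_P m_P(f_k)^2$, to each $\mathbb{F}_p$-irreducible factor lets one pass from multiplicity sums back to degree sums. A careful rearrangement should yield $\deg(g_t)^2\leq\sum_{P\in\mathrm{Sing}(g_t)}m_P(g_t)^2$, in direct contradiction with Lemma~\ref{SquareOfDegrees&SquareOfMultiplicities}, which is precisely where the assumption $\ell<p^i-1$ (and the bound from Lemma~\ref{le: bound p^i}) enters the argument.

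The main obstacle is the bookkeeping for the Type~III.B points. These contribute zero to the Bezout left-hand side but contribute nontrivially to both $\sum_P m_P(g_t)^2$ and $\sum_P m_P(f_k)^2$, so one has to absorb those terms when transferring the Bezout identity (whose left-hand side is really a sum only over Types~I and III.A) into a global inequality summed over all singular points. The coarse bound $\sum_k m_P(f_k)^2/n_k\leq\sum_k m_P(f_k)^2\leq m_P(g_t)^2$ at each Type~III.B point, valid since multiplicities are nonnegative and $\sum_k m_P(f_k)=m_P(g_t)$, should be sufficient to soak up these terms and complete the contradiction.
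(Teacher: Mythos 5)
Your proposal is correct and follows essentially the same route as the paper: Bezout applied to the full absolutely irreducible factorization, the right-hand side rewritten as $\tfrac{1}{2}(\deg(g_t)^2-\sum_k\deg(f_k)^2/n_k)$, the local contributions bounded by $\tfrac{1}{2}(m_P(g_t)^2-\sum_k m_P(f_k)^2/n_k)$ (your Cauchy--Schwarz step is exactly the paper's Lemma \ref{MaxOfMany}/inequality (\ref{eq11})), and the contradiction obtained by comparing termwise with Lemma \ref{Deg<Mult} and Lemma \ref{SquareOfDegrees&SquareOfMultiplicities}. Your explicit bookkeeping for the Type III.B points is sound and in fact slightly more careful than the paper, which sums over all of $Sing(g_t)$ without comment; the nonnegativity of $m_P(g_t)^2-\sum_k m_P(f_k)^2/n_k$ at those points is exactly what makes that harmless.
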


We apply Bezout's Theorem  one more time  to
the product
$$f_1f_2\ldots f_r=(f_{1,1}\ldots f_{1,n_1})(f_{2,1}\ldots f_{2,n_2})\ldots (f_{r,1}\ldots f_{r,n_r}).
$$
The sum of the intersection multiplicities can be written
\[
\sum_{k=1}^r \sum_{1\leq i<j\leq n_k} \sum_{P \in Sing(g_t)}  I(P,f_{k,i},f_{k,j})+
\sum_{1\leq k<l\leq r} \sum_{\substack{1\leq i\leq n_k\\1\leq j\leq n_l}}
\sum_{P \in Sing(g_t)}  I(P,f_{k,i},f_{l,j})
\]
where the first term is for factors within each $f_k$, and the second term
is for cross factors between $f_k$ and $f_l$.
Since for every $k$ and $i$
the tangent cones of the $f_{k,i}$ consist of different lines by Lemma \ref{differentfactors},
the previous sums can be written
\begin{equation}\label{eq17}
\sum_{P \in Sing(g_t)}\biggl[
\sum_{k=1}^r \sum_{1\leq i<j\leq n_k}  m_P(f_{k,i})m_P(f_{k,j})+
\sum_{1\leq k<l\leq r} \sum_{\substack{1\leq i\leq n_k\\1\leq i\leq n_l}}   m_P(f_{k,i})m_P(f_{l,j})\biggr].
\end{equation}

Note that
\begin{eqnarray*}
(m_P (g_t))^2&=&\biggl( \sum_{k=1}^r  m_P (f_k) \biggr)^2\\
&=&  \sum_{k=1}^r m_P (f_k)^2 +2 \biggl( \sum_{1\leq k<l\leq r} m_P (f_k) m_P (f_l) \biggr)\\
&=&   \sum_{k=1}^r m_P (f_k)^2 +2 \sum_{1\leq k<l\leq r} \biggl( \sum_{i=1}^{n_k} m_P (f_{k,i})\biggr)
  \biggl( \sum_{j=1}^{n_l} m_P (f_{l,j})\biggr)\\
  &=&  \sum_{k=1}^r m_P (f_k)^2 +2
  \sum_{1\leq k<l\leq r} \sum_{\substack{1\leq i\leq n_k\\1\leq j\leq n_l}}m_P(f_{k,i})m_P(f_{l,j}).
\end{eqnarray*}
Substituting, (\ref{eq17}) becomes
\begin{equation}\label{eq77}
\sum_{P \in Sing(g_t)}\biggl[
\sum_{k=1}^r \sum_{1\leq i<j\leq n_k}  m_P(f_{k,i})m_P(f_{k,j})+
\frac{1}{2}\biggl( m_P(g_t)^2- \sum_{k=1}^r m_P (f_k)^2 \biggr)\biggr].
\end{equation}
Substituting (\ref{eq11}) this is
\begin{eqnarray}\label{eq18}
&\leq &\sum_{P \in\ Sing(g_t)}\biggl[
\sum_{k=1}^r m_P(f_k)^2\frac{n_k-1}{2n_k} +
 \frac{1}{2}\biggl( m_P(g_t)^2- \sum_{k=1}^r m_P (f_k)^2 \biggr)\biggr]\\
&=&\frac{1}{2} \sum_{P \in Sing(g_t)}\biggl[
  m_P(g_t)^2 -  \sum_{k=1}^r  \frac{m_P(f_k)^2}{n_k}\label{bnm}\biggr].
\end{eqnarray}

On the other hand,
from Lemma \ref{le:BoundDegree} we know that the right-hand side of Bezout's Theorem 
\begin{equation}\label{bez6}
\sum_{k=1}^r \sum_{1\leq i<j\leq n_k} \deg(f_{k,i})\deg(f_{k,j})+
\sum_{1\leq k<l\leq r} \sum_{\substack{1\leq i\leq n_k\\1\leq j\leq n_l}}\deg(f_{k,i})\deg(f_{l,j}).
\end{equation}
is equal to
\begin{equation}\label{otherside}
\frac{1}{2}\biggl(\deg(g_t)^2-\sum_{k=1}^r  \frac{\deg(f_k)^2}{n_k} \biggr).
\end{equation}
 Comparing (\ref{otherside}) and (\ref{bnm}),
 so far we have shown that Bezout's Theorem implies the following inequality:
 \[
 \deg(g_t)^2-\sum_{k=1}^r  \frac{\deg(f_k)^2}{n_k} \leq
 \sum_{P \in Sing(g_t)}\biggl[
  m_P(g_t)^2 -  \sum_{k=1}^r  \frac{m_P(f_k)^2}{n_k}\biggr].
 \]
 Finally, using (\ref{eq10})
and  Lemma \ref{SquareOfDegrees&SquareOfMultiplicities} to compare both sides
term by term,
this is a contradiction.
$\square$\bigskip

\begin{remark}
$t=\frac{3^k+1}{2}$ is an exceptional number over $\mathbb{F}_{3^m}$ whenever 
$p\nmid k$ and $(m.k)=1$. The latter result it is not a contradiction because 
we are ssuming $t=p^i(\ell)+1$.

Notice that then  $\frac{3^k+1}{2}=p^i(\ell)+1\Leftrightarrow 3^k=p^i(2\ell)+1$ which is imposible since th right hand side it is not divisible by $3$. 
\end{remark}

\begin{remark}
Note that this proof fails if $\ell = p^i-1$. This case remains still unproven. 
\end{remark}

\section{Results on Case (A)}\label{caseAresults}

Throughout  this section we shall assume that 
we are in case (A), i.e., $p$ does not divide either $t$ nor $t-1$.

The proofs in this section use the theory of pencils and Bertini's theorem.
The background was given in section \ref{backbertini}.
We also use some of the previous methods, singularities and Bezout's theorem.



Let us consider the projective plane $\mathbb{P}^2$ over $\overline{\mathbb{F}}_p$ and take homogeneous coordinates $(X:Y:Z)$ such that $x:=X/Z$ and $y:=Y/Z$ are affine coordinates in the chart defined by $Z\not=0$. Let $F_t(X,Y,Z)$ (resp., $G_t(X,Y,Z)$) be the homogenization of the polynomial $f_t(x,y)$ (resp., $g_t(x,y)$) and denote by 
 $\chi_t$ the projective curve over $\overline{\mathbb{F}}_p$ defined by the equation $G_t(X,Y,Z)=0$. Notice that $g_t(x,y)$ has an absolutely irreducible factor over $\mathbb{F}_p$ if and only if $G_t(X,Y,Z)$ does so. For any subfield 
 $K\subseteq \overline{\mathbb{F}}_p$, $\chi_t(K)$ will denote the set of $K$-rational points of $\chi_t$. 
 
By Lemma \ref{le: Mult2casoA} one has that, in case (A), the singularities of $\chi_t$ are exactly those singular points of the curve $F_t(X,Y,Z)=0$ which do not belong to the line $X=Y$; moreover all of them are in the chart $Z\not=0$. As a consequence
 the singular locus of $\chi_t$ is defined (in the affine coordinates $x,y$) by the condition $x\neq y$ and the equations
\begin{equation}\label{eq:SINx}
(x+1)^{t-1}-x^{t-1}=0,
\end{equation}
\begin{equation}\label{eq:SINy}
(y+1)^{t-1}-y^{t-1}=0\;\;\;\mbox{and}
\end{equation}
\begin{equation}\label{eq:SINxy}
x^{t-1}-y^{t-1}=0
\end{equation}
Taking into account that all the coordinates of a singular point must be non-zero, set
$$r:=\frac{x+1}{x},\;\;\; s:=\frac{y+1}{y}.$$
Conditions (\ref{eq:SINx}) and (\ref{eq:SINy}) mean that 
\begin{equation}\label{ee4}
r^{t-1}=1,\;\;\;\; s^{t-1}=1.
\end{equation}
Condition (\ref{eq:SINxy}) means that
\begin{equation}\label{ee5}
(r-1)^{t-1}=(s-1)^{t-1}.
\end{equation}
From these considerations it is straightforward that the map defined by $$(r,s)\mapsto \left(\frac{1}{r-1}:\frac{1}{s-1}:1\right)$$ provides a bijection between the set $$\Omega_t:=\left\{(r,s)\in \overline{\mathbb{F}}_p^2 \mid  r^{t-1}=s^{t-1}=1, (r-1)^{t-1}=(s-1)^{t-1}, r\not=1, s\not=1, r\not=s \right\}$$ and the set of singular points of $\chi_t$. Moreover, by Lemma \ref{le:Proct2LinesCaseA}, all these singularities are nodal.
The following lemma will be a key tool for our results.

\begin{lemma}\label{le:exit2rationalPoints}
Let $K$ be a finite subfield of $\overline{\mathbb{F}}_p$. If there exists $P\in \chi_t(K)$  such that $P$ is not a singular point of $\chi_t$ then there exists an absolutely irreducible homogeneous polynomial $H(X,Y,Z)\in K[X,Y,Z]$ such that $H(X,Y,Z)$ divides $G_t(X,Y,Z)$.
\end{lemma}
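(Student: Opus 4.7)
\medskip

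\noindent\textbf{Proof proposal.} The plan is to factor $G_t$ over $\overline{\mathbb{F}}_p$ into absolutely irreducible factors and use a Galois descent argument, exploiting the hypothesis that $P$ is smooth to ensure that exactly one factor passes through $P$.

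First I would write
$$G_t(X,Y,Z) = H_1(X,Y,Z) \cdots H_m(X,Y,Z)$$
as a product of absolutely irreducible homogeneous polynomials in $\overline{\mathbb{F}}_p[X,Y,Z]$. Since $G_t$ has coefficients in $\mathbb{F}_p \subseteq K$, the Galois group $\Gamma := \mathrm{Gal}(\overline{\mathbb{F}}_p / K)$ permutes the factors $H_i$ up to scalar.

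The key observation is that $P$ lies on exactly one of the factors. Indeed, by the additivity of multiplicity under products, $m_P(G_t) = \sum_{i=1}^m m_P(H_i)$. If $P$ belonged to two distinct factors, say $H_i$ and $H_j$, then $m_P(G_t) \geq 2$, making $P$ a singular point of $\chi_t$, contrary to hypothesis. Hence there is a unique index, say $i=1$, with $P$ on $V(H_1)$, and $m_P(H_1) = 1$.

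Now for every $\sigma \in \Gamma$, since $P$ is $K$-rational we have $\sigma(P) = P$. The polynomial $\sigma(H_1)$ is again an absolutely irreducible factor of $G_t$ (because $G_t$ has coefficients in $K$), and it vanishes at $\sigma(P) = P$. By the uniqueness established above, $\sigma(H_1)$ must equal $H_1$ up to a scalar: $\sigma(H_1) = \lambda_\sigma H_1$ with $\lambda_\sigma \in \overline{\mathbb{F}}_p^*$. To conclude I would rescale $H_1$ so that it has coefficients in $K$. The cleanest way is the following elementary normalization: fix a lexicographic order on monomials and divide $H_1$ by the coefficient of its leading monomial, so that this coefficient becomes $1$. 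This normalization is invariant under $\Gamma$, so in $\sigma(H_1) = \lambda_\sigma H_1$ both sides have leading coefficient $1$, forcing $\lambda_\sigma = 1$. Thus $H_1$ is fixed by all of $\Gamma$, hence $H_1 \in K[X,Y,Z]$. (Equivalently, one could invoke Hilbert's Theorem 90 to kill the cocycle $\sigma \mapsto \lambda_\sigma$.)

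Setting $H := H_1$ gives an absolutely irreducible polynomial in $K[X,Y,Z]$ dividing $G_t$, as required. The only delicate point is justifying that a smooth point of $\chi_t$ lies on a unique irreducible component, but this is immediate from the additivity of multiplicities; the Galois descent step is routine once uniqueness is in hand.
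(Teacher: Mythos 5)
Your proof is correct, and it rests on exactly the same key observation as the paper's: by additivity of $m_P$ over products, a non-singular point of $\chi_t$ can lie on only one absolutely irreducible component, and a $K$-rational such point is then forced to pin down a Galois-stable factor. The execution differs in the bookkeeping. The paper first factors $G_t$ into \emph{$K$-irreducible} polynomials, isolates the factor $R_1$ vanishing at $P$, and invokes Lemma \ref{IfIrreducibleEqualDegreeFactors} to write $R_1=c\prod_{\sigma\in \mathrm{Gal}(L/K)}\sigma(H)$ with $H$ absolutely irreducible over some finite extension $L$ of $K$; since $P$ is $K$-rational, every conjugate $\sigma(H)$ vanishes at $P$, so non-singularity forces $L=K$ and $R_1$ itself is absolutely irreducible. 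You instead factor absolutely over $\overline{\mathbb{F}}_p$ from the start and carry out the Galois descent by hand, observing $\sigma(H_1)=\lambda_\sigma H_1$ and killing the scalar cocycle by normalizing a leading coefficient (all coefficients lie in a finite extension of $K$, so the fixed-field argument closes correctly). Both routes are sound; the paper's reuses an already-proved structure lemma and never has to mention the cocycle, while yours is self-contained and makes the descent step explicit.
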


\begin{proof}
Let $G_t(X,Y,Z)=R_1(X,Y,Z)\cdots R_m(X,Y,Z)$ be the decomposition of 
$G_t(X,Y,Z)$ as a product of irreducible homogeneous polynomials $R_i\in K[X,Y,Z]$, $1\leq i\leq m$. Assume, without loss of generality, that $R_1(P)=0$. By Lemma \ref{IfIrreducibleEqualDegreeFactors} there exists a finite extension $L$ of $K$ and an absolutely irreducible homogeneous polynomial $H(X,Y,Z)\in L[X,Y,Z]$ such that $$R_1(X,Y,Z)=c\prod_{\sigma\in G} \sigma(H(X,Y,Z)),$$
where $c\in L$ and $G=Gal(L/K)$. The point $P$ must be a zero of any of the above factors $\sigma(H(X,Y,Z))$. But, since $P$ is a non-singular point of $\chi_t$, it holds that $L=K$ and, therefore, $R_1(X,Y,Z)$ is absolutely irreducible.
\end{proof}

\begin{theo}\label{gordo1}
If either $\gcd(p-1,t)\geq 3$ or $\gcd(p-1,t-1)\geq 2$ then $G_t(X,Y,Z)$ has an absolutely irreducible factor over $\mathbb{F}_p$.
\end{theo}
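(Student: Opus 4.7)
The plan is to invoke Lemma~\ref{le:exit2rationalPoints}: it suffices to exhibit a single $\mathbb{F}_p$-rational point of $\chi_t$ that is not a singular point. I will handle the two disjunctive hypotheses separately, producing an affine point in the first case and a point at infinity in the second.

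Suppose first that $\gcd(p-1,t)\geq 3$. Since $\mathbb{F}_p^*$ is cyclic, the equation $r^t=1$ has exactly $\gcd(p-1,t)\geq 3$ solutions in $\mathbb{F}_p^*$, so I can pick two distinct elements $r_1,r_2\in\mathbb{F}_p^*\setminus\{1\}$ with $r_i^t=1$. Setting $x_i:=1/(r_i-1)\in\mathbb{F}_p^*$, the identity $(x_i+1)/x_i=r_i$ gives $(x_i+1)^t=r_i^t\, x_i^t=x_i^t$, hence $f_t(x_1,x_2)=0$. Because $x_1\neq x_2$, the point $P=(x_1,x_2)$ lies in $\chi_t(\mathbb{F}_p)$. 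To see that $P$ is not in the singular locus $\Omega_t$ described immediately before the statement of the theorem, I observe that the first defining condition of $\Omega_t$ requires $r_i^{t-1}=1$; but $r_i^t=1$ and $r_i\neq 1$ force $r_i^{t-1}=r_i^{-1}\neq 1$, so the condition fails.

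Suppose now that $\gcd(p-1,t-1)\geq 2$. I pick $\zeta\in\mathbb{F}_p^*\setminus\{1\}$ with $\zeta^{t-1}=1$ and consider the point at infinity $P=(1:\zeta:0)$. From the leading form $F_t(X,Y,0)=t(X^{t-1}-Y^{t-1})$ together with the factorisation $F_t=(X-Y)\,G_t$ one obtains $G_t(X,Y,0)=t\,(X^{t-1}-Y^{t-1})/(X-Y)$, which vanishes at $(1,\zeta,0)$; so $P\in\chi_t(\mathbb{F}_p)$. Since $1-\zeta\neq 0$, the curves $G_t$ and $F_t$ have the same singularity status at $P$, and Lemma~\ref{le:NoSingularCaseA} asserts that there are no singular points of $F_t$ at infinity in case (A). Hence $P$ is non-singular and Lemma~\ref{le:exit2rationalPoints} closes the argument.

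The main conceptual step is the reduction via Lemma~\ref{le:exit2rationalPoints}; no Bezout estimate, pencil/Bertini argument, or delicate counting is required, because the two hypotheses are designed precisely so that sufficiently many $\mathbb{F}_p$-rational $t$-th or $(t-1)$-th roots of unity exist to produce one explicit smooth rational point. The only thing one has to be careful about is verifying that the constructed point is not accidentally singular, which in both cases reduces to the observation that $r^t=1,r\neq 1$ prevents $r^{t-1}=1$ (Case 1) and that singular points at infinity are entirely ruled out in case (A) (Case 2).
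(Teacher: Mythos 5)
Your proposal is correct and follows essentially the same route as the paper: both cases reduce to Lemma~\ref{le:exit2rationalPoints} by exhibiting the same explicit non-singular $\mathbb{F}_p$-rational point, namely $(1/(r_1-1):1/(r_2-1):1)$ built from two $t$-th roots of unity in the first case and a point at infinity $(r:1:0)$ with $r^{t-1}=1$, $r\neq 1$ in the second. You merely make explicit the verifications (membership in $\chi_t$ and non-singularity via the description of $\Omega_t$ and the absence of singular points at infinity in case (A)) that the paper leaves as references to the preceding discussion.
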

\begin{proof}
Assume first that $\gcd(p-1,t)\geq 3$. Then the equation $z^t-1=0$ has, at least, 3 solutions in $\mathbb{F}_p$. This implies the existence of $r_1,r_2\in \mathbb{F}_p\setminus\{1\}$ such that $r_1\not= r_2$ and $r_1^t=r_2^t=1$. It is straightforward that  $P=(1/(r_1-1):1/(r_2-1):1)$ is an $\mathbb{F}_p$-rational point of the curve $\chi_t$. Moreover it is clearly non-singular (see the paragraph before Lemma \ref{le:exit2rationalPoints}). Therefore $G_t(X,Y,Z)$ has an absolutely irreducible factor over $\mathbb{F}_p$ by Lemma \ref{le:exit2rationalPoints}.

Assume now that $\gcd(p-1,t-1)\geq 2$. Then the equation $z^{t-1}-1=0$ has, at least, $2$ solutions in $\mathbb{F}_p$, say $1$ and $r\not=1$. It is straightforward that the $\mathbb{F}_p$-rational point $(r:1:0)$ belongs to $\chi_t$; moreover it is non-singular because it is on the line at infinity $Z=0$. Applying again Lemma \ref{le:exit2rationalPoints} one has that $G_t(X,Y,Z)$ has an absolutely irreducible factor over $\mathbb{F}_p$.
\end{proof}


A natural generalization is given in the following result:

\begin{theo}
If there exist $m\in\mathbb{N}$ such that $G_t(X,Y,Z)$ does not factor over $\mathbb{F}_{p^m}$ and furthermore either $gcd(p^m-1,t)\geq 3$ or $gcd(p^m-1,t-1)\geq 2$ then $G_t(X,Y,Z)$ has an absolutely irreducible factor over $\mathbb{F}_p$.
\end{theo}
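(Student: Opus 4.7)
The approach is straightforward: reduce this theorem to the already-proved Theorem \ref{gordo1} by working over $\mathbb{F}_{p^m}$ instead of $\mathbb{F}_p$, and then exploit the hypothesis that $G_t$ does not factor over $\mathbb{F}_{p^m}$ in order to descend the absolutely irreducible factor back to $\mathbb{F}_p$.

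First I would observe that the entire argument of Theorem \ref{gordo1} goes through verbatim with $\mathbb{F}_p$ replaced by $\mathbb{F}_{p^m}$. Indeed, the hypothesis $\gcd(p^m-1,t)\geq 3$ produces two distinct elements $r_1,r_2\in \mathbb{F}_{p^m}\setminus\{1\}$ with $r_1^t=r_2^t=1$, and hence an $\mathbb{F}_{p^m}$-rational point $(1/(r_1-1):1/(r_2-1):1)$ on $\chi_t$; similarly, $\gcd(p^m-1,t-1)\geq 2$ yields an $\mathbb{F}_{p^m}$-rational point at infinity of the form $(r:1:0)$ with $r\neq 1$ and $r^{t-1}=1$. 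In both cases the point produced is non-singular on $\chi_t$ (it either lies on the line at infinity, or by the description of the singular locus just before Lemma \ref{le:exit2rationalPoints} is manifestly excluded from the singular set).

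Next, I would apply Lemma \ref{le:exit2rationalPoints} with $K=\mathbb{F}_{p^m}$ to this non-singular $\mathbb{F}_{p^m}$-rational point. The lemma yields an absolutely irreducible homogeneous polynomial $H(X,Y,Z)\in \mathbb{F}_{p^m}[X,Y,Z]$ that divides $G_t(X,Y,Z)$.

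Finally, I would invoke the standing hypothesis that $G_t(X,Y,Z)$ does not factor over $\mathbb{F}_{p^m}$, i.e., $G_t$ is irreducible in $\mathbb{F}_{p^m}[X,Y,Z]$. Combined with the previous step, the only possibility is that $H$ is a scalar multiple of $G_t$. Hence $G_t$ itself is absolutely irreducible. Since $G_t\in \mathbb{F}_p[X,Y,Z]$, it is in particular an absolutely irreducible factor of itself defined over $\mathbb{F}_p$, which is what we wanted. There is essentially no obstacle in this proof; the one point requiring a line of explanation is why the rational point produced by the gcd hypothesis is non-singular, but this is immediate from the explicit description of $\mathrm{Sing}(\chi_t)$ given in terms of $\Omega_t$ (affine singular points have $r\neq s$ and $r,s\neq 1$, hence no coordinate is zero nor infinite).
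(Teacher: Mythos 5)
Your proof is correct and is essentially the argument the paper intends: the paper states this result without proof, calling it a ``natural generalization'' of Theorem \ref{gordo1}, and your reconstruction (run the Theorem \ref{gordo1} argument over $\mathbb{F}_{p^m}$ to produce a non-singular $\mathbb{F}_{p^m}$-rational point, apply Lemma \ref{le:exit2rationalPoints} with $K=\mathbb{F}_{p^m}$, then use the hypothesis that $G_t$ does not factor over $\mathbb{F}_{p^m}$ to conclude the absolutely irreducible divisor is $G_t$ itself, which is defined over $\mathbb{F}_p$) is exactly the intended route. The only point worth a remark is the reading of ``does not factor over $\mathbb{F}_{p^m}$'' as irreducibility over $\mathbb{F}_{p^m}$; under the alternative reading that the $\mathbb{F}_{p^m}$-factorization coincides with the $\mathbb{F}_p$-factorization, your argument still goes through with the obvious one-line adjustment.
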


The next result shows an upper bound for the number of singular points of $\chi_t$:
\begin{prop}\label{joselito}
If $t$ is even then the number of singular points of $\chi_t$ is, at most, $\frac{(t-2)(t-4)}{2}$.
\end{prop}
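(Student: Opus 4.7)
My plan is to use the bijection established above between the singular points of $\chi_t$ and the set $\Omega_t$, and to exploit an involution on $\Omega_t$ whose existence is forced precisely by $t$ being even. Setting $U := \{r \in \overline{\mathbb{F}}_p^* \mid r^{t-1}=1,\ r \neq 1\}$, a set of cardinality $t-2$ (recall $t-1$ is coprime to $p$ in Case (A)), I introduce the equivalence relation $r \sim s \iff (r-1)^{t-1} = (s-1)^{t-1}$ on $U$. If the equivalence classes have sizes $a_1, \ldots, a_k$, then $\sum_i a_i = t-2$ and the number of singular points of $\chi_t$ equals $|\Omega_t| = \sum_i a_i(a_i - 1)$.

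Next I will use the involution $\iota(r) := 1/r$ of $U$. A direct computation gives
\[
\left(\frac{1}{r}-1\right)^{t-1} = \frac{(1-r)^{t-1}}{r^{t-1}} = (-1)^{t-1}(r-1)^{t-1} = -(r-1)^{t-1},
\]
using $r^{t-1}=1$ and $(-1)^{t-1}=-1$ (which is exactly where the evenness of $t$ enters). Hence $\iota$ respects $\sim$ and therefore permutes the equivalence classes; moreover, no class is fixed, because $r \sim 1/r$ would force $(r-1)^{t-1} = -(r-1)^{t-1}$, and since $p$ is odd this gives $r=1$, a contradiction. Thus the classes group into $m$ pairs of equal size; letting $b_1, \ldots, b_m$ denote the sizes of one class from each pair, one has $2\sum_i b_i = t-2$.

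The bound then follows by an elementary estimate:
\[
|\Omega_t| = \sum_i a_i(a_i-1) = 2 \sum_{i=1}^m b_i(b_i - 1) \leq 2\Bigl[\Bigl(\sum_i b_i\Bigr)^2 - \sum_i b_i\Bigr] = \frac{(t-2)(t-4)}{2},
\]
using $\sum b_i^2 \leq (\sum b_i)^2$ for nonnegative reals. The main subtle point is the non-existence of fixed classes under $\iota$: this is exactly what forces the otherwise trivial bound $(t-2)(t-3)$ (coming from $\sum a_i(a_i-1) \leq (\sum a_i)(\sum a_i - 1)$) down to $(t-2)(t-4)/2$, and it genuinely needs $t$ to be even, since for $t$ odd one has $(-1)^{t-1}=1$ and the argument collapses.
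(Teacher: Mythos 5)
Your proof is correct. The key computation is exactly the one the paper uses: for $s\in\mu(t-1)\setminus\{1\}$ one has $(1/s-1)^{t-1}=-(s-1)^{t-1}$, and since $t$ is even and $p$ is odd this sign flip is nontrivial, which is what pushes the trivial bound $(t-2)(t-3)$ down to $\frac{(t-2)(t-4)}{2}$. Where you differ is in the bookkeeping. The paper works with $\Omega'_t$ (the set $\Omega_t$ with the diagonal added back in), fixes the first coordinate $r$, and observes that at most one element of each pair $\{s,1/s\}$ can occur as a second coordinate, giving $\#\Omega'_t\le\frac{(t-2)^2}{2}$; it then subtracts the $t-2$ diagonal points. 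You instead partition $\mu(t-1)\setminus\{1\}$ into equivalence classes under $r\sim s\iff(r-1)^{t-1}=(s-1)^{t-1}$, show the involution $r\mapsto 1/r$ pairs these classes into couples of equal size with no fixed class, and conclude via $\sum b_i^2\le\bigl(\sum b_i\bigr)^2$. The two bounds are tight in the same extremal configuration (exactly two classes of size $\frac{t-2}{2}$ swapped by the involution, which is precisely the situation $t=\frac{p^\ell+1}{2}$ discussed in the paper's subsequent remark). Your class-based organization has the advantage of anticipating the machinery the paper only introduces later: the relation you define is the paper's ${\mathcal R}_t$, and your identity $\#\Omega_t=\sum_i a_i(a_i-1)$ is the counting formula underlying Lemma \ref{torero}. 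One small point worth making explicit: the claim $\#U=t-2$ uses that $p\nmid t-1$, which holds because the proposition sits inside Case (A).
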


\begin{proof}
Let us define $$\Omega'_t:=\left\{(r,s)\in \overline{\mathbb{F}}_p^2 \mid  r^{t-1}=s^{t-1}=1, r\not=1, s\not=1, (r-1)^{t-1}=(s-1)^{t-1} \right\}$$ and $\Delta:=\{(r,r)\in \overline{\mathbb{F}}_p^2 \mid  r^{t-1}=1, r\not=1\}$. Observe that $\Delta\subseteq \Omega'_t$ and $\Omega_t=\Omega'_t\setminus \Delta$.

From the sequence of equalities $(1/s-1)^{t-1}=-(s-1)^{t-1}/s^{t-1}=-(s-1)^{t-1}=-(r-1)^{t-1}$, it follows the implication
$$(r,s)\in \Omega'_t\Rightarrow (r,1/s)\not\in \Omega'_t.$$
Then $\#\Omega'_t\leq \frac{t-2}{2}(t-2)$ and therefore
$$\# \Omega_t=\#\Omega'_t-\#\Delta\leq \frac{(t-2)^2}{2}-(t-2)=\frac{(t-2)(t-4)}{2}.$$
The result holds because there is a bijection between $\Omega_t$ and the set of singular points of $\chi_t$.

\end{proof}

\begin{theo}\label{te:MainTheoGeneralCase}
If the number of singular points of $\chi_t$ is less than $\frac{(t-2)^2}{4}$ then $g_t(x,y)$ has an absolutely irreducible factor over $\mathbb{F}_p$.
\end{theo}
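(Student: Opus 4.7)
The plan is to proceed by contradiction, applying the generalized Bezout theorem across the full absolutely irreducible factorization of $G_t$ and exploiting the rigid nodal structure of singularities in Case (A).

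Suppose $G_t(X,Y,Z)$ admits no absolutely irreducible factor defined over $\mathbb{F}_p$, and write $G_t = u_1 u_2 \cdots u_k$ as a product of absolutely irreducible factors over $\overline{\mathbb{F}}_p$. By Lemma \ref{IfIrreducibleEqualDegreeFactors}, the Galois group of $\overline{\mathbb{F}}_p/\mathbb{F}_p$ permutes the $u_i$ in orbits $O_1,\ldots,O_r$ in which all factors share the same degree $e_j$. The standing hypothesis forces every orbit size $n_j := |O_j|$ to satisfy $n_j \geq 2$, since an orbit of size one would be a Galois-invariant, hence $\mathbb{F}_p$-defined, absolutely irreducible factor. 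Setting $d_i := \deg(u_i)$, this yields
\[
\sum_i d_i \;=\; \sum_j n_j e_j \;=\; t-1, \qquad n_j \geq 2 \text{ for all } j.
\]

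Next, I would apply the generalized form of Bezout's theorem (equation \eqref{bezoutseveral}) to the factors $u_1,\ldots,u_k$, giving
\[
\sum_{P \in \mathbb{P}^2} \sum_{1 \leq i < j \leq k} I(P,u_i,u_j) \;=\; \sum_{1 \leq i < j \leq k} d_i d_j \;=\; \frac{(t-1)^2 - \sum_i d_i^2}{2}.
\]
By Lemma \ref{le: Mult2casoA} and Lemma \ref{le:Proct2LinesCaseA}, every singular point $P$ of $\chi_t$ is an ordinary node: $m_P(g_t) = 2$ and its tangent cone factors as a product of two \emph{distinct} linear forms. Since $\sum_i m_P(u_i) = m_P(g_t) = 2$, only two configurations are possible at such a $P$. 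Either a single $u_i$ has $m_P(u_i)=2$ (a self-node carrying both tangent lines) while every other $u_j$ misses $P$, contributing $0$ to $\sum_{i<j} I(P,u_i,u_j)$; or exactly two factors $u_a,u_b$ satisfy $m_P(u_a)=m_P(u_b)=1$ with distinct tangent lines, in which case Corollary \ref{DifferentTangentCone} forces $I(P,u_a,u_b) = m_P(u_a)\,m_P(u_b) = 1$. Points smooth on $g_t$ lie on at most one $u_i$ and contribute $0$. Consequently, the double sum is bounded above by $\#\mathrm{Sing}(\chi_t)$.

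Finally, I would bound $\sum_i d_i^2 = \sum_j n_j e_j^2 \leq (\max_j e_j) \sum_j n_j e_j = (\max_j e_j)(t-1)$. The constraint $n_j \geq 2$ gives $e_j \leq (t-1)/n_j \leq (t-1)/2$, hence $\sum_i d_i^2 \leq (t-1)^2/2$. Substituting yields
\[
\#\mathrm{Sing}(\chi_t) \;\geq\; \frac{(t-1)^2 - (t-1)^2/2}{2} \;=\; \frac{(t-1)^2}{4} \;>\; \frac{(t-2)^2}{4},
\]
contradicting the hypothesis. The main obstacle is the per-point analysis: confirming that at each singularity at most one pair $(u_i,u_j)$ contributes to the Bezout sum and that its contribution is exactly $1$. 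This is where Case (A) is essential, since Lemmas \ref{le: Mult2casoA}--\ref{le:Proct2LinesCaseA} rule out higher multiplicities or repeated tangent directions; everything else reduces to bookkeeping with the orbit sizes and the AM-GM-style bound on $\sum d_i^2$.
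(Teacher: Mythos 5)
Your proof is correct and follows essentially the same route as the paper: decompose $g_t$ into absolutely irreducible factors grouped into Galois orbits of size $n_j\geq 2$, apply Bezout to all pairs, bound each nodal singularity's contribution to the intersection sum by $1$, and use the orbit-size constraint to force $\sum_{i<j} d_i d_j \geq \deg(g_t)^2/4$. One small correction: $\deg(g_t)=t-2$ rather than $t-1$ (the $\deg f_t = t-1$ and division by $x-y$ drops it by one), so your final lower bound on the number of singular points is $\frac{(t-2)^2}{4}$ rather than $\frac{(t-1)^2}{4}$ --- which still contradicts the strict hypothesis $N_t<\frac{(t-2)^2}{4}$, so the argument is unaffected.
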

\begin{proof}
As in the proof of Theorem \ref{MainTheo}, let $g_t(x,y)=\prod_{j=1}^r f_j$ be the decomposition of $g_t(x,y)$ as a product of irreducible polynomials in $\mathbb{F}_p[x,y]$. Let also $f_j=\prod_{i=1}^{n_j}f_{j,i}$ be the decomposition of $f_j$ as a product of absolutely irreducible polynomials, $1\leq j\leq r$. Applying Bezout's Theorem we have that
$$\sum_{(j_1,k_1)\not=(j_2,k_2)} \sum_{P} I(P,f_{j_1,k_1},f_{j_2,k_2})=\sum_{(j_1,k_1)\not=(j_2,k_2)}  \deg(f_{j_1,k_1})\deg(f_{j_2,k_2}),$$
where $P$ varies over the set of singular points of the affine curve $g_t(x,y)=0$ (notice these are all the singular points of $\chi_t$). Since all the singularities are nodal, each intersection number $I(P,f_{j_1,k_1},f_{j_2,k_2})$ is equal to 1 (resp., $0$) if $f_{j_1,k_1}(P)=f_{j_2,k_2}(P)=0$ (resp., otherwise). Therefore we have the inequality
$$N_t\geq \sum_{(j_1,k_1)\not=(j_2,k_2)}  \deg(f_{j_1,k_1})\deg(f_{j_2,k_2}),$$
where $N_t$ denotes the number of singular points of $\chi_t$. Taking into account that, for each $j=1,\ldots,r$, all the absolutely irreducible components of $f_j$ have the same degree, we know from Lemma \ref{le:BoundDegree} that the right hand side of the above inequality is

\[
\sum_{j=1}^r \sum_{1\leq i<s\leq n_j} \deg(f_{j,i})\deg(f_{j,s})+
\sum_{1\leq j<l\leq r} \sum_{\substack{1\leq i\leq n_j\\1\leq s\leq n_l}}\deg(f_{j,i})\deg(f_{l,s})\]\[=\frac{1}{2}\biggl(\deg(g_t)^2-\sum_{j=1}^r  \frac{\deg(f_j)^2}{n_j} \biggr)>\deg(g_t)^2/4=\frac{(t-2)^2}{4}\]
and this is a contradiction with our assumption $N_t< (t-2)^2/4$.

\end{proof}

Denote by $\mu(t-1)$ the set of $(t-1)$-roots of unity in $\overline{\mathbb{F}}_p$. We define the following equivalence relation in $\mu(t-1)\setminus \{1\}$: $$r\; {\mathcal R}_t \;s \mbox{ if }  (r-1)^{t-1}=(s-1)^{t-1}.$$

\begin{lemma}\label{torero}
Let $\{ R_1,\ldots,R_{\ell}\}$ be the quotient set of ${\mathcal R}_t$. Assume that $\ell\geq 4$ and, for each $j_1\in \{1,2,\ldots,\ell\}$, there exists a subset $\{j_1,j_2,j_3,j_4\}\subseteq \{1,2,\ldots,\ell\}$ of cardinality $4$ such that the equivalence classes $R_{j_1},R_{j_2}, R_{j_3}$ and $R_{j_4}$ have the same cardinality. Then $g_t(x,y)$ has an absolutely irreducible factor over $\mathbb{F}_p$.
\end{lemma}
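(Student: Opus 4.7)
The plan is to reduce the statement to Theorem~\ref{te:MainTheoGeneralCase}, which gives an absolutely irreducible factor of $g_t(x,y)$ over $\mathbb{F}_p$ as soon as the number $N_t$ of singular points of $\chi_t$ satisfies $N_t<(t-2)^2/4$. I would therefore aim to establish this bound on $N_t$ using the hypothesis alone; the whole argument is purely combinatorial.

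First I would use the bijection $(r,s)\mapsto(1/(r-1):1/(s-1):1)$ between $\Omega_t$ and the set of singular points of $\chi_t$, established in the paragraph before Lemma~\ref{le:exit2rationalPoints}, to express $N_t$ in terms of the sizes of the $\mathcal{R}_t$-classes. Setting $c_j:=|R_j|$, a pair $(r,s)$ lies in $\Omega_t$ exactly when $r$ and $s$ are distinct elements of a common class $R_j$, so
\[
N_t \;=\; \sum_{j=1}^{\ell} c_j(c_j-1).
\]
Since we are in Case $(A)$ we have $p\nmid t-1$, so $\mu(t-1)\setminus\{1\}$ is partitioned by the $R_j$ and has cardinality $t-2$; hence $\sum_j c_j=t-2$ and $N_t=\sum_j c_j^2-(t-2)$.

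The key step is bounding $\sum_j c_j^2$. Let $D:=\max_j c_j$. The assumption that every class $R_{j_1}$ belongs to a $4$-subset of classes of equal cardinality implies, in particular, that at least four classes have cardinality $D$; combined with $c_j\ge 1$ for the remaining classes, this gives $4D\le\sum_j c_j=t-2$, i.e.\ $D\le(t-2)/4$. Therefore
\[
\sum_{j=1}^{\ell} c_j^2 \;\le\; D\sum_{j=1}^{\ell} c_j \;=\; D(t-2) \;\le\; \frac{(t-2)^2}{4},
\]
and consequently
\[
N_t \;\le\; \frac{(t-2)^2}{4}-(t-2) \;<\; \frac{(t-2)^2}{4}.
\]
Invoking Theorem~\ref{te:MainTheoGeneralCase} then completes the argument.

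The proof is quite direct and I do not anticipate a serious obstacle. The only point requiring any verification is the elementary inequality $D\le(t-2)/4$, which follows immediately from the hypothesis that the maximum class cardinality is shared by at least four classes; everything else is bookkeeping on the class sizes and an appeal to the previously established Theorem~\ref{te:MainTheoGeneralCase}.
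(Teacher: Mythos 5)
Your proposal is correct and follows essentially the same route as the paper: both express $N_t=\#\Omega_t=\sum_j(\#R_j)^2-(t-2)$, use the hypothesis to bound $\sum_j(\#R_j)^2$ by $(t-2)^2/4$, and conclude via Theorem~\ref{te:MainTheoGeneralCase}. The only (harmless) difference is in the elementary optimization step: the paper groups classes by common cardinality and argues by convexity that the maximum occurs with a single group of coefficient $4$, whereas you bound $\sum_j c_j^2\le D\sum_j c_j$ with $D\le(t-2)/4$, which uses only the quadruple condition for the class of maximal size and is, if anything, slightly cleaner.
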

\begin{proof}
For each $m\in \{\#R_1,\#R_2,\ldots,\# R_{\ell} \}$ let $A_m$ be the union of all those classes $R_i$ with cardinality $m$. Consider the partition of $\mu(t-1)\setminus \{1\}$ given by ${\mathcal A}:=\{A_m\mid \#R_i=m \mbox{ for some } i\}$. Notice that, by hypothesis, each $A\in {\mathcal A}$ is the union of, at least, 4 equivalence classes $R_i$.  Then the
number of singular points of the curve $\chi_t$ admits the following expression:
$$\# \Omega_t=\sum_{A\in {\mathcal A}} c(A) n(A)^2 -(t-2),$$
where $c(A)\geq 4$ denotes the number of equivalence classes $R_i$ whose union is $A$ and $n(A)$ denotes the cardinality of any of these classes. Notice that $\sum_{A\in{\mathcal A}} c(A)=\ell$ and $\sum_{A\in {\mathcal A}} c(A) n(A)=t-2$. Using repeatedly the inequality $(x+y)^2\geq x^2+y^2$ for $x,y\in \mathbb{R}$, it is not hard to prove that the sum $\sum_{A\in {\mathcal A}} c(A) n(A)^2$ achieves (for fixed $t$) the maximum value when there is only one summand and with coefficient 4. Therefore:
$$\#\Omega_t\leq 4\left(\frac{t-2}{4}\right)^2-(t-2)<\frac{(t-2)^2}{4}.$$
Now the result follows from Theorem \ref{te:MainTheoGeneralCase}.

\end{proof}

\begin{remark}
Assume that $t=\frac{p^\ell+1}{2}$ for some $\ell\geq 1$ and that $t$ is even. Then $2(t-1)=p^\ell-1$ and, therefore, the set of $2(t-1)$-roots of unity of $\overline{\mathbb{F}}_p$ is the multiplicative group $\mathbb{F}^*_{p^\ell}$. Hence $(r-1)^{2(t-1)}=1$ for any $r\in \mu(t-1)\setminus \{1\}$ because $r-1\in \mathbb{F}^*_{p^\ell}$. Since $(r-1)^{t-1}=-(1/r-1)^{t-1}$ this means that $(r-1)^{t-1}=1$ (resp., $(r-1)^{t-1}=-1$) for $\frac{t-2}{2}$ elements $r$ of $\mu(t-1)\setminus \{1\}$. So, the relation ${\mathcal R}_t$ has exactly two equivalence classes; then Lemma \ref{torero} cannot be applied to this case. Further, the number of singular points of  $\chi_t$ is
$$\#\Omega_t=2\left(\frac{t-2}{2}\right)^2-(t-2)=\frac{(t-2)(t-4)}{2},$$
which is the maximum number of possible singular points by Proposition \ref{joselito}. Then the existence of an absolutely irreducible (over $\mathbb{F}_p$) factor of $g_t(x,y)$ cannot be proved, for these values of $t$, by bounding the number of singular points (that is, applying Theorem \ref{te:MainTheoGeneralCase}).

\end{remark}

\begin{theo}\label{banderillero}
If $t$ is even, $(s-1)^{t-1}\not\in \mathbb{F}_{p}$ and $(s-1)^{(t-1)(p-1)}\not=-1$ for all $s\in \mu(t-1)\setminus \{1\}$ then then $g_t(x,y)$ has an absolutely irreducible factor over $\mathbb{F}_p$.
\end{theo}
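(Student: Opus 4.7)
The plan is to reduce the claim to Lemma \ref{torero} by exhibiting, for every equivalence class of $\mathcal{R}_t$, three other classes of the same cardinality. I will parameterize the equivalence classes of $\mathcal{R}_t$ by the common value $a:=(s-1)^{t-1}$ that all representatives $s$ of a given class share, writing $c(a)$ for this class; the very definition of $\mathcal{R}_t$ makes this well posed, and no class has $a=0$ since $s\neq 1$.

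The key observation I will exploit is that two natural bijections of $\mu(t-1)\setminus\{1\}$ permute the classes in a controlled way. Since $t$ is even, $t-1$ is odd, and the involution $s\mapsto 1/s$ sends $(s-1)^{t-1}$ to $(-1)^{t-1}s^{-(t-1)}(s-1)^{t-1}=-a$ (using $s^{t-1}=1$), so it maps $c(a)$ bijectively onto $c(-a)$. The Frobenius $s\mapsto s^p$ sends $(s-1)^{t-1}=a$ to $((s-1)^p)^{t-1}=a^p$, so it maps $c(a)$ bijectively onto $c(a^p)$. Combining, the four classes $c(a)$, $c(-a)$, $c(a^p)$, $c(-a^p)$ all share a common cardinality.

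I will then check that, under the two hypotheses of the theorem, these four classes are pairwise distinct. Since $p$ is odd and $a\neq 0$, the inequalities $c(a)\neq c(-a)$ and $c(a^p)\neq c(-a^p)$ are automatic. The hypothesis $(s-1)^{t-1}\notin\mathbb{F}_p$, i.e.\ $a^p\neq a$, gives $c(a)\neq c(a^p)$ and $c(-a)\neq c(-a^p)$. The hypothesis $(s-1)^{(t-1)(p-1)}\neq -1$, i.e.\ $a^{p-1}\neq -1$, or equivalently $a^p\neq -a$, gives $c(a)\neq c(-a^p)$ and $c(-a)\neq c(a^p)$. Consequently, for every class $R_{j_1}$ in the quotient set of $\mathcal{R}_t$ there is a four-element subset of classes of equal cardinality containing it, and in particular $\ell\geq 4$.

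With those facts in hand, the hypotheses of Lemma \ref{torero} are met, and that lemma produces the desired absolutely irreducible factor of $g_t(x,y)$ over $\mathbb{F}_p$. The only step that requires care is the bookkeeping verifying that the four classes $c(a),c(-a),c(a^p),c(-a^p)$ are genuinely distinct, and this is precisely what dictates the two arithmetic conditions in the statement; everything else is a transparent transport of the involution and Frobenius symmetries through the parameter $a$.
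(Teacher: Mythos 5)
Your proposal is correct and follows essentially the same route as the paper: both produce, for each class, the four classes with parameters $a$, $-a$, $a^p$, $-a^p$ via the involution $s\mapsto 1/s$ and the Frobenius $s\mapsto s^p$, check they are pairwise distinct using exactly the two hypotheses, and then invoke Lemma \ref{torero}. Your bookkeeping through the parameter $a$ is a slightly cleaner way to see the equal cardinalities than the paper's chain-of-conjugates argument, but it is not a different proof.
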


\begin{proof}
Fix $r\in \mu(t-1)\setminus \{1\}$ and set $b:=(r-1)^{t-1}$. Denote by $R(r)$ the equivalence class of $r$ with respect to the relation $\mathcal R_t$ (analogously with $R(1/r)$, $R(r^p)$ and $R(1/r^p)$). These 4 classes are distinct because the elements $b$,  $(1/r-1)^{t-1}=-b$, $(r^p-1)^{t-1}=b^p$ and $(1/r^p-1)^{t-1}=-b^p$ are distinct due to the hypothesis in the statement. The map $\varphi_1: R(r)\rightarrow R(1/r)$ (resp., $\varphi_2: R(r)\rightarrow R(r^p)$) (resp., $\varphi_3: R(1/r)\rightarrow R(1/r^p)$) defined by $s\mapsto 1/s$ (resp., $s\mapsto s^p$) (resp., $s\mapsto s^p$) is  injective.  $\varphi_1$ is clearly surjective. Conjugation provides injective maps $R(r^{p^{e-1}})\rightarrow R(r^{p^{e}})$ for $e\geq 1$; taking $e$ as the cardinality of the conjugacy class of $r$ one has an injective map $R(r^{p^{e-1}})\rightarrow R(r)$, and this implies that the map $\varphi_2$ is a bijection. Analogously $\varphi_3$ is bijective.
 Therefore $R(r)$,  $R(1/r)$, $R(r^p)$ and  $R(1/r^p)$ are 4 distinct equivalence classes of the relation $\mathcal R_t$ with the same cardinality. Since all these facts are valid for an arbitrary element $r\in \mu(t-1)\setminus \{1\}$, applying Lemma \ref{torero} we have that $g_t(x,y)$ has an absolutely irreducible factor over $\mathbb{F}_p$.

\end{proof}

\begin{remark}\label{josetomas}
Notice that the following condition implies the hypothesis of Theorem \ref{banderillero}:
 $(s-1)^{t-1}\not\in \mathbb{F}_{p^2}$ for all $s\in \mu(t-1)\setminus \{1\}$. 
\end{remark}


\begin{coro}\label{newprop}
Assume that $t$ is even. Then $g_t(x,y)$ has an absolutely irreducible factor over $\mathbb{F}_p$ if the following equivalent conditions are satisfied:
\begin{itemize}
\item[(a)] $t-1$ divides $p^{2e}+1$ for some positive integer $e$.
\item[(b)] The order $u$ of $p$ in $\mathbb{Z}/(t-1)\mathbb{Z}$ is a multiple of 4 and $t-1$ divides $p^{u/2}+1$.
\end{itemize}

\end{coro}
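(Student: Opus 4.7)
The plan is to deduce the corollary from Theorem~\ref{banderillero}, invoked through the sufficient condition of Remark~\ref{josetomas}: it suffices to prove that, under hypothesis (a), $(s-1)^{t-1}\notin \mathbb{F}_{p^2}$ for every $s\in \mu(t-1)\setminus\{1\}$. The equivalence (a) $\Leftrightarrow$ (b) is then a separate arithmetic matter.

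For the main implication, I would fix such an $s$ and set $a:=(s-1)^{t-1}$. Because $t-1\mid p^{2e}+1$ and $s^{t-1}=1$, one has $s^{p^{2e}}=s^{-1}$. Applying the Frobenius in characteristic $p$ then yields
\[
a^{p^{2e}}=(s^{p^{2e}}-1)^{t-1}=\left(\frac{1-s}{s}\right)^{t-1}=(-1)^{t-1}(s-1)^{t-1}=-a,
\]
where the last equality uses $s^{t-1}=1$ and the fact that $t-1$ is odd (as $t$ is even). If $a$ lay in $\mathbb{F}_{p^2}$, then $a^{p^{2e}}=a$, forcing $2a=0$ and thus $a=0$ (since $p$ is odd), which contradicts $s\neq 1$. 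Remark~\ref{josetomas} and Theorem~\ref{banderillero} then give the desired absolutely irreducible factor.

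For the equivalence, the implication (b) $\Rightarrow$ (a) is immediate upon choosing $e=u/4$. For (a) $\Rightarrow$ (b), assume $t-1\mid p^{2e}+1$. Since $t\geq 4$ forces $t-1\geq 3$ and hence $-1\not\equiv 1\pmod{t-1}$, the order $u$ of $p$ satisfies $u\mid 4e$ but $u\nmid 2e$. Comparing $2$-adic valuations in these two divisibilities yields $4\mid u$ and $u/2\mid 2e$; writing $2e=(u/2)k$, the condition $u\nmid 2e$ forces $k$ to be odd. Setting $r:=p^{u/2}\bmod(t-1)$, from $r^2\equiv 1$ one gets $r^k\equiv r$, while on the other hand $r^k\equiv p^{2e}\equiv -1\pmod{t-1}$; hence $p^{u/2}\equiv -1\pmod{t-1}$, establishing (b).

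The main potential pitfall is the last step of the equivalence: it would be tempting, but in general invalid, to conclude from $(p^{u/2})^2\equiv 1\pmod{t-1}$ that $p^{u/2}\equiv \pm 1$, because $\mathbb{Z}/(t-1)\mathbb{Z}$ may carry many nontrivial square roots of unity. The point is to read the sign from the odd parity of $k$ in the factorisation $2e=(u/2)k$, which is the only place in the proof that requires slight care.
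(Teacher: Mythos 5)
Your proof is correct and follows essentially the same route as the paper: both reduce to Theorem \ref{banderillero} via Remark \ref{josetomas} by showing $(s-1)^{t-1}\notin\mathbb{F}_{p^2}$, using $s^{p^{2e}}=s^{-1}$ together with the sign flip $(s^{-1}-1)^{t-1}=-(s-1)^{t-1}$ (the paper phrases this as $1/r$ being an $\mathbb{F}_{p^2}$-conjugate root of $(x-1)^{t-1}-b$, you as $a^{p^{2e}}=-a$, which is the same computation). The arithmetic equivalence of (a) and (b), including the parity argument for $k$, also matches the paper's proof.
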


\begin{proof}
Let us prove first that (a) implies that $g_t(x,y)$ has an absolutely irreducible factor over $\mathbb{F}_p$. If $(s-1)^{t-1}\not\in \mathbb{F}_{p^2}$ for all $s\in \mu(t-1)\setminus \{1\}$ then, 
by Theorem \ref{banderillero} and Remark \ref{josetomas}, $g_t(x,y)$ has an absolutely irreducible factor over $\mathbb{F}_p$. Otherwise, there exists $r\in \mu(t-1)\setminus \{1\}$ such that $(r-1)^{t-1}=b\in \mathbb{F}_{p^2}$. Consider the polynomial $P(x)=(x-1)^{t-1}-b\in \mathbb{F}_{p^2}$. Notice that $r^{p^{2e}}=1/r$  because, by assumption, $t-1$ divides $p^{2e}+1$. So, $1/r$ is a conjugate of $r$ over $\mathbb{F}_{p^2}$ and, then, $1/r$ must be a root of $P(x)$. But this is a contradiction because $P(1/r)=-2b\not=0$.

Finally we will prove that (a) implies (b) (the converse implication is trivial). So, assume that $t-1$ divides $p^{2e}+1$ for some positive integer $e$. A clear consequence of this is that $4e$ is a multiple of $u$. 

We claim that $4e=uk$, where $k$ is odd (and, in particular, $4$ divides $u$). Indeed, reasoning by contradiction, if $k$ is even then $2e$ is a multiple of $u$ and, therefore, $p^{2e}\equiv 1\;({\rm mod}\; t-1)$, a contradiction. Then $4e=u(2m+1)$ for some $m\in \mathbb{N}$. Therefore
$$-1\equiv p^{2e}\equiv \frac{u}{2}(2m+1)\equiv p^{um} p^{u/2}\equiv p^{u/2}\;({\rm mod}\; t-1).$$

\end{proof}

\begin{coro}

If $t-1\geq 3$ is a prime number such that the multiplicative order of $p$ in  $\mathbb{Z}/(t-1)\mathbb{Z}$ is $(t-2)/2$ then $g_t(x,y)$ has an absolutely irreducible factor over $\mathbb{F}_p$.

\end{coro}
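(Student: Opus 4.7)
Since $t-1 \geq 3$ is an odd prime, $t$ is even; the existence of the multiplicative order $u := (t-2)/2$ of $p$ modulo $t-1$ implies $p \nmid t-1$, so we are in Case (A). The degenerate subcase $u = 1$ (i.e., $t = 4$) forces $p \equiv 1 \pmod 3$, whence $\gcd(p-1, t-1) = 3$ and Theorem~\ref{gordo1} applies. From now on I assume $t \geq 6$, so $u \geq 2$.

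The strategy is to apply Lemma~\ref{torero} via a careful analysis of the $\mathcal R_t$-equivalence classes on $\mu(t-1)\setminus\{1\}$. Since Frobenius $r \mapsto r^p$ has order $u$ on $\mu(t-1)$, the set $\mu(t-1)\setminus\{1\}$ of $t-2$ elements splits into exactly two Frobenius orbits $O_1, O_2$, each of size $u$. The map $\psi: r \mapsto (r-1)^{t-1}$ is Frobenius-equivariant, since $(r-1)^p = r^p - 1$ in characteristic $p$; hence the image $\psi(O_i) \subset \mathbb{F}_{p^u}^*$ is a single Frobenius orbit of some size $d_i$ dividing $u$, and $O_i$ splits into exactly $d_i$ equivalence classes of common size $u/d_i$. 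One also has $\psi(1/r) = -\psi(r)$, giving a compatible inversion on the set of classes.

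In the generic case $d_1 = d_2 = u$, we obtain $2u \geq 4$ equivalence classes each of size $1$, and Lemma~\ref{torero} yields the conclusion directly. More generally, when $d_1 = d_2 = d$ with $d \geq 2$ we obtain $2d \geq 4$ classes all of the same size $u/d$ and the lemma still applies. The mixed case $d_1 \neq d_2$ can only occur when $t-1 \equiv 1 \pmod 4$ (otherwise inversion swaps $O_1$ and $O_2$ and forces $d_1 = d_2$); if in addition both $d_i \geq 4$, Lemma~\ref{torero} applies separately to each size class. The remaining mixed subcases (with some $d_i \in \{1,2,3\}$) are handled by bounding the number of singular points of $\chi_t$ as $\sum_i |R_i|(|R_i|-1)$ and checking that it is strictly less than $(t-2)^2/4$, so that Theorem~\ref{te:MainTheoGeneralCase} applies; for instance, $d_1 = 1, d_2 = u$ gives only $(t-2)(t-4)/4 < (t-2)^2/4$ singular points.

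The main obstacle will be the pathological case $d_1 = d_2 = 1$, in which $\psi$ takes only the two values $\pm b \in \mathbb{F}_p^*$ and there are only two $\mathcal R_t$-classes, each of size $u$, attaining the maximum $(t-2)(t-4)/2$ singular points of Proposition~\ref{joselito}; this falls outside the reach of both Lemma~\ref{torero} and Theorem~\ref{te:MainTheoGeneralCase}. To rule it out, I plan to observe that $d_1 = d_2 = 1$ forces the divisibility $\Phi(X) \mid (X-1)^{2(t-1)} - b^2$ in $\mathbb{F}_p[X]$, where $\Phi(X) := (X^{t-1}-1)/(X-1)$, and to combine this with the factorization of $\Phi$ over $\mathbb{F}_p$ into exactly two irreducible factors of degree $u$ (dictated by the order hypothesis on $p$). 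The resulting rigid identity can be shown to be inconsistent with $t-1$ prime and $u = (t-2)/2$ being the exact order of $p$, completing the proof.
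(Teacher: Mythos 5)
Your overall framework (partitioning $\mu(t-1)\setminus\{1\}$ into ${\mathcal R}_t$-classes via the Frobenius-equivariant map $\psi(r)=(r-1)^{t-1}$ and feeding the resulting counts into Lemma \ref{torero} or Theorem \ref{te:MainTheoGeneralCase}) is in the spirit of the paper's Case (A) machinery, but the proof is not complete: the one case that carries all the difficulty --- your ``pathological case'' in which $\psi$ takes only the two values $\pm b$ and ${\mathcal R}_t$ has exactly two classes of size $(t-2)/2$ --- is dispatched with ``I plan to observe \dots\ can be shown to be inconsistent,'' which is a declaration of intent, not an argument. This case cannot be waved away: the remark following Lemma \ref{torero} exhibits exponents ($t=(p^\ell+1)/2$) for which ${\mathcal R}_t$ really does have exactly two classes and $\chi_t$ attains the maximal number $(t-2)(t-4)/2$ of nodes, so two-class configurations are genuinely outside the reach of both Lemma \ref{torero} and Theorem \ref{te:MainTheoGeneralCase}, and excluding them under your hypotheses is precisely the content of the corollary. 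The paper attacks exactly this point by a different mechanism: by Theorem \ref{banderillero} and Remark \ref{josetomas} one may assume some $b:=(r-1)^{t-1}$ lies in $\mathbb{F}_{p^2}$, and then a conjugacy argument over $\mathbb{F}_{p^2}$ (using that $t-1$ is prime together with the order hypothesis) is used to force \emph{all} of $\mu(t-1)\setminus\{1\}$ into a single class, giving $(t-2)(t-3)$ singular points and contradicting Proposition \ref{joselito}. Nothing playing this role appears in your write-up, and the ``rigid identity'' you gesture at is never produced or refuted.

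A secondary but real gap: your claim that each Frobenius orbit $O_i$ ``splits into exactly $d_i$ equivalence classes of common size $u/d_i$'' confuses the fibres of $\psi|_{O_i}$ with the ${\mathcal R}_t$-classes, which are the fibres of $\psi$ on all of $\mu(t-1)\setminus\{1\}$. When $\psi(O_1)$ meets $\psi(O_2)$ the classes merge across the two orbits; for instance in your ``generic case'' $d_1=d_2=u$ one may obtain $u$ classes of size $2$ rather than $2u$ singletons, and for $u=2$ or $3$ the hypothesis $\ell\geq 4$ of Lemma \ref{torero} then fails. (Your observation $\psi(1/r)=-\psi(r)$, together with whether $-1$ lies in the subgroup generated by $p$ modulo $t-1$, is the right tool to control this merging, but the bookkeeping is never carried through, and some of your listed subcases, e.g.\ $d_1=1$, $d_2=u$, are in fact vacuous in odd characteristic.) Both the routine cases and the crucial case therefore need repair before this can be considered a proof.
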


\begin{proof}
On the one hand, since $t-1$ is prime and, by \cite[Ex. 3.36]{LN1}, the $(t-1)$-th cyclotomic polynomial $Q_{t-1}$ is irreducible over $\mathbb{F}_{p^2}$, we have that all the elements in $\mu(t-1)\setminus \{1\}$ are conjugate over $\mathbb{F}_{p^2}$. On the other hand, by Theorem \ref{banderillero} and Remark \ref{josetomas}, we can assume that there exists $r\in \mu(t-1)\setminus \{1\}$ such that $b:=(r-1)^{t-1}\in \mathbb{F}_{p^2}$.
These two facts imply that the $t-2$ elements of $\mu(t-1)\setminus \{1\}$ are roots of the polynomial $(x-1)^{t-1}-b\in \mathbb{F}_{p^2}[x]$ and, therefore, the equivalence relation ${\mathcal R}_t$ has only one equivalence class. Then the number of singular points of $\chi_t$, that is, $\# \Omega_t$, is $(t-2)^2-(t-2)=(t-2)(t-3)$. This is a contradiction with Proposition \ref{joselito} because this number is strictly greater than $(t-2)(t-4)/2$.

\end{proof}


For each natural number $n$ denote by $K^{(n)}$ the cyclotomic extension of $\mathbb{F}_p$ given by the splitting field of the polynomial $x^n-1\in \mathbb{F}_p [x]$.

\begin{lemma}\label{lemagordo}
 Set $g_t(x,t)=h_1(x,y)h_2(x,y)\cdots h_{\ell}(x,y)$, where $h_i(x,y)\in \overline{\mathbb{F}}_p$ is an absolutely irreducible polynomial for each $i\in\{1,2,\ldots,\ell\}$, and consider the field $K_t:=K^{(t)}\cap K^{(t-1)}$. 

\begin{itemize}
\item[(a)] For each $i\in\{1,2,\ldots,\ell\}$, it holds that $h_i(x,y)=\gamma_i h'_i(x,y)$, where $h'_i(x,y)\in K_t[x,y]$ and $\gamma_i\in \overline{\mathbb{F}}_p$.

\item[(b)] For each positive divisor $d$ of $t$ such that $d>2$, there exists $i\in\{1,2,\ldots,\ell\}$ such that $h_i(x,y)=\alpha_1 h_{i,1}(x,y)$, where $h_{i,1}(x,y)\in K^{(d)}[x,y]$ and $\alpha_1\in \overline{\mathbb{F}}_p$.

\item[(c)] For each divisor $d>1$ of $t-1$ there exists $i\in\{1,2,\ldots,\ell\}$ such that $h_i(x,y)=\alpha_2 h_{i,2}(x,y)$, where $h_{i,2}(x,y)\in K^{(d)}[x,y]$ and $\alpha_2\in \overline{\mathbb{F}}_p$.
\end{itemize}
\end{lemma}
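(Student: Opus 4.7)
The plan is to prove (b) and (c) directly from Lemma \ref{le:exit2rationalPoints} by exhibiting, in each case, an explicit non-singular $K^{(d)}$-rational point on $\chi_t$, and then to deduce (a) by running the same argument at the two levels $K=K^{(t)}$ and $K=K^{(t-1)}$ and combining via Galois descent, using that $K_t=K^{(t)}\cap K^{(t-1)}$. Throughout I shall exploit the refinement of Lemma \ref{le:exit2rationalPoints} implicit in its proof: the absolutely irreducible $H$ produced there is necessarily the unique absolutely irreducible factor $h_j$ of $G_t$ passing through the given non-singular $K$-rational point $P$.

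For (b), given $d>2$ dividing $t$, pick distinct $r,s\in \mu(d)\setminus\{1\}$; this is possible because $p\nmid t$ implies $|\mu(d)|=d\geq 3$. The point $P_{r,s}:=\left(1/(r-1):1/(s-1):1\right)\in \mathbb{P}^{2}(K^{(d)})$ lies on $\chi_t$: the relations $r^t=s^t=1$ give $F_t(P_{r,s})=0$, and $r\neq s$ keeps $P_{r,s}$ off the line $X=Y$, so $G_t(P_{r,s})=0$. It is non-singular, since by the parametrization of the singular locus in Section \ref{caseAresults} every singular point corresponds to some $r'\in\mu(t-1)\setminus\{1\}$, whereas $r\in\mu(t)\setminus\{1\}$ together with $\gcd(t,t-1)=1$ precludes $r\in\mu(t-1)$. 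Lemma \ref{le:exit2rationalPoints} then produces an $h_j$ defined over $K^{(d)}$ up to scalar, proving (b). Part (c) is handled identically with the $K^{(d)}$-rational point $(\alpha:1:0)$ for $\alpha\in\mu(d)\setminus\{1\}$: the assumption $d\mid t-1$ gives $\alpha^{t-1}=1$, placing the point on $\chi_t$, and Lemma \ref{le:NoSingularCaseA} rules out singular points at infinity in case (A).

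For (a) the key claim is that \emph{every} $h_i$ contains at least one non-singular $K^{(t)}$-rational point and at least one non-singular $K^{(t-1)}$-rational point. For the former, fix any $r\in\mu(t)\setminus\{1\}$ and consider the $K^{(t)}$-defined line $L_r:(r-1)X-Z=0$; a direct calculation (using $r^t=1$ to annihilate $(X+Z)^t-X^t$ on $L_r$) shows $\chi_t\cap L_r=\{P_{r,s}:s\in\mu(t)\setminus\{1,r\}\}$, a set of $t-2$ non-singular points of simple, transversal intersection. By Bezout, $h_i\cap L_r$ consists of $\deg h_i\geq 1$ points counted with multiplicity, all lying in this set, so some $P_{r,s}$ lies on $h_i$. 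Analogously, $h_i\cap\{Z=0\}$ has $\deg h_i\geq 1$ points by Bezout, all among the $t-2$ non-singular $K^{(t-1)}$-rational points $(\alpha:1:0)$ with $\alpha\in\mu(t-1)\setminus\{1\}$ (using Lemma \ref{le:NoSingularCaseA}). Applying Lemma \ref{le:exit2rationalPoints} at both levels yields scalar representatives of $h_i$ in $K^{(t)}[X,Y,Z]$ and in $K^{(t-1)}[X,Y,Z]$.

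The final step is Galois descent: since $\algclosure/\mathbb{F}_p$ is procyclic and $[K_t:\mathbb{F}_p]=\gcd(e_t,e_{t-1})$, the closed subgroups $\mathrm{Gal}(\algclosure/K^{(t)})$ and $\mathrm{Gal}(\algclosure/K^{(t-1)})$ of $\mathrm{Gal}(\algclosure/\mathbb{F}_p)\simeq\widehat{\mathbb{Z}}$ together topologically generate $\mathrm{Gal}(\algclosure/K_t)$. Hence any projective hypersurface stabilized by both is stabilized by $\mathrm{Gal}(\algclosure/K_t)$, equivalently, a suitable scalar multiple of $h_i$ lies in $K_t[X,Y,Z]$, proving (a). I expect the main obstacle will be bookkeeping around this descent step, where one must distinguish ``$h_i\in K[X,Y,Z]$'' from ``$h_i$ defines a $K$-subvariety of $\mathbb{P}^2$'' (equivalently, $h_i\in K[X,Y,Z]$ up to scalar), but this is a standard matter of Galois descent for projective hypersurfaces.
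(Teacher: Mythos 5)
Your proof is correct, and parts (b) and (c) coincide with the paper's own argument (exhibit the non-singular $K^{(d)}$-rational points $(1/(r-1):1/(s-1):1)$, resp.\ $(\alpha:1:0)$, and invoke Lemma \ref{le:exit2rationalPoints}). For part (a), however, you take a genuinely different and noticeably more elementary route. The paper proves (a) through the Bertini machinery of Section \ref{backbertini}: it shows the pencils $\mathcal{P}(P,Q)$ and $\mathcal{P}(F_t,Z^{t-1})$ are irreducible via Corollary \ref{bbb}, identifies each $H_i$ as the unique (up to scalar) generator of a linear system $\mathcal{L}_{d_i}(\mathcal{C},\nu)$ attached to a cluster of $K^{(t)}$- (resp.\ $K^{(t-1)}$-) rational base points via Lemma \ref{ccc}, and then applies the rationality statement Lemma \ref{eee}; for the pencil $\mathcal{P}(F_t,Z^{t-1})$ this even forces a discussion of infinitely near base points along the line at infinity. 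You replace all of this with a direct incidence argument: since $L_r$ (resp.\ $Z=0$) is not a component of $\chi_t$ and meets it in exactly $t-2$ simple points, all non-singular and rational over $K^{(t)}$ (resp.\ over $K^{(t-1)}$, by Lemma \ref{le:NoSingularCaseA}), Bezout forces every absolutely irreducible factor $h_i$ to pass through at least one such point, and the strengthened form of Lemma \ref{le:exit2rationalPoints} (the unique absolutely irreducible factor through a non-singular $K$-rational point is proportional to a polynomial over $K$ --- which is indeed what its proof establishes) gives the $K^{(t)}$- and $K^{(t-1)}$-representations of $h_i$. Your checks that $(0:1:0)\notin\chi_t$ and $P_{r,r}\notin\chi_t$ (so that $\chi_t\cap L_r$ is exactly the $t-2$ points $P_{r,s}$, $s\in\mu(t)\setminus\{1,r\}$) are the points where care is needed, and they go through because $p\nmid t(t-1)$ in case (A). The final descent to $K_t=K^{(t)}\cap K^{(t-1)}$ is the same normalization-of-a-coefficient step the paper performs, merely rephrased in Galois-theoretic language. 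What the paper's approach buys is a uniform linear-systems framework (already set up for Lemma \ref{ccc} and used elsewhere); what yours buys is a shorter, self-contained proof of (a) that avoids Bertini's theorem and clusters of infinitely near points altogether.
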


\begin{proof}

Let us consider the pencil $\cp(P,Q)$, where $$P(X,Y,Z):=\frac{(X+Z)^t-X^t}{Z}\;\;\;\mbox{ and }\;\;\; Q(X,Y,Z):=\frac{(Y+Z)^t-Y^t}{Z}.$$
Notice that $P-Q=F_t$, where $F_t(X,Y,Z)$ is the homogeneization of the (affine) polynomial $f_t(x,y)$.  It is straightforward to show that the cluster of base points of $\cp(P,Q)$  is
$${\mathcal C}_1:=\left\{ \left(\frac{1}{r-1}:\frac{1}{s-1}:1\right) \mid r^t=s^t=1 \mbox{ and } r,s\not=1\right\}\subseteq \gp^2.$$
The curve defined by $P(X,Y,Z)=0$ (resp., $Q(X,Y,Z)=0$) is a union of lines which are transversal to those defined by $P(X,Y,Z)=0$ (resp., $Q(X,Y,Z)=0$). Therefore, the multiplicity of a general curve of the pencil at each one of the base points is $1$ and, by Corollary \ref{bbb}, $\cp(P,Q)$ is an irreducible pencil. 

Let $h_i(x,y)$ be any of the (absolutely) irreducible components of $g_t(x,y)$ and denote by $H_i(X,Y,Z)$ its homogenization. Let $d_i$ be the degree of $H_i$ and let $\nu:{\mathcal C}_1\rightarrow \mathbb{N}$ be the map such that, for each points $p\in {\mathcal C}_1$, $\nu(p)$ is equal to 1 if $H_i$ vanishes at $p$ and $0$ otherwise. 

We claim that the vector space ${\mathcal L}_{d_i}({\mathcal C}_1,{\bold{\nu}})$ is spanned by $H_i$. Indeed, it is obvious that $H_i\in {\mathcal L}_{d_i}({\mathcal C},\bold{\nu})$ and we shall reason by contradiction assuming that $\dim_{\overline{\mathbb{F}}_p} {\mathcal L}_{d_i}({\mathcal C}_1,\bold{\nu})\geq 2$. In this case, there exists $T(X,Y,Z)\in {\mathcal L}_{d_i}(\Gamma_1,\bold{\nu})$ such that $\{H_i,T\}$ is linearly independent over $\overline{\mathbb{F}}_p$. This contradicts the irreducibility of the pencil $\cp(P,Q)$ because $[\alpha H_i+\beta T](X-Y)\prod_{j\not=i} H_j$ belongs to  ${\mathcal L}_d({\mathcal C}_1,{\bold m})$ for any $(\alpha,\beta)\in \mathbb{P}^1$ (where $\bold{m}(p):=1$ for each base point $p$) and this space is equal to ${\mathcal P}(P,Q)$ by Lemma \ref{ccc}. Now, applying Lemma \ref{eee} to ${\mathcal L}_{d_i}({\mathcal C}_1,\bold{\nu})$, one has that $h_i(x,y)=\gamma_i' h'_i(x,y)$, where $h'_i(x,y)\in K^{(t)}[x,y]$ and $\gamma_i'\in \overline{\mathbb{F}}_p$.

Let us consider now the pencil ${\mathcal P}(F_t,Z^{t-1})$. The set of its base points is $\{s_r:=(r,1,0)\mid r^{t-1}=1\}$ and, localizing a general member of the pencil at any of the points $s_r$ and analyzing the evolution by blow-ups, it is easy to deduce that the cluster of base points of ${\mathcal P}(F_t,Z^t)$ is
${\mathcal C}_2:=\cup_r {\mathcal S}_r$,
where $r$ varies in the set of $(t-1)$-roots of unity of $\algclosure$ and 
$${\mathcal S}_r:=\{s_{r,1}:=s_r,s_{r,2},\ldots, s_{r,t-1}\},$$
$s_{r,i}$ being the intersection point of the exceptional divisor of the blow-up centered at $s_{r,i-1}$ and the strict transform of the line at infinity $Z=0$, $2\leq i\leq t-1$. Therefore any point of ${\mathcal C}_2$ is $K^{(t-1)}$-rational. Using a similar reasoning as above (but now considering the strict transforms at the points of ${\mathcal C}_2$ of the curves $H_i=0$ in the definition of the map $\nu$) it holds that, for any $i\in \{1,2,\ldots,\ell\}$, $h_i(x,y)=\gamma_i'' h''_i(x,y)$, where $h''_i(x,y)\in K^{(t-1)}[x,y]$ and $\gamma_i''\in \overline{\mathbb{F}}_p$.

Hence, we have deduced that, for each $i\in \{1,2,\ldots,\ell\}$, $$h_i(x,y)=\gamma_i'h'_i(x,y)=\gamma_i''h''_i(x,y),$$ where $h'_i(x,y)\in K^{(t)}[x,y]$ and $h''_i(x,y)\in K^{(t-1)}[x,y]$  and $\gamma_i',\gamma_i''\in \overline{\mathbb{F}}_p$. If $\alpha\in K^{(t)}$ is one of the coefficients of $h_i'$ it is clear from the above equalities that $\beta:=\frac{\gamma_i'}{\gamma_i''}\alpha\in K^{(t-1)}$. Taking $\hat{h}_{i}(x,y):=\frac{1}{\alpha} h'_i(x,y)\in K^{(t)}[x,y]$ one has that $\hat{h}_{i}(x,y)=\frac{1}{\beta} h''(x,y)\in K^{(t-1)}[x,y]$ and, therefore, $\hat{h}_i(x,y)\in K_t[x,y]$. Part (a) follows by observing that $h_i(x,y)=\gamma_i'\alpha \hat{h}_{i}(x,y)$.

To prove (b), observe that ${\mathcal C}_1$ contains non-singular $K^{(d)}$-rational points of the curve $\chi_t$ for any cyclotomic field $K^{(d)}$ with $d$ dividing $t$ and $d>2$ (specifically the points $(1/(r-1):1/(s-1):1)$, $r\not=1$ and $s\not=1$ being two \emph{distinct} $d$-roots of unity). Now the result follows by Lemma \ref{le:exit2rationalPoints}.

The proof of (c) is similar taking into account that ${\mathcal C}_2$ contains non-singular $K^{(d)}$-rational points of the curve $\chi_t$ for any cyclotomic field $K^{(d)}$ with $d$ dividing $t-1$ and $d>1$ (specifically the points $(r:1:0)$, $r\not=1$ being a $d$-root of unity).

\end{proof}

In the next theorem, for every positive integer $n$ that is not divisible by $p$, we shall denote by $e_n$ the degree of the cyclotomic extension $K^{(n)}/\mathbb{F}_p$, that is, the multiplicative order of $p$ in $\mathbb{Z}/n\mathbb{Z}$.

\begin{theo}\label{spiderman}
Set $e:=\gcd(e_{t-1},e_t)$ and assume that at least one of these conditions holds:
\begin{itemize}
\item[(1)] There exists a divisor $d>2$ of $t$ such that $\gcd(e,e_d)=1$.
\item[(2)] There exists a divisor $d>1$ of $t-1$ such that $\gcd(e,e_d)=1$.
\end{itemize}
Then the polynomial $g_t(x,y)$ has an absolutely irreducible factor in $\mathbb{F}_p[x,y]$. \end{theo}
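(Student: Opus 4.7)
The plan is to combine the three parts of Lemma \ref{lemagordo} with the standard description of cyclotomic subfields of $\overline{\mathbb{F}}_p$. The key identity is that $K^{(n)} = \mathbb{F}_{p^{e_n}}$ whenever $\gcd(n,p)=1$, so that $K^{(m)} \cap K^{(n)} = \mathbb{F}_{p^{\gcd(e_m,e_n)}}$. In particular $K_t = K^{(t)} \cap K^{(t-1)} = \mathbb{F}_{p^e}$, and $K_t \cap K^{(d)} = \mathbb{F}_{p^{\gcd(e,e_d)}}$ for any relevant divisor $d$.

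Suppose first that condition $(1)$ holds, so there exists a divisor $d > 2$ of $t$ with $\gcd(e,e_d)=1$. By Lemma \ref{lemagordo}(b), there exists some absolutely irreducible factor $h_i$ of $g_t(x,y)$ that, after rescaling by an element of $\overline{\mathbb{F}}_p$, has all coefficients in $K^{(d)}$. By Lemma \ref{lemagordo}(a), the same $h_i$ can instead be rescaled so that all its coefficients lie in $K_t$.

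The next step is to show that these two rescalings can be reconciled into a single normalization forcing the polynomial into $\mathbb{F}_p[x,y]$. Fix any monomial $x^a y^b$ appearing in $h_i$ with nonzero coefficient, and let $\tilde{h}_i$ be the unique scalar multiple of $h_i$ whose coefficient at $x^a y^b$ equals $1$. Since $\tilde{h}_i$ is proportional to the representative in part (a), and the coefficient at $x^a y^b$ in that representative lies in $K_t$, the proportionality constant lies in $K_t$ as well, whence $\tilde{h}_i \in K_t[x,y] = \mathbb{F}_{p^e}[x,y]$. Applying the identical argument with part (b) gives $\tilde{h}_i \in K^{(d)}[x,y] = \mathbb{F}_{p^{e_d}}[x,y]$. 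Therefore
\[
\tilde{h}_i \in \mathbb{F}_{p^e}[x,y] \cap \mathbb{F}_{p^{e_d}}[x,y] = \mathbb{F}_{p^{\gcd(e,e_d)}}[x,y] = \mathbb{F}_p[x,y].
\]
Since $\tilde{h}_i$ is a nonzero scalar multiple of $h_i$, it is absolutely irreducible and divides $g_t(x,y)$, yielding the desired absolutely irreducible factor over $\mathbb{F}_p$.

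If instead condition $(2)$ holds, one runs the identical argument, replacing the appeal to Lemma \ref{lemagordo}(b) with Lemma \ref{lemagordo}(c) applied to the divisor $d > 1$ of $t-1$; the same intersection computation $\mathbb{F}_{p^e} \cap \mathbb{F}_{p^{e_d}} = \mathbb{F}_p$ concludes the proof. The only delicate point in the whole argument is the bookkeeping around the scalars $\gamma_i$, $\alpha_1$, $\alpha_2$ from Lemma \ref{lemagordo} — one must verify that a single pointwise normalization (at a fixed monomial) simultaneously witnesses membership in both cyclotomic fields. Once this is established, the theorem reduces to the elementary fact about intersections of finite subfields of $\overline{\mathbb{F}}_p$.
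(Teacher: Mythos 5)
Your proposal is correct and follows essentially the same route as the paper: both arguments take the factor supplied by Lemma \ref{lemagordo}(b) (resp.\ (c)), use Lemma \ref{lemagordo}(a) on that same factor, reconcile the two scalings by normalizing at a nonzero coefficient, and conclude via $\mathbb{F}_{p^e}\cap\mathbb{F}_{p^{e_d}}=\mathbb{F}_{p^{\gcd(e,e_d)}}=\mathbb{F}_p$. The normalization at a fixed monomial is just a slightly more explicit version of the paper's division by a coefficient $\alpha$, and it correctly handles the scalar bookkeeping.
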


\begin{proof}
Let $K_t$ be as in Lemma \ref{lemagordo}. Then $e=|K_t:\mathbb{F}_p|$ and, therefore, $K_t=\mathbb{F}_{p^e}$. Suppose that there exists a divisor $d>2$ of $t$ such that $\gcd(e,e_d)=1$. This implies that $K^{(d)}\cap K_t=\mathbb{F}_{p^{e_d}}\cap \mathbb{F}_{p^e}=\mathbb{F}_p$. On the one hand, by Part (b) of Lemma \ref{lemagordo}, there exists an absolutely irreducible factor $h(x,y)$ of $g_t(x,y)$ such that $h(x,y)\in K^{(d)}[x,y]$. On the other hand, by Part (a) of Lemma \ref{lemagordo}, $\beta h(x,y)\in K_t[x,y]$ for some $\beta\in \algclosure$. Take a coefficient $\alpha$ of $h(x,y)$ and define $\hat{h}(x,y):=\frac{1}{\alpha}h(x,y)\in K^{(d)}[x,y]$. Then $\hat{h}(x,y)=\frac{1}{\alpha \beta} \beta h(x,y)\in K_t[x,y]$ because $\beta \alpha\in K_t$ and, therefore, $\hat{h}(x,y)\in \mathbb{F}_p[x,y]$. Hence, $\hat{h}(x,y)$ is an absolutely irreducible factor of $g_t(x,y)$ in $\mathbb{F}_p[x,y]$. The reasoning is similar assuming that condition (2) holds.

\end{proof}

Assuming the irreducibility of $g_t(x,y)$ in $\mathbb{F}_p$ it is possible to relax the hypotheses involving the numbers $e_d$ (associated with the divisors  of $t$ and $t-1$) given in Theorem \ref{spiderman}. This is shown in the next lemma and theorem.

\begin{lemma}\label{newlemma}
Assume that $g_t(x,y)$ is irreducible over $\mathbb{F}_p$. Let $K$ be an extension of $\mathbb{F}_p$ such that there exists a non-singular point $P\in \chi_t(K)$. Then the absolutely irreducible factors of $g_t(x,y)$ have coefficients in $K$. 

\end{lemma}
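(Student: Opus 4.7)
The plan is to combine the two earlier descent results---Lemma \ref{IfIrreducibleEqualDegreeFactors} (which describes how an $\mathbb{F}_p$-irreducible polynomial splits over $\overline{\mathbb{F}}_p$) and Lemma \ref{le:exit2rationalPoints} (which produces a $K$-defined absolutely irreducible factor from a non-singular $K$-point)---with a short comparison of Galois stabilisers. First I apply Lemma \ref{IfIrreducibleEqualDegreeFactors} with $q=p$ to the $\mathbb{F}_p$-irreducible polynomial $g_t(x,y)$, obtaining a divisor $r$ of $\deg(g_t)$ and an absolutely irreducible $m(x,y)\in\mathbb{F}_{p^r}[x,y]$ with
$$
g_t(x,y)=d\prod_{\sigma\in G'}\sigma\bigl(m(x,y)\bigr),\qquad G':=\mathrm{Gal}(\mathbb{F}_{p^r}/\mathbb{F}_p),\ d\in\mathbb{F}_p.
$$
Since $\mathbb{F}_p$ is perfect, $g_t$ has no repeated absolutely irreducible factor, so the $r$ polynomials $\sigma(m)$ are pairwise non-associate and form, up to nonzero scalars, the complete list of absolutely irreducible factors of $g_t$.

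Next I invoke Lemma \ref{le:exit2rationalPoints} at the non-singular point $P\in\chi_t(K)$ to obtain an absolutely irreducible $H(X,Y,Z)\in K[X,Y,Z]$ dividing $G_t$; dehomogenising gives an absolutely irreducible factor $h(x,y)\in K[x,y]$ of $g_t$. By uniqueness of absolutely irreducible factorisation, $h=\lambda\,\sigma_0(m)$ for some $\sigma_0\in G'$ and some $\lambda\in\overline{\mathbb{F}}_p^{*}$. The whole claim then reduces to showing $\mathbb{F}_{p^r}\subseteq K$, because once this holds every $\sigma(m)$ lies in $\mathbb{F}_{p^r}[x,y]\subseteq K[x,y]$.

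To finish I would work with principal ideals rather than polynomials, in order to ignore the scalar $\lambda$. The group $\mathrm{Gal}(\overline{\mathbb{F}}_p/\mathbb{F}_p)$ acts transitively on the $r$ ideals $(\sigma(m))$, $\sigma\in G'$ (because $g_t$ is $\mathbb{F}_p$-irreducible), and the stabiliser of any of them is precisely $\mathrm{Gal}(\overline{\mathbb{F}}_p/\mathbb{F}_{p^r})$. The ideal $(h)=(\sigma_0(m))$ is defined over $K$ and hence fixed by the whole subgroup $\mathrm{Gal}(\overline{\mathbb{F}}_p/K)$, giving the inclusion $\mathrm{Gal}(\overline{\mathbb{F}}_p/K)\subseteq\mathrm{Gal}(\overline{\mathbb{F}}_p/\mathbb{F}_{p^r})$, equivalently $\mathbb{F}_{p^r}\subseteq K$. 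The only delicate step is the scalar ambiguity between $h$ and $\sigma_0(m)$---it is not automatic from $\lambda\,\sigma_0(m)\in K[x,y]$ that $\sigma_0(m)\in K[x,y]$---and passing to the principal ideals (equivalently, to the projective classes of these polynomials) is the cleanest way to bypass it.
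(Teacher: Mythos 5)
Your argument is correct, but it descends to $\mathbb{F}_p$ by a different mechanism than the paper. The paper factors $G_t$ directly into $K$-irreducible polynomials (which, by Lemma \ref{IfIrreducibleEqualDegreeFactors} applied to the $\mathbb{F}_p$-irreducible $g_t$, are all conjugate under $\mathrm{Gal}(K/\mathbb{F}_p)$), singles out the factor $H$ vanishing at $P$, and shows $H$ is absolutely irreducible by the same non-singularity trick that underlies Lemma \ref{le:exit2rationalPoints}: if $H$ split further into $\mathrm{Gal}(K'/K)$-conjugates, each conjugate would vanish at the $K$-rational point $P$, forcing $m_P\geq 2$. Since the remaining $K$-irreducible factors are Galois conjugates of $H$, they are absolutely irreducible too, and the $K$-factorization is already the absolute one. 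You instead work with the minimal field of definition $\mathbb{F}_{p^r}$ of the absolutely irreducible factors, import Lemma \ref{le:exit2rationalPoints} as a black box to get one factor over $K$, and run an orbit--stabilizer argument on the ideals $(\sigma(m))$ to force $\mathbb{F}_{p^r}\subseteq K$. This is valid, and passing to principal ideals is indeed the right way to kill the scalar ambiguity; but note that your stabilizer computation silently uses that the $r$ conjugates $\sigma(m)$ are pairwise non-associate (otherwise the orbit is shorter than $r$ and the stabilizer strictly contains $\mathrm{Gal}(\overline{\mathbb{F}}_p/\mathbb{F}_{p^r})$, so you could only conclude that a proper subfield of $\mathbb{F}_{p^r}$ lies in $K$). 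Your appeal to perfectness of $\mathbb{F}_p$ does settle this --- an $\mathbb{F}_p$-irreducible polynomial is geometrically reduced, so $g_t$ has no repeated absolutely irreducible factor and the orbit has full length $r$ --- but this is an extra input the paper's route never needs, and it deserves a sentence of justification rather than a clause. The trade-off: your version is slightly longer and leans on that geometric-reducedness fact, but it yields the marginally stronger conclusion that the full minimal field of definition $\mathbb{F}_{p^r}$ is contained in $K$, whereas the paper's version is shorter and entirely self-contained.
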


\begin{proof}
Applying Lemma \ref{IfIrreducibleEqualDegreeFactors} we have that there exists a $K$-irreducible polynomial $H(X,Y,Z)$ such that the factorization of $G_t(X,Y,Z)$ into $K$-irreducible polynomials is
$$
G_t(X,Y,Z)=c\prod_{\sigma\in G}\sigma(H(X,Y,Z)),
$$
where $G=Gal(K/\mathbb{F}_{q})$ and $c\in \mathbb{F}_{p}$. We can assume, without loss of generality, that $H(P)=0$.

We claim that $H(X,Y,Z)$ is absolutely irreducible. Indeed, reasoning by contradiction we have that, if we assume that $H(X,Y,Z)$ is not absolutely irreducible and applying again Lemma \ref{IfIrreducibleEqualDegreeFactors} to $H$, it holds that $H(X,Y,Z)$ factorizes into $n\geq 2$ absolutely irreducible polynomials which are conjugate over $Gal(K'/K)$ for some extension $K'$ of $K$. Since $P$ is a $K$-rational point, this implies that any of these $n$ factors vanishes at $P$ and, therefore, the multiplicity of $H$ at $P$ must be greater than 2. This is a contradiction because $P$ is a non-singular point of $\chi_t$.

Therefore the above given factorization of $G_t(X,Y,Z)$ is, in fact, its factorization into absolutely irreducible polynomials. The result follows taking affine coordinates.

\end{proof}

\begin{theo}
Assume that $g_t(x,y)$ is irreducible over $\mathbb{F}_p$ and consider the set
$$E:=\{d\in \mathbb{N}\mid d>2 \mbox{ and} \mbox{ $d$ divides either $t$ or $t-1$}\}.$$
If $gcd(e_d\mid d\in E)=1$ then $g_t(x,y)$ is absolutely irreducible.

\end{theo}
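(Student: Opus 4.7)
My plan is to harvest, for each $d\in E$, an explicit non-singular $K^{(d)}$-rational point of $\chi_t$, apply Lemma \ref{newlemma} once for each such point, and then intersect the resulting fields of definition to descend the factors all the way down to $\mathbb{F}_p$.

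For the first step, the rational points I need were already constructed (incidentally) in the proof of Lemma \ref{lemagordo}. If $d\in E$ divides $t$, then, since $d>2$, there exist two distinct $d$-th roots of unity $r,s\in K^{(d)}$ with $r\not=1$ and $s\not=1$; the point $(1/(r-1):1/(s-1):1)$ lies on $\chi_t$ because $r^t=s^t=1$ kills both $(x+1)^t-x^t$ and $(y+1)^t-y^t$, and it is automatically non-singular, since a singular point would force $r^{t-1}=1$ and hence $r=r^t/r^{t-1}=1$, a contradiction. If instead $d\in E$ divides $t-1$, then any $d$-th root of unity $r\not=1$ yields the $K^{(d)}$-rational point $(r:1:0)$ on the line at infinity of $\chi_t$, which is automatically non-singular because, by Lemma \ref{le:NoSingularCaseA}, $\chi_t$ has no singular points at infinity in Case (A). Either way, for each $d\in E$ the curve $\chi_t$ has a non-singular $K^{(d)}$-rational point.

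Applying Lemma \ref{newlemma} with $K=K^{(d)}$ to each such point, every absolutely irreducible factor of $g_t(x,y)$ must have coefficients in $K^{(d)}=\mathbb{F}_{p^{e_d}}$. Intersecting these constraints as $d$ ranges over $E$, the coefficients of any absolutely irreducible factor lie in
\[
\bigcap_{d\in E}\mathbb{F}_{p^{e_d}}=\mathbb{F}_{p^{\gcd(e_d\mid d\in E)}}=\mathbb{F}_p,
\]
where the last equality is exactly the hypothesis. Hence every absolutely irreducible factor of $g_t(x,y)$ is defined over $\mathbb{F}_p$, and together with the standing assumption that $g_t(x,y)$ is irreducible over $\mathbb{F}_p$, this forces $g_t(x,y)$ itself to be absolutely irreducible.

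I do not expect any real obstacle; the result is essentially a clean corollary of Lemma \ref{newlemma} once one observes that the previous proofs have already supplied a $K^{(d)}$-rational non-singular point for every $d\in E$. The only mild bookkeeping issue is making the Galois-theoretic identity $\bigcap_{d\in E}\mathbb{F}_{p^{e_d}}=\mathbb{F}_{p^{\gcd(e_d\mid d\in E)}}$ explicit, which is a standard fact about towers of finite fields.
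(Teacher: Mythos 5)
Your proof is correct and follows essentially the same route as the paper's: exhibit, for each $d\in E$, a non-singular $K^{(d)}$-rational point of $\chi_t$ (an affine point $(1/(r-1):1/(s-1):1)$ built from $d$-th roots of unity when $d\mid t$, and $(r:1:0)$ at infinity when $d\mid t-1$), apply Lemma \ref{newlemma} to each, and intersect the fields $\mathbb{F}_{p^{e_d}}$ using the $\gcd$ hypothesis. The only difference is cosmetic (the paper fixes the specific pair $r=-1$, $s=\eta$ a primitive $d$-th root of unity, while you take any two distinct nontrivial $d$-th roots), and your justification of non-singularity is if anything slightly more explicit than the paper's.
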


\begin{proof}
Let $d\in E$ and assume first that $d$ is a divisor of $t$. Let $\eta$ be a primitive $d$-root of unity. Since $d>2$ one has that $\eta\not\not\in \{1,-1\}$ and, then, it is clear that the point $(-2^{-1}:\frac{1}{\eta-1}:1)\in \mathbb{P}^2$ is a non-singular point of $\chi_t(\mathbb{F}_{e_d})$.

Assume now that $d$ divides $t-1$ and let $\delta$ be a primitive $d$-root of unity. Since $d>2$ we have that $\delta\not=1$ and, then, $(\delta:1:0)$ is a non-singular point of $\chi_t(\mathbb{F}_{e_d})$.

By Lemma \ref{newlemma}, the absolutely irreducible factors of $g_t(x,y)$ have coefficients in the intersection of all the fields $\mathbb{F}_{e_d}$ for $d\in E$, that is $\mathbb{F}_p$ taking into account our assumptions.

\end{proof}




\end{document}